\documentclass[11pt]{article} 
\usepackage{amsfonts,amsmath,latexsym,amssymb,mathrsfs,amsthm,comment}
\usepackage{graphicx}
\usepackage{xcolor}
\usepackage{booktabs}

\usepackage[colorlinks=true, linkcolor=black, citecolor=black]{hyperref}

\numberwithin{equation}{section}

\let\OLDthebibliography\thebibliography
\renewcommand\thebibliography[1]{
  \OLDthebibliography{#1}
  \setlength{\parskip}{1pt}
  \setlength{\itemsep}{0pt plus 0.0ex}
}

\def\numberlikeadb{\global\def\theequation{\thesection.\arabic{equation}}}
\numberlikeadb
\newtheorem{theorem}{Theorem}[section]
\newtheorem{lemma}[theorem]{Lemma}
\newtheorem{corollary}[theorem]{Corollary}

\newtheorem{proposition}[theorem]{Proposition}
\newtheorem{remark}[theorem]{Remark}

\usepackage{lscape}
\usepackage{caption}
\usepackage{multirow}
\allowdisplaybreaks
\begin{document}

\title{The non-central gamma sum and difference distributions: exact distribution and asymptotic expansions
}
\author{Robert E. Gaunt\footnote{Department of Mathematics, The University of Manchester, Oxford Road, Manchester M13 9PL, UK, robert.gaunt@manchester.ac.uk; heather.sutcliffe@manchester.ac.uk}\:\, and Heather L. Sutcliff$\mathrm{e}^{*}$}

\date{} 
\maketitle

\vspace{-5mm}

\begin{abstract} Exact formulas are derived for the probability density functions of the sum and difference of two independent non-central gamma distributed random variables, with both series and integral representations of the density presented. These formulas are then applied to obtain asymptotic expansions for the probability density function, tail probabilities and quantile functions of these distributions. As a special case, we deduce asymptotic expansions for the probability density function of the product of correlated normal random variables with the coefficients given in closed-form. Numerical results are presented to assess the accuracy of our asymptotic approximations across a range of parameter constellations.
\end{abstract}

\noindent{{\bf{Keywords:}}} Non-central gamma distribution; linear combination of random variables; probability density function; tail probability; quantile function; asymptotic expansion; product of correlated normal random variables

\noindent{{{\bf{AMS 2020 Subject Classification:}}} Primary 41A60; 60E05; 62E15; Secondary 33C10; 33C15; 33C45}

\section{Introduction}

Following \cite{m93}, we say that the random variable $X$ follows the \emph{non-central gamma} distribution with parameters $\alpha,\beta>0$ and $\lambda\geq0$ if its characteristic function is given by
\begin{align}\label{cf}
\phi_X(t)=\mathbb{E}[\mathrm{e}^{\mathrm{i}tX}]=(1-\mathrm{i}\beta t)^{-\alpha}\exp\bigg(\frac{\mathrm{i}\beta\lambda t}{1-\mathrm{i}\beta t}\bigg), \quad t\in\mathbb{R},   
\end{align}
and we write $X\sim \Gamma(\alpha,\beta,\lambda)$. Here $\alpha$ is a shape parameter, $\beta$ is a rate parameter and $\lambda$ is a non-centrality parameter. When $\lambda=0$, the non-central gamma distribution reduces to the classical gamma distribution, whilst when  $\beta=2$ we recover the non-central chi-square distribution with $2\alpha$ degrees of freedom and non-centrality parameter $2\lambda$ (see \cite[p.\ 436]{jkb95}). The probability density function (PDF) of $X\sim \Gamma(\alpha,\beta,\lambda)$ can be expressed as the infinite series
\begin{align}
f_X(x)=\sum_{k=0}^\infty p_\lambda(k)f_{Y_k}(x),    \label{mix}
\end{align}
where $p_\lambda(k)=\mathrm{e}^{-\lambda}\lambda^k/k!$, $k\geq0$, is the probability mass function of the Poisson distribution with mean $\lambda$ and $Y_k\sim \Gamma(\alpha+k,\beta)$, $k\geq0$, are independent gamma random variables with density $f_{Y_k}(x)=\beta^{-\alpha-k}x^{\alpha+k-1}\mathrm{e}^{-x/\beta}/\Gamma(\alpha+k)$, $x>0$ (see \cite{kb96,p49}). Thus, the non-central gamma distribution can be seen to arise as a Poisson-weighted mixture of gamma distributions. On recalling the power series representation of the modified Bessel function of the first kind $I_\nu(x)$ (see \cite[Eq.\ 10.25.2]{olver}) it can be seen that the sum (\ref{mix}) can be evaluated in closed-form:
\begin{align}\label{pdfncg}
f_X(x)=\frac{1}{\beta}\mathrm{e}^{-\lambda-x/\beta}\bigg(\frac{x}{\beta \lambda}\bigg)^{(\alpha-1)/2}I_{\alpha-1}\bigg(2\sqrt{\frac{\lambda x}{\beta}}\bigg), \quad x>0;  
\end{align}
see also \cite[Theorem 1.3.4]{m05} for a similar closed-form representation for the PDF of the non-central chi-square distribution.

Sums and differences of independent non-central gamma random variables arise in numerous settings throughout the mathematical sciences. For instance, quadratic forms in normal random variables can be expressed as linear combinations of independent non-central chi-square random variables (see, for example, \cite{s63}). The problem of obtaining the exact distribution of such random variables has received much interest in the statistics literature (see, for example, \cite{cl05a,cl05,g55,i61,k67a,k67,p66,s63,sk61}) and applications have been found in diverse areas, such as radio propagation \cite{l23}, counting string vacua \cite{b13} and uncertainty quantification of turbulent flows \cite{m18}, and some of the numerous applications in mathematical statistics are compiled in \cite[Chapter 7]{mp92}. 

The sum and difference of two independent central and non-central gamma random variables, which we will refer to as the (non-central) gamma sum and difference distributions, have also received interest in the literature. In the case of a common shape parameter $\alpha$, the (central) gamma sum and difference distributions were shown by \cite{ha04} to follow the McKay Type I and II distributions \cite{m32}, respectively, and this result was applied to a problem in fading channel theory. The exact distributional theory of the McKay Type I distribution has received recent interest \cite{pog1,pog2}, whilst the McKay Type II distribution is also referred to as the variance-gamma distribution \cite{vg review}, which is widely used in financial modelling \cite{madan}. Again in the case of a common shape parameter (degrees of freedom), the difference of two independent non-central chi-square random variables is equal in distribution to the product of correlated normal random variables, and more generally the sum of independent copies of such random variables \cite{g25,j26}. These distributions have received attention in the statistics literature since the work of \cite{c32,w32} in the 1930's, and have numerous applications throughout the applied sciences, some of which are listed in \cite{g22,np16}. A review of the (central) gamma difference distribution is given in \cite{k15}, and recent contributions that consider different shape parameters include \cite{f23,h21}. 

In this paper, we study in detail the non-central gamma sum and difference distributions, that is the distributions of the random variables $X_1+X_2$ and $X_1-X_2$, where $X_1\sim \Gamma(\alpha_1,\beta_1,\lambda_1)$ and $X_2\sim \Gamma(\alpha_2,\beta_2,\lambda_2)$ are independent non-central gamma random variables. To fix notation, we shall say that $Y\sim \mathrm{NCGS}(\alpha_1,\alpha_2,\beta_1,\beta_2,\lambda_1,\lambda_2)$ if $Y=_d X_1+X_2$ (where $=_d$ denotes equality in distribution) and say that $Z\sim \mathrm{NCGD}(\alpha_1,\alpha_2,\beta_1,\beta_2,\lambda_1,\lambda_2)$ if $Z=_d X_1-X_2$. Here, in a manner that is consistent with the notation of \cite{j26} for the weighted non-central chi-square difference distribution, we have written NCGS for ``Non-Central Gamma Sum" distribution and NCGD for ``Non-Central Gamma Difference" distribution. 

In Section \ref{sec2}, we provide exact expressions for the PDF of the non-central gamma sum and difference distributions, providing both series representations (Section \ref{sec2.1}) and integral representations (Section \ref{sec2.2}). For the general case $\alpha_i,\beta_i>0$ and $\lambda_i\geq0$ for $i=1,2$,  our series representations for the PDF of $X_1-X_2$ and $X_1+X_2$ take the form of double infinite series expressed in terms of confluent hypergeometric functions (Theorem \ref{thm1.1}). A number of simplifications for specific parameter values are given in Remark \ref{zero}, Proposition \ref{prop1} and Theorem \ref{thm2.3}, and in some cases we recover known formulas from the literature. As mentioned earlier, exact (but rather involved) formulas for the PDF of linear combinations of non-central chi-square random variables are already available in the literature; for example, infinite series representations expressed in terms of generalized Laguerre polynomials are given in \cite{cl05} and references therein. In the case of a weighted sum of two non-central chi-square random variables, our formulas for the PDF offer a simpler alternative to these formulas, which, typically involve more complicated coefficients and require an additional parameter to be selected to ensure convergence of the series. The simpler form offered by our formulas allows one to more easily gain insight into the non-central gamma sum and difference distributions, as is exhibited in Section \ref{sec3}.

Since our exact formulas for the PDF of the non-central gamma sum and difference distributions (and other exact formulas in the case of linear combinations of two non-central chi-square random variables) take a rather complicated form, it is of interest to study the asymptotic behaviour of these distributions. This is the subject of Section \ref{sec3}, and our work can be seen to complement other studies, such as \cite{gz25,seg0,seg1,seg2,t93,quantilepaper,erlang}, that concern the asymptotic behaviour of important probability distributions. We begin in Propositions \ref{prop2.4} and \ref{prop2.5} by studying the asymptotic behaviour of the PDFs $f_{X_1-X_2}(x)$ and $f_{X_1+X_2}(x)$ of the difference $X_1-X_2$ and the sum $X_1+X_2$ as $x\rightarrow0$. This study leads to insight into the asymptotic behaviour of the distributions at the origin and also allows us to infer results concerning unimodality and boundedness of the distributions. 

In Theorem \ref{thm3.1}, we move on to study the asymptotic behaviour of the PDFs as $x\rightarrow\infty$ (and also as $x\rightarrow-\infty$ in the case of the difference distribution), obtaining the entire asymptotic expansion. Here the convenient form of our series representations (\ref{diff}) and (\ref{sum}) for the PDFs of $X_1-X_2$ and $X_1+X_2$ allows for the entire asymptotic expansion to be derived with simple explicit formulas given for the coefficients, expressed in terms of generalized Laguerre polynomials. 
From the expansions of Theorem \ref{thm3.1}, we deduce the entire asymptotic expansions for the tail probabilities of the non-central gamma sum and difference distributions for large $x$ (Theorem \ref{thm3.5}), and in turn we apply these expansions to deduce asymptotic approximations for the quantile function of these distributions (Theorem \ref{thm3.7}). 

Given that a special case of the non-central gamma difference distribution is the product of correlated normal random variables, and more generally the sum of independent copies of such random variables, an important special case of our results concerns these distributions. In Corollary \ref{corpn}, we provide asymptotic expansions for the PDF of the sum of independent copies of the product of correlated normal random variables as a special case of Theorem \ref{thm3.1}. Corollary \ref{corpn} offers a refinement of Theorem 3.1 of \cite{gz25}, in which the entire asymptotic expansion was derived for the PDF as $x\rightarrow\pm\infty$; however, the coefficients were more difficult to compute as they were given in terms of coefficients of a Puiseux series of a certain function of two variables. On the other hand, our coefficients are fully explicit and simple to compute. In Corollary \ref{corpn2}, we provide simpler formulas still for the product of correlated normal random variables, with coefficients expressed in terms of Hermite polynomials.

The asymptotic approximations given in Section \ref{sec3} for the PDF, tail probabilities and quantile function of the non-central gamma sum and difference distributions are very simple to implement and computationally efficient. In Section \ref{sec4}, we present numerical results that demonstrate that even using just the first few terms in the asymptotic expansions offer reasonable accuracy across a range of parameter constellations, and that retaining further terms in the asymptotic expansion for the PDF can lead to highly accurate approximations.





\section{Exact formulas for the density}\label{sec2}

\subsection{Series representations for the density}\label{sec2.1}

Let $X_1\sim \Gamma(\alpha_1,\beta_1,\lambda_1)$ and $X_2\sim \Gamma(\alpha_2,\beta_2,\lambda_2)$ be independent non-central gamma random variables, where $\alpha_i,\beta_i>0$ and $\lambda_i\geq0$ for $i=1,2$. In the following theorem, we provide series representations for the PDF of $X_1-X_2$ and $X_1+X_2$, the difference and the sum of independent non-central gamma random variables. The proof is given in Section \ref{sec5}, as are all other proofs in this paper. The formulas given in Theorem \ref{thm1.1} are valid for all $\alpha_i,\beta_i>0$ and $\lambda_i\geq0$ for $i=1,2$. The formulas are expressed in terms of the confluent hypergeometric function of the first kind $M(a,b,x)$ and the confluent hypergeometric function of the second kind $U(a,b,x)$; see \cite[Chapter 13]{olver} for definitions and standard properties. In order to state our formulas, we introduce the following notation. We let $b(x)=\beta_1$ and $a_{j,k}(x)=\alpha_1+k-j$ for $x>0$, and $b(x)=\beta_2$ and $a_{j,k}(x)=\alpha_2+j$ for $x<0$. 

\begin{theorem}\label{thm1.1} For $x\in\mathbb{R}$,
\begin{align}
		f_{X_1-X_2}(x)&=\beta_1^{-\alpha_1}\beta_2^{-\alpha_2}\mathrm{e}^{-\lambda_1-\lambda_2-|x|/b(x)}\sum_{k=0}^{\infty}\sum_{j=0}^{k}\frac{1}{k!\Gamma(a_{j,k}(x))}\binom{k}{j}\bigg(\frac{\lambda_1}{\beta_1}\bigg)^k\bigg(\frac{\lambda_2\beta_1}{\lambda_1\beta_2}\bigg)^j \nonumber\\
		&\quad\times\bigg(\frac{1}{\beta_1}+\frac{1}{\beta_2}\bigg)^{1-\alpha_1-\alpha_2-k}U\bigg(1-a_{j,k}(x),2-\alpha_1-\alpha_2-k,\bigg(\frac{1}{\beta_1}+\frac{1}{\beta_2}\bigg)|x|\bigg), \label{diff}
	\end{align}
and, for $x>0$,
\begin{align}
	f_{X_1+X_2}(x)&=\beta_1^{-\alpha_1}\beta_2^{-\alpha_2}\mathrm{e}^{-\lambda_1-\lambda_2-x/\beta_1}x^{\alpha_1+\alpha_2-1}\sum_{k=0}^{\infty}\frac{1}{k!\Gamma(\alpha_1+\alpha_2+k)}\bigg(\frac{\lambda_1x}{\beta_1}\bigg)^k\nonumber\\
	&\quad\times \sum_{j=0}^{k}\binom{k}{j}\bigg(\frac{\lambda_2\beta_1}{\lambda_1\beta_2}\bigg)^j M\bigg(\alpha_2+j,\alpha_1+\alpha_2+k,\bigg(\frac{1}{\beta_1}-\frac{1}{\beta_2}\bigg)x\bigg).\label{sum}
\end{align}
\end{theorem}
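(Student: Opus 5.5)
Use the Poisson mixture representation (\ref{mix}) for each of $X_1$ and $X_2$. Conditionally on independent Poisson indices $K_1\sim\mathrm{Poisson}(\lambda_1)$ and $K_2\sim\mathrm{Poisson}(\lambda_2)$, we have $X_1=_d Y^{(1)}_{K_1}\sim\Gamma(\alpha_1+K_1,\beta_1)$ and $X_2=_d Y^{(2)}_{K_2}\sim\Gamma(\alpha_2+K_2,\beta_2)$, independent. Hence
\[
f_{X_1\pm X_2}(x)=\sum_{m=0}^\infty\sum_{n=0}^\infty p_{\lambda_1}(m)p_{\lambda_2}(n)\,g_{m,n}^{\pm}(x),
\]
where $g^\pm_{m,n}$ is the density of the sum or difference of central gamma variables with shapes $\alpha_1+m$ and $\alpha_2+n$. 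Interchanging sum and integral is justified by Tonelli, since all terms are nonnegative. Reindex by $k=m+n$ and $j=n$, so that $m=k-j$. Then
\[
p_{\lambda_1}(k-j)p_{\lambda_2}(j)=\mathrm{e}^{-\lambda_1-\lambda_2}\frac{1}{k!}\binom{k}{j}\lambda_1^{k}\Big(\frac{\lambda_2}{\lambda_1}\Big)^j,
\]
which produces the $\binom{k}{j}$ and power factors appearing in (\ref{diff}) and (\ref{sum}). The powers of $\beta_i$ are absorbed from the gamma normalising constants $\beta_1^{-\alpha_1-k+j}\beta_2^{-\alpha_2-j}=\beta_1^{-\alpha_1}\beta_2^{-\alpha_2}\beta_1^{-k}(\beta_1/\beta_2)^j$.

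For the sum, the central gamma sum density is
\[
\frac{\beta_1^{-a}\beta_2^{-b}}{\Gamma(a)\Gamma(b)}\int_0^x (x-y)^{a-1}y^{b-1}\mathrm{e}^{-(x-y)/\beta_1-y/\beta_2}\,\mathrm{d}y,
\]
with $a=\alpha_1+k-j$ and $b=\alpha_2+j$. Substituting $y=xt$ gives $x^{a+b-1}\mathrm{e}^{-x/\beta_1}\int_0^1 t^{b-1}(1-t)^{a-1}\mathrm{e}^{(1/\beta_1-1/\beta_2)xt}\,\mathrm{d}t$. The standard Euler integral $M(b,a+b,z)=\frac{\Gamma(a+b)}{\Gamma(a)\Gamma(b)}\int_0^1 \mathrm{e}^{zt}t^{b-1}(1-t)^{a-1}\,\mathrm{d}t$ then turns the $\Gamma(a)\Gamma(b)$ into $\Gamma(\alpha_1+\alpha_2+k)$. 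Since $a+b=\alpha_1+\alpha_2+k$ depends only on $k$, the factor $x^{k}\lambda_1^k\beta_1^{-k}/(k!\,\Gamma(\alpha_1+\alpha_2+k))$ can be pulled outside the inner $j$-sum, which gives (\ref{sum}) directly.

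For the difference with $x>0$, the density of $Y_1-Y_2$ is
\[
\frac{\beta_1^{-a}\beta_2^{-b}}{\Gamma(a)\Gamma(b)}\int_0^\infty (x+y)^{a-1}y^{b-1}\mathrm{e}^{-(x+y)/\beta_1-y/\beta_2}\,\mathrm{d}y .
\]
Substituting $y=xs$ and using the Tricomi integral $U(b,b+a,z)=\frac{1}{\Gamma(b)}\int_0^\infty \mathrm{e}^{-zs}s^{b-1}(1+s)^{a-1}\,\mathrm{d}s$ with $z=(1/\beta_1+1/\beta_2)x$, the $\Gamma(b)$ cancels and we obtain
\[
\frac{\beta_1^{-a}\beta_2^{-b}}{\Gamma(a)}x^{a+b-1}\mathrm{e}^{-x/\beta_1}U(b,a+b,cx),\qquad c=\frac{1}{\beta_1}+\frac{1}{\beta_2}.
\]
Kummer's transformation $U(b,a+b,z)=z^{1-a-b}U(1-a,2-a-b,z)$ then removes the power $x^{a+b-1}$ and leaves the factor $c^{1-\alpha_1-\alpha_2-k}$ together with $U(1-a_{j,k},2-\alpha_1-\alpha_2-k,c|x|)$, where $a_{j,k}=\alpha_1+k-j$. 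This matches (\ref{diff}). For $x<0$, use the symmetry $f_{X_1-X_2}(x)=f_{X_2-X_1}(-x)$ and swap the roles of the two variables. After reindexing, the denominator becomes $\Gamma(\alpha_2+j)$ and the exponential becomes $\mathrm{e}^{-|x|/\beta_2}$. The binomial and power factors are unchanged because they are symmetric in the original $(m,n)$ bookkeeping. This is exactly the convention $b(x)=\beta_2$, $a_{j,k}(x)=\alpha_2+j$.

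The main obstacle is bookkeeping rather than analysis. Care is needed to check that the Kummer transformation and the integral representations hold for all positive shapes, which they do since $a,b>0$. One should also check that the case $\lambda_1=0$ is handled: the factor $(\lambda_2\beta_1/(\lambda_1\beta_2))^j$ times $\lambda_1^k$ must be read as $\lambda_1^{k-j}\lambda_2^j\cdots$, with $0^0=1$. The legitimacy of termwise integration follows from nonnegativity.
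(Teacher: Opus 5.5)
Your argument is correct and arrives at the same double series as the paper, but it evaluates each summand differently. The paper expands the characteristic function (\ref{cfdiff}) by the Taylor and binomial series. That expansion is exactly the Fourier-side form of your Poisson mixture, with the same weights $p_{\lambda_1}(k-j)p_{\lambda_2}(j)$. The paper then inverts each term with the tabulated Fourier integrals (\ref{int1}) and (\ref{int2}). These produce $U(1-a_{j,k}(x),2-\alpha_1-\alpha_2-k,\cdot)$ and $M$ directly, and they treat both signs of $x$ at once through the sign of $\lambda$ in (\ref{int1}).

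You instead work in real space. You convolve two central gamma densities and use the Euler integral for $M$, the Tricomi integral for $U$, and the relation $U(a,b,z)=z^{1-b}U(a-b+1,2-b,z)$, handling $x<0$ by symmetry.

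Your route gives rigor at essentially no cost. Every integrand is nonnegative and absolutely integrable for all $\alpha_i>0$, so Tonelli covers the termwise manipulations. The paper, by contrast, has to read (\ref{int1}) and (\ref{int2}) as improper Riemann integrals when $\alpha_1+\alpha_2+k\le 1$, because the summand's characteristic function is then not integrable. It also depends on a corrected version of a tabulated formula and leaves the termwise inversion of the series implicit.

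Your route also shows transparently that each summand is a weighted central gamma difference or sum density, namely (\ref{blond}) or (\ref{rj}) with shapes $\alpha_1+k-j$ and $\alpha_2+j$. It mirrors the convolution argument the paper itself uses for Theorem \ref{thm4.1}. The paper's Fourier route is shorter in exchange, and it follows a single template that the paper reuses with (\ref{int3}) for Theorem \ref{thm2.3}.
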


\begin{remark}\label{ffrr}
Given the interest in linear combinations of independent non-central chi-square random variables, we provide the following distributional relations to allow readers to easily translate the formulas of Theorem \ref{thm1.1} and all other expressions in this paper from the non-central gamma sum and difference setting to linear combinations of two non-central chi-square random variables. For independent non-central chi-square random variables $V_1\sim \chi_{r_1}'^2(\lambda_1)$ and $V_2\sim \chi_{r_2}'^2(\lambda_2)$, if $w_1,w_2>0$, we have 
\begin{align*}w_1V_1+w_2V_2\sim \mathrm{NCGS}(r_1/2,r_2/2,2w_1,2w_2,\lambda_1/2,\lambda_2/2)
\end{align*}
and  
\begin{align}\label{chirep}
w_1V_1-w_2V_2\sim \mathrm{NCGD}(r_1/2,r_2/2,2w_1,2w_2,\lambda_1/2,\lambda_2/2).
\end{align}
On this matter, we also note that formula (\ref{diff}) is a generalisation of formula (2.5) of \cite{g25} for the PDF of the difference of two independent non-central chi-square random variables with the same number of degrees of freedom.
\end{remark}


\begin{remark}\label{zero} 1. Since $M(a,b,0)=1$ for all $a,b>0$ (see \cite[Eq.\ 13.2.2]{olver}), one can immediately see that the series representation (\ref{sum}) for the PDF of the sum $X_1+X_2$ simplifies when $\beta_1=\beta_2=\beta$. Indeed, in this case, 
\[X_1+X_2\sim \Gamma(\alpha_1+\alpha_2,\beta,\lambda_1+\lambda_2),\] 
which can be easily deduced by computing the characteristic function of $X_1+X_2$ using formula (\ref{cf}).   

\vspace{3mm}

\noindent 2. When $\lambda_1=0$ the PDF (\ref{diff}) reduces to the following single infinite series:
	\begin{align}\label{diff0}
		f_{X_1-X_2}(x)&=\beta_1^{-\alpha_1}\beta_2^{-\alpha_2}\mathrm{e}^{-\lambda_2-|x|/b(x)}\sum_{k=0}^{\infty}\frac{1}{k!\Gamma(a_{k,k}(x))}\bigg(\frac{\lambda_2}{\beta_2}\bigg)^k\bigg(\frac{1}{\beta_1}+\frac{1}{\beta_2}\bigg)^{1-\alpha_1-\alpha_2-k} \nonumber\\
		&\quad\times U\bigg(1-a_{k,k}(x),2-\alpha_1-\alpha_2-k,\bigg(\frac{1}{\beta_1}+\frac{1}{\beta_2}\bigg)|x|\bigg), \quad x\in\mathbb{R},
	\end{align}
whilst when $\lambda_2=0$ we get
\begin{align}
			f_{X_1-X_2}(x)&=\beta_1^{-\alpha_1}\beta_2^{-\alpha_2}\mathrm{e}^{-\lambda_1-|x|/b(x)}\sum_{k=0}^{\infty}\frac{1}{k!\Gamma(a_{0,k}(x))}\bigg(\frac{\lambda_1}{\beta_1}\bigg)^k\bigg(\frac{1}{\beta_1}+\frac{1}{\beta_2}\bigg)^{1-\alpha_1-\alpha_2-k}\nonumber\\
		&\quad\times U\bigg(1-a_{0,k}(x),2-\alpha_1-\alpha_2-k,\bigg(\frac{1}{\beta_1}+\frac{1}{\beta_2}\bigg)|x|\bigg), \quad x\in\mathbb{R}. \nonumber
\end{align}
When $\lambda_1=\lambda_2=0$ the PDF reduces to a single term:
\begin{align}
		f_{X_1-X_2}(x)&=\frac{1}{\Gamma(a_{0,0}(x))}\beta_1^{-\alpha_1}\beta_2^{-\alpha_2}\bigg(\frac{1}{\beta_1}+\frac{1}{\beta_2}\bigg)^{1-\alpha_1-\alpha_2}\mathrm{e}^{-|x|/b(x)}\nonumber\\
		&\quad\times U\bigg(1-a_{0,0}(x),2-\alpha_1-\alpha_2,\bigg(\frac{1}{\beta_1}+\frac{1}{\beta_2}\bigg)|x|\bigg), \quad x\in\mathbb{R}, \label{blond}
	\end{align}
which is in agreement with the formulas for the PDF of the (central) gamma difference distribution given by \cite[Eq.\ (23)]{h17} and \cite[Eq.\ (2.3)]{h21}.    

If we further assume that $\alpha_1=\alpha_2=\alpha$, then the PDF of $X_1-X_2$ can be expressed in terms of the modified Bessel function of the second kind $K_\nu(x)$ (see \cite[Chapter 10]{olver}). Indeed, applying the relation $U(a,2a,2x)=\pi^{-1/2}\mathrm{e}^x(2x)^{1/2-a}{K}_{a-1/2}(x)$ (see \cite[Eq.\ 13.6.10]{olver}) to the formula (\ref{blond}) yields the expression
\begin{align}
f_{X_1-X_2}(x)&=\frac{(\beta_1\beta_2)^{-\alpha}}{\sqrt{\pi}\Gamma(\alpha)}\bigg(\frac{1}{\beta_1}+\frac{1}{\beta_2}\bigg)^{1/2-\alpha}\exp\bigg\{\bigg(\frac{1}{\beta_2}-\frac{1}{\beta_1}\bigg)\frac{x}{2}\bigg\}\nonumber\\
&\quad\times|x|^{\alpha-1/2}K_{\alpha-1/2}\bigg(\bigg(\frac{1}{\beta_1}+\frac{1}{\beta_2}\bigg)\frac{|x|}{2}\bigg),\quad x\in\mathbb{R}.   \label{vgpdf}
\end{align}
We recognise (\ref{vgpdf}) as the PDF of a variance-gamma random variable, which is to be expected since the difference of two independent gamma random variables with common shape parameter is variance-gamma distributed (see \cite[Section 2.4]{vg review}).

\vspace{3mm}

\noindent 3. When $\lambda_1=0$ the PDF (\ref{sum}) reduces to the following single infinite series:
	\begin{align}\label{sum2}
		f_{X_1+X_2}(x)&=\beta_1^{-\alpha_1}\beta_2^{-\alpha_2}\mathrm{e}^{-\lambda_2-x/\beta_1}x^{\alpha_1+\alpha_2-1}\sum_{k=0}^{\infty}\frac{1}{k!\Gamma(\alpha_1+\alpha_2+k)}\bigg(\frac{\lambda_2x}{\beta_2}\bigg)^k\nonumber\\
	&\quad\times  M\bigg(\alpha_2+k,\alpha_1+\alpha_2+k,\bigg(\frac{1}{\beta_1}-\frac{1}{\beta_2}\bigg)x\bigg), \quad x>0,
	\end{align}
and when $\lambda_2=0$ the PDF (\ref{sum}) reduces to
\begin{align}
	f_{X_1+X_2}(x)&=\beta_1^{-\alpha_1}\beta_2^{-\alpha_2}\mathrm{e}^{-\lambda_1-x/\beta_1}x^{\alpha_1+\alpha_2-1}\sum_{k=0}^{\infty}\frac{1}{k!\Gamma(\alpha_1+\alpha_2+k)}\bigg(\frac{\lambda_1x}{\beta_1}\bigg)^k\nonumber\\
	&\quad\times  M\bigg(\alpha_2,\alpha_1+\alpha_2+k,\bigg(\frac{1}{\beta_1}-\frac{1}{\beta_2}\bigg)x\bigg), \quad x>0.\nonumber
\end{align}
When $\lambda_1=\lambda_2=0$ the PDF(\ref{sum}) reduces to a single term:  
\begin{align}\label{rj}
		f_{X_1+X_2}(x)&=\frac{\beta_1^{-\alpha_1}\beta_2^{-\alpha_2}}{\Gamma(\alpha_1+\alpha_2)}x^{\alpha_1+\alpha_2-1}\mathrm{e}^{-x/\beta_1} M\bigg(\alpha_2, \alpha_1+\alpha_2, \bigg(\frac{1}{\beta_1}-\frac{1}{\beta_2}\bigg)x\bigg), \quad x>0.
\end{align}

If we further assume that $\alpha_1=\alpha_2=\alpha$, the PDF of $X_1+X_2$ can be expressed in terms of the modified Bessel function of the first kind. Applying the formula $M(a,2a,2x)=\Gamma(a+1/2)\mathrm{e}^x(x/2)^{1/2-a}I_{a-1/2}(x)$ to equation (\ref{rj}) (see \cite[Eq.\ 13.6.9]{olver}) gives that, for $\beta_1\not=\beta_2$,
\begin{align}
f_{X_1+X_2}(x)&=\frac{\Gamma(\alpha+1/2)}{\Gamma(2\alpha)}(\beta_1\beta_2)^{-\alpha}\bigg|\frac{1}{\beta_1}-\frac{1}{\beta_2}\bigg|^{1/2-\alpha}\nonumber\\
&\quad\times (2x)^{\alpha-1/2}\exp\bigg\{-\bigg(\frac{1}{\beta_1}+\frac{1}{\beta_2}\bigg)\frac{x}{2}\bigg\}I_{\alpha-1/2}\bigg(\bigg|\frac{1}{\beta_1}-\frac{1}{\beta_2}\bigg|\frac{x}{2}\bigg), \quad x>0, \label{7337}   
\end{align}
which we recognise as the PDF of a McKay Type I random variable, as would be expected given that it was shown by \cite{ha04} that the sum of two independent gamma random variables follows the McKay Type I distribution.
In obtaining (\ref{7337}) we used the identity $x^\nu I_\nu(x)=|x|^\nu I_\nu(|x|)$ for $x\in\mathbb{R}$ (which is easily inferred from the power series representation of the modified Bessel function of the first kind given by \cite[Eq.\ 10.25.2]{olver}) in order to write down an expression in which all terms are real-valued. When $\beta_1=\beta_2=\beta$, by applying the limiting form $I_\nu(x)\sim (x/2)^\nu/\Gamma(\nu+1)$, as $x\rightarrow0$ (for $\nu>-1$)
(which is an immediate consequence of the power series representation of the modified Bessel function of the first kind given by \cite[Eq.\ 10.25.2]{olver}), the expression (\ref{7337}) is seen to reduce to the PDF of the $\Gamma(2\alpha,\beta)$ distribution.
\end{remark}

In the following proposition, we note some further cases in which the infinite series representations of the PDF of the difference $X_1-X_2$ reduce to single finite series. The formulas are expressed in terms of the generalized Laguerre polynomials $L_n^{(\alpha)}(x)$ (see \cite[Chapter 18]{olver}) and the Pochhammer symbol $(x)_n=x(x+1)\cdots(x+n-1)$.

\begin{proposition}\label{prop1} 1. Suppose that $\lambda_1=0$ and $\alpha_1=n$ is a positive integer. Then, for $x>0$, 
\begin{align}\label{rprp}
f_{X_1-X_2}(x)=\frac{\beta_1^{-n}\beta_2^{-\alpha_2}}{(n-1)!}\bigg(\frac{1}{\beta_1}+\frac{1}{\beta_2}\bigg)^{-\alpha_2}x^{n-1}\exp\bigg(-\frac{x}{\beta_1}-\frac{\lambda_2\beta_2}{\beta_1+\beta_2}\bigg)\sum_{k=0}^{n-1}\frac{u_k(n,\alpha_2,\beta_1,\beta_2,\lambda_2)}{x^k},    
\end{align}  
where $u_0(n,\alpha_2,\beta_1,\beta_2,\lambda_2)=1$ and, for $1\leq k\leq n-1$,
\begin{align*}
u_k(n,\alpha_2,\beta_1,\beta_2,\lambda_2)&=(-1)^k(1-n)_k\bigg(\frac{1}{\beta_1}+\frac{1}{\beta_2}\bigg)^{-k}L_k^{(\alpha_2-1)}\bigg(-\frac{\lambda_2\beta_1}{\beta_1+\beta_2}\bigg).
\end{align*}
2. Suppose that $\lambda_2=0$ and $\alpha_2=n$ is a positive integer. Then, for $x<0$, 
\begin{align}\label{rprp2}
f_{X_1-X_2}(x)=\frac{\beta_1^{-\alpha_1}\beta_2^{-n}}{(n-1)!}\bigg(\frac{1}{\beta_1}+\frac{1}{\beta_2}\bigg)^{-\alpha_1}|x|^{n-1}\exp\bigg(\frac{x}{\beta_2}-\frac{\lambda_1\beta_1}{\beta_1+\beta_2}\bigg)\sum_{k=0}^{n-1}\frac{u_k(n,\alpha_1,\beta_2,\beta_1,\lambda_1)}{|x|^k}.    
\end{align}
\end{proposition}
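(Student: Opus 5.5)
The plan is to specialise the single series (\ref{diff0}) from Remark \ref{zero} to $x>0$, where it becomes a finite sum, and then resum over $k$. Since $\lambda_1=0$ and $x>0$, we have $b(x)=\beta_1$ and $a_{k,k}(x)=\alpha_1+k-k=n$ for every $k$. So every term of (\ref{diff0}) contains $U(1-n,\,2-n-\alpha_2-k,\,cx)$, where $c=1/\beta_1+1/\beta_2$, together with the common factor $1/\Gamma(n)=1/(n-1)!$. Because the first parameter $1-n$ is a non-positive integer, $U$ is a polynomial of degree $n-1$. Concretely, the large-argument expansion $U(a,b,z)\sim z^{-a}\sum_{i\ge0}(a)_i(a-b+1)_i(-z)^{-i}/i!$ terminates and is exact when $a=1-n$. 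Here $a-b+1=\alpha_2+k$, so
\begin{align*}
U(1-n,2-n-\alpha_2-k,cx)=(cx)^{n-1}\sum_{i=0}^{n-1}\frac{(1-n)_i(\alpha_2+k)_i}{i!}(-1)^i(cx)^{-i}.
\end{align*}

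Substituting this into (\ref{diff0}), I will collect the powers of $c$. The factor $c^{1-n-\alpha_2-k}\cdot c^{n-1}$ equals $c^{-\alpha_2}c^{-k}$. Combined with $(\lambda_2/\beta_2)^k$, the $c^{-k}$ produces $y^k$ with $y=\lambda_2/(\beta_2 c)=\lambda_2\beta_1/(\beta_1+\beta_2)$. Since the $i$-sum is finite, the $k$- and $i$-sums can be interchanged freely. This leaves the inner sum $\sum_{k\ge0}y^k(\alpha_2+k)_i/k!$ to evaluate.

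The key step is the identity
\begin{align*}
\sum_{k=0}^\infty\frac{y^k}{k!}(\alpha+k)_i=\mathrm{e}^{y}\,i!\,L_i^{(\alpha-1)}(-y).
\end{align*}
I would prove it by writing $(\alpha+k)_i$ via Vandermonde as $\sum_{r}\binom{i}{r}(\alpha+r)_{i-r}(k-r+1)_r$. Alternatively, and more cleanly, note that $(\alpha+k)_i$ is the $i$-th derivative of $t^{\alpha+k+i-1}$ at $t=1$. Then $\sum_k y^k t^{k}/k!=\mathrm{e}^{yt}$, and by the Rodrigues-type formula $\frac{d^i}{dt^i}\big[t^{\alpha+i-1}\mathrm{e}^{yt}\big]\big|_{t=1}=\mathrm{e}^{y}i!L_i^{(\alpha-1)}(-y)$, which follows from Leibniz's rule and the explicit sum $L_i^{(\alpha-1)}(-y)=\sum_{r}\binom{\alpha-1+i}{i-r}y^r/r!$. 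As a sanity check, the case $i=1$ gives $\mathrm{e}^y(\alpha+y)$ on both sides. I expect this resummation to be the only real obstacle, mainly in keeping the signs and factorials straight.

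With the identity in hand, the $i!$ cancels and the remaining signs combine into $(-1)^i(1-n)_i c^{-i}L_i^{(\alpha_2-1)}(-y)x^{-i}$. This is exactly $u_i x^{-i}$, and for $i=0$ it gives $u_0=1$. Meanwhile $\mathrm{e}^{-\lambda_2}\mathrm{e}^{y}=\exp(-\lambda_2\beta_2/(\beta_1+\beta_2))$, and the prefactor becomes $\beta_1^{-n}\beta_2^{-\alpha_2}c^{-\alpha_2}x^{n-1}\mathrm{e}^{-x/\beta_1}/(n-1)!$, which is (\ref{rprp}). For part 2, I would observe that $-(X_1-X_2)=X_2-X_1$, so $f_{X_1-X_2}(x)=f_{X_2-X_1}(-x)$. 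Applying part 1 with the roles of $(\alpha_1,\beta_1,\lambda_1)$ and $(\alpha_2,\beta_2,\lambda_2)$ swapped, evaluated at $-x=|x|>0$, then yields (\ref{rprp2}).
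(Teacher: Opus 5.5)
Your proposal is correct and follows the paper's proof step for step: you specialise (\ref{diff0}) to $x>0$, replace $U(1-n,2-n-\alpha_2-k,cx)$ by its terminating polynomial form, interchange the finite and infinite sums, resum over $k$, and deduce part 2 from $X_1-X_2=_d-(X_2-X_1)$; the terminating form is exactly the paper's Lemma \ref{lem55}, which the paper proves via $U(-m,b,x)=(-1)^m m!L_m^{(b-1)}(x)$ rather than by appeal to the asymptotic series. The only real difference is how you prove the resummation identity $\sum_{k\geq0}y^k(\alpha+k)_i/k!=\mathrm{e}^y i!L_i^{(\alpha-1)}(-y)$: the paper writes the sum as $(\alpha)_iM(\alpha+i,\alpha,y)$ and applies Kummer's transformation together with $(\alpha)_iM(-i,\alpha,-y)=i!L_i^{(\alpha-1)}(-y)$, whereas your Leibniz-rule evaluation of $\frac{\mathrm{d}^i}{\mathrm{d}t^i}[t^{\alpha+i-1}\mathrm{e}^{yt}]$ at $t=1$ is an equally valid and more elementary proof of the same identity.
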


\begin{remark}
Part 1 of Proposition \ref{prop1} provides a closed-form formula for the PDF $f_{X_1-X_2}(x)$ for $x>0$, where $X_1\sim \Gamma(n,\beta_1,0)$ and $X_2\sim \Gamma(\alpha_2,\beta_2,\lambda_2)$ are independent. In this case, $X_1\sim \Gamma(n,\beta_1)$ (since $\lambda_1=0$) is distributed as the sum of $n$ independent exponential random variables with mean $\beta_1$. A similar comment applies to part 2 of the proposition.    
\end{remark}

In the case $\lambda_1=\lambda_2=\lambda$ and $\beta_1=\beta_2=\beta$ we are able to obtain a representation of the PDF of the difference $X_1-X_2$ expressed as a single infinite series involving the modified Bessel function of the second kind.
    
\begin{theorem}\label{thm2.3}
 Suppose that $\lambda_1=\lambda_2=\lambda$ and $\beta_1=\beta_2=\beta$. Then, for $x\in\mathbb{R}$,
	\begin{align}\label{thm2.31}
		f_{X_1-X_2}(x)=\frac{\mathrm{e}^{-2\lambda}}{\beta\sqrt{\pi}}\sum_{k=0}^{\infty}\frac{(2\lambda)^k}{k!\Gamma(\alpha_1+\alpha_2+k)}\bigg(\frac{|x|}{2\beta}\bigg)^{\alpha_1+\alpha_2+k-1/2}K_{\alpha_1+\alpha_2+k-1/2}\bigg(\frac{|x|}{\beta}\bigg).
	\end{align}
\end{theorem}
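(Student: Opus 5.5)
We work from the characteristic function (\ref{cf}) and avoid the double series of Theorem \ref{thm1.1}. With $\beta_1=\beta_2=\beta$ and $\lambda_1=\lambda_2=\lambda$, the characteristic function of $Z=X_1-X_2$ is
\begin{align*}
\phi_Z(t)&=(1-\mathrm{i}\beta t)^{-\alpha_1}(1+\mathrm{i}\beta t)^{-\alpha_2}\exp\bigg(\frac{\mathrm{i}\beta\lambda t}{1-\mathrm{i}\beta t}-\frac{\mathrm{i}\beta\lambda t}{1+\mathrm{i}\beta t}\bigg)\\
&=(1-\mathrm{i}\beta t)^{-\alpha_1}(1+\mathrm{i}\beta t)^{-\alpha_2}\exp\bigg(-\frac{2\lambda\beta^2t^2}{1+\beta^2t^2}\bigg).
\end{align*}
The key observation is that $-2\lambda\beta^2t^2/(1+\beta^2t^2)=-2\lambda+2\lambda/(1+\beta^2t^2)$. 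Expanding the exponential as a power series therefore gives
\[
\phi_Z(t)=\mathrm{e}^{-2\lambda}\sum_{k=0}^\infty\frac{(2\lambda)^k}{k!}(1-\mathrm{i}\beta t)^{-\alpha_1-k}(1+\mathrm{i}\beta t)^{-\alpha_2-k}.
\]
Each summand is the characteristic function of $G_k-G_k'$, where $G_k\sim\Gamma(\alpha_1+k,\beta)$ and $G_k'\sim\Gamma(\alpha_2+k,\beta)$ are independent. So $Z$ is a Poisson$(2\lambda)$-weighted mixture of (central) gamma difference distributions with common scale $\beta$, and
\[
f_Z(x)=\mathrm{e}^{-2\lambda}\sum_{k=0}^\infty\frac{(2\lambda)^k}{k!}f_{G_k-G_k'}(x).
\]
The interchange of sum and inverse Fourier transform is justified by dominated/monotone convergence, since all terms are nonnegative densities with total weight one.

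It remains to identify $f_{G_k-G_k'}$ with the stated Bessel term. Set $a=\alpha_1+k$ and $b=\alpha_2+k$. Formula (\ref{blond}) with $\beta_1=\beta_2=\beta$ gives, for $x>0$,
\[
f(x)=\frac{\beta^{-a-b}(2/\beta)^{1-a-b}}{\Gamma(a)}\mathrm{e}^{-x/\beta}U(1-a,2-a-b,2x/\beta).
\]
We apply Kummer's transformation $U(1-a,2-a-b,z)=z^{a+b-1}U(b,a+b,z)$ to get
\[
f(x)=\frac{\beta^{-a-b}x^{a+b-1}}{\Gamma(a)}\mathrm{e}^{-x/\beta}U(b,a+b,2x/\beta).
\]
This still depends on $a$ and $b$ separately, whereas the target (\ref{thm2.31}) depends only on $a+b$. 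So we need a different route for this step. The natural one is a direct convolution in the case $a=b$ only, which is not our case. That means the claim must be reconciled differently.

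We re-examine: the target has $K_{a+b-1/2}$ with weight $1/\Gamma(a+b)$, which is exactly the variance-gamma density (\ref{vgpdf}) with shape $\alpha=(a+b)/2$... but with index $2\alpha-1/2$? This indicates the mixture must be regrouped. We expand instead as $(1+\beta^2t^2)^{-m}$ times residual factors. If $\alpha_1\ne\alpha_2$ the characteristic function is not real, yet the target density is symmetric in $x$. The formula is therefore presumably intended for the symmetric situation, or the statement is to be read accordingly. We would check this first on $\lambda=0$, $\alpha_1\neq\alpha_2$.

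Assuming consistency, the final step uses the integral representation $K_\nu(z)=\frac{\sqrt{\pi}(z/2)^\nu}{\Gamma(\nu+1/2)}\int_1^\infty \mathrm{e}^{-zu}(u^2-1)^{\nu-1/2}\,\mathrm{d}u$, or equivalently the Fourier pair $\int_{\mathbb{R}}\mathrm{e}^{\mathrm{i}tx}|x|^{\nu}K_\nu(|x|/\beta)\,\mathrm{d}x\propto(1+\beta^2t^2)^{-\nu-1/2}$. This matches each term of the mixture, with the constants $\beta\sqrt{\pi}$ and $2^{\cdot}$ fixed by the normalisation $\int f=1$.

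The main obstacle is this term identification, in particular matching the Bessel index $\alpha_1+\alpha_2+k-1/2$. We expect to resolve it by writing each mixture term's characteristic function as a power of $(1+\beta^2t^2)$ and inverting via the $K$-Fourier pair.
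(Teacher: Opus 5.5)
\textbf{The proof cannot be completed, because the statement is false as written.} Your characteristic function and Poisson-mixture decomposition are correct, and the inconsistency you ran into is real. The step you defer (``assuming consistency'', matching the Bessel index $\alpha_1+\alpha_2+k-1/2$) therefore cannot be carried out.

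\textbf{Where the paper's proof goes wrong.} The paper reaches (\ref{thm2.31}) by replacing $(1-\mathrm{i}\beta t)^{-\alpha_1}(1+\mathrm{i}\beta t)^{-\alpha_2}$ with $(1+\beta^2t^2)^{-(\alpha_1+\alpha_2)}$. It then inverts $(1+\beta^2t^2)^{-(\alpha_1+\alpha_2+k)}$ term by term via (\ref{int3}). In fact, with $\theta=\arctan(\beta t)$,
\[
(1-\mathrm{i}\beta t)^{-\alpha_1}(1+\mathrm{i}\beta t)^{-\alpha_2}=(1+\beta^2t^2)^{-(\alpha_1+\alpha_2)/2}\,\mathrm{e}^{\mathrm{i}(\alpha_1-\alpha_2)\theta}.
\]
So the exponent is halved, and a phase survives unless $\alpha_1=\alpha_2$. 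Your planned resolution (write each term as a power of $1+\beta^2t^2$ and invert with the $K$-Fourier pair) therefore applies only when $\alpha_1=\alpha_2=\alpha$. Even then it gives the index $\alpha+k-1/2$, not $2\alpha+k-1/2$, and no normalising constant can change an index.

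\textbf{Checks confirming the failure.}
\begin{itemize}
\item The right-hand side of (\ref{thm2.31}) is even in $x$ with variance $\beta^2(2\alpha_1+2\alpha_2+4\lambda)$. But $X_1-X_2$ has mean $\beta(\alpha_1-\alpha_2)$ and variance $\beta^2(\alpha_1+\alpha_2+4\lambda)$.
\item For $\lambda=0$, $\alpha_1=\alpha_2=1/2$, the formula gives the Laplace density $\mathrm{e}^{-|x|/\beta}/(2\beta)$. The correct density from (\ref{vgpdf}) is $K_0(|x|/\beta)/(\pi\beta)$.
\item For $1/2<\alpha_1+\alpha_2\leq1$, the right-hand side is bounded at the origin, which contradicts Proposition \ref{prop2.4}.
\end{itemize}

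\textbf{What you should have concluded.} Rather than ``assuming consistency'', you should have turned your observation into a counterexample and stated what your argument actually proves.
\begin{itemize}
\item For general $\alpha_1,\alpha_2$, the correct single series is your mixture $f_{X_1-X_2}=\mathrm{e}^{-2\lambda}\sum_{k}\frac{(2\lambda)^k}{k!}f_{G_k-G_k'}$, with each term given by (\ref{blond}). No expression depending on the shapes only through $\alpha_1+\alpha_2$ can be right.
\item For $\alpha_1=\alpha_2=\alpha$, apply (\ref{vgpdf}) to each term. Equivalently, use your Kummer-transformed form with $a=b=\alpha+k$ together with $U(a,2a,2z)=\pi^{-1/2}\mathrm{e}^z(2z)^{1/2-a}K_{a-1/2}(z)$. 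This gives
\end{itemize}
\[
f_{X_1-X_2}(x)=\frac{\mathrm{e}^{-2\lambda}}{\beta\sqrt{\pi}}\sum_{k=0}^{\infty}\frac{(2\lambda)^k}{k!\,\Gamma(\alpha+k)}\bigg(\frac{|x|}{2\beta}\bigg)^{\alpha+k-1/2}K_{\alpha+k-1/2}\bigg(\frac{|x|}{\beta}\bigg).
\]
This is (\ref{thm2.31}) with $\alpha_1+\alpha_2$ replaced by $(\alpha_1+\alpha_2)/2$. The same correction is needed in Theorem \ref{thm4.2}, which is derived from (\ref{thm2.31}).
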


\subsection{Integral representations for the density}\label{sec2.2}

In the following theorem, we present integral representations for the PDFs of the non-central gamma difference and sum distributions.

\begin{theorem}\label{thm4.1} 1. Suppose that $\lambda_1,\lambda_2\not=0$. Then, for $x>0$,
	\begin{align}
	f_{X_1-X_2}(x)&=D_1(x)\int_{0}^{\infty}t^{(\alpha_2-1)/2}(1+t)^{(\alpha_1-1)/2}\exp\bigg(-\bigg(\frac{1}{\beta_1}+\frac{1}{\beta_2}\bigg)xt\bigg)\nonumber\\
	&\quad\times I_{\alpha_1-1}\bigg(2\sqrt{\frac{\lambda_1x(1+t)}{\beta_1}}\bigg)I_{\alpha_2-1}\bigg(2\sqrt{\frac{\lambda_2xt}{\beta_2}}\bigg)\,\mathrm{d}t,\label{diffint1}\\
	f_{X_1+X_2}(x)&=D_1(x)\int_{0}^{1}t^{(\alpha_2-1)/2}(1-t)^{(\alpha_1-1)/2}\exp\bigg(\bigg(\frac{1}{\beta_1}-\frac{1}{\beta_2}\bigg)xt\bigg)\nonumber\\
	&\quad\times I_{\alpha_1-1}\bigg(2\sqrt{\frac{\lambda_1x(1-t)}{\beta_1}}\bigg)I_{\alpha_2-1}\bigg(2\sqrt{\frac{\lambda_2xt}{\beta_2}}\bigg)\,\mathrm{d}t,\label{sumint1}
	\end{align}
	where
	\begin{align}
		D_1(x)=\bigg(\frac{\lambda_1}{\beta_1}\bigg)^{(1-\alpha_1)/2}\bigg(\frac{\lambda_2}{\beta_2}\bigg)^{(1-\alpha_2)/2}\beta_1^{-\alpha_1}\beta_2^{-\alpha_2}x^{(\alpha_1+\alpha_2)/2}\mathrm{e}^{-\lambda_1-\lambda_2-x/\beta_1}\nonumber.
	\end{align}
	2. Suppose that $\lambda_1=0$ and $\lambda_2\not=0$. Then, for $x>0$,
	\begin{align}
		f_{X_1-X_2}(x)&=D_2(x)\int_{0}^{\infty}t^{(\alpha_2-1)/2}(1+t)^{\alpha_1-1}\exp\bigg(-\bigg(\frac{1}{\beta_1}+\frac{1}{\beta_2}\bigg)xt\bigg)I_{\alpha_2-1}\bigg(2\sqrt{\frac{\lambda_2xt}{\beta_2}}\bigg)\,\mathrm{d}t,\label{diffint2}\\
	f_{X_1+X_2}(x)&=D_2(x)\int_{0}^{1}t^{(\alpha_2-1)/2}(1-t)^{\alpha_1-1}\exp\bigg(\bigg(\frac{1}{\beta_1}-\frac{1}{\beta_2}\bigg)xt\bigg)I_{\alpha_2-1}\bigg(2\sqrt{\frac{\lambda_2xt}{\beta_2}}\bigg)\,\mathrm{d}t,\nonumber
	\end{align}
	where 
	\begin{align}
		D_2(x)&=\frac{1}{\Gamma(\alpha_1)}\bigg(\frac{\lambda_2}{\beta_2}\bigg)^{(1-\alpha_2)/2}\beta_1^{-\alpha_1}\beta_2^{-\alpha_2}x^{(2\alpha_1+\alpha_2-1)/2}\mathrm{e}^{-\lambda_2-x/\beta_1}\nonumber.
	\end{align}
	3. Suppose that $\lambda_2=0$ and $\lambda_1\not=0$. Then, for $x>0$,
		\begin{align}
		f_{X_1-X_2}(x)&=D_3(x)\int_{0}^{\infty}t^{\alpha_2-1}(1+t)^{(\alpha_1-1)/2}\exp\bigg(-\bigg(\frac{1}{\beta_1}+\frac{1}{\beta_2}\bigg)xt\bigg)I_{\alpha_1-1}\bigg(2\sqrt{\frac{\lambda_1x(1+t)}{\beta_1}}\bigg)\,\mathrm{d}t,\label{diffint3}\\
		f_{X_1+X_2}(x)&=D_3(x)\int_{0}^{1}t^{\alpha_2-1}(1-t)^{(\alpha_1-1)/2}\exp\bigg(\bigg(\frac{1}{\beta_1}-\frac{1}{\beta_2}\bigg)xt\bigg)I_{\alpha_1-1}\bigg(2\sqrt{\frac{\lambda_1x(1-t)}{\beta_1}}\bigg)\,\mathrm{d}t,\nonumber
	\end{align}
	where 
	\begin{align}
		D_3(x)&=\frac{1}{\Gamma(\alpha_2)}\bigg(\frac{\lambda_1}{\beta_1}\bigg)^{(1-\alpha_1)/2}\beta_1^{-\alpha_1}\beta_2^{-\alpha_2}x^{(\alpha_1+2\alpha_2-1)/2}\mathrm{e}^{-\lambda_1-x/\beta_1}\nonumber.
	\end{align}
	4. Integral representations for the PDF $f_{X_1-X_2}(x)$ that hold for $x<0$ are obtained by replacing $(x,\alpha_1,\alpha_2,\beta_1,\beta_2,\lambda_1,\lambda_2)$ by $(-x,\alpha_2,\alpha_1,\beta_2,\beta_1,\lambda_2,\lambda_1)$ in formulas (\ref{diffint1}), (\ref{diffint2}) and (\ref{diffint3}). 
\end{theorem}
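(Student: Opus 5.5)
The plan is to use the convolution formulas directly, together with the closed form (\ref{pdfncg}) for the non-central gamma density. For the difference with $x>0$ we have
\[
f_{X_1-X_2}(x)=\int_0^\infty f_{X_1}(x+y)f_{X_2}(y)\,\mathrm{d}y,
\]
and for the sum with $x>0$ we have
\[
f_{X_1+X_2}(x)=\int_0^x f_{X_1}(x-y)f_{X_2}(y)\,\mathrm{d}y.
\]
In both integrals I will substitute $y=xt$. This turns the range into $t\in(0,\infty)$ for the difference and $t\in(0,1)$ for the sum, with $\mathrm{d}y=x\,\mathrm{d}t$.

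\emph{Part 1 ($\lambda_1,\lambda_2\neq0$).} I insert (\ref{pdfncg}) for both factors.
\begin{itemize}
\item The first factor contributes $\beta_1^{-1}\mathrm{e}^{-\lambda_1-x(1\pm t)/\beta_1}\big(x(1\pm t)/(\beta_1\lambda_1)\big)^{(\alpha_1-1)/2}I_{\alpha_1-1}\big(2\sqrt{\lambda_1x(1\pm t)/\beta_1}\big)$.
\item The second factor contributes $\beta_2^{-1}\mathrm{e}^{-\lambda_2-xt/\beta_2}\big(xt/(\beta_2\lambda_2)\big)^{(\alpha_2-1)/2}I_{\alpha_2-1}\big(2\sqrt{\lambda_2xt/\beta_2}\big)$.
\end{itemize}
Next I collect the exponentials. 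This gives $\mathrm{e}^{-\lambda_1-\lambda_2-x/\beta_1}\exp(\mp xt/\beta_1-xt/\beta_2)$, which is the exponent $-(1/\beta_1+1/\beta_2)xt$ for the difference and $(1/\beta_1-1/\beta_2)xt$ for the sum.

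The constants combine as
\[
\beta_1^{-1}\beta_2^{-1}(\beta_1\lambda_1)^{-(\alpha_1-1)/2}(\beta_2\lambda_2)^{-(\alpha_2-1)/2}
=(\lambda_1/\beta_1)^{(1-\alpha_1)/2}(\lambda_2/\beta_2)^{(1-\alpha_2)/2}\beta_1^{-\alpha_1}\beta_2^{-\alpha_2}.
\]
This identity follows by writing $(\beta_1\lambda_1)^{-(\alpha_1-1)/2}=(\lambda_1/\beta_1)^{(1-\alpha_1)/2}\beta_1^{1-\alpha_1}$, and similarly for the second factor. The powers of $x$ collect to $x^{(\alpha_1-1)/2+(\alpha_2-1)/2+1}=x^{(\alpha_1+\alpha_2)/2}$, where the extra $+1$ comes from the Jacobian. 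Together these give exactly $D_1(x)$ and the stated integrands (\ref{diffint1}) and (\ref{sumint1}).

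\emph{Parts 2 and 3.} When $\lambda_1=0$, the density of $X_1$ is the gamma density $x^{\alpha_1-1}\mathrm{e}^{-x/\beta_1}/(\beta_1^{\alpha_1}\Gamma(\alpha_1))$. This is also the $\lambda_1\to0$ limit of (\ref{pdfncg}), since $I_\nu(z)\sim(z/2)^\nu/\Gamma(\nu+1)$. Repeating the same computation, the first factor now contributes $\beta_1^{-\alpha_1}\Gamma(\alpha_1)^{-1}x^{\alpha_1-1}(1\pm t)^{\alpha_1-1}$. Combined with $x^{(\alpha_2-1)/2}$ and the Jacobian, the power of $x$ becomes $x^{(2\alpha_1+\alpha_2-1)/2}$. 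This yields $D_2$, and Part 3 is symmetric, yielding $D_3$.

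\emph{Part 4.} For $x<0$ I write $f_{X_1-X_2}(x)=f_{X_2-X_1}(-x)$. Applying the $x>0$ formulas to $X_2-X_1$ then amounts to swapping the indices, which is exactly the stated parameter substitution.

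There is no real obstacle. The only care needed is checking absolute convergence, which holds because the exponential decay in $t$ dominates the growth $I_\nu(c\sqrt{t})\lesssim \mathrm{e}^{c\sqrt t}$. I also need to track the constant exponents accurately, which is the step most prone to error.
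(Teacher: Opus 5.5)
Your proposal is correct and follows the paper's proof essentially verbatim. Both arguments insert the closed-form non-central gamma density (or the central gamma density when $\lambda_i=0$) into the convolution integrals, substitute $y=xt$, and handle $x<0$ via $X_1-X_2=_d-(X_2-X_1)$. Your bookkeeping of the constants, powers of $x$ and exponents checks out against $D_1$, $D_2$ and $D_3$.
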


\begin{remark}\label{remdiff0}
The assertion in part 4 of Theorem \ref{thm4.1} is immediate from the basic distributional relation $X_1-X_2=_d -(X_1'-X_2')$, where $X_1'\sim\Gamma(\alpha_2,\beta_2,\lambda_2)$ and $X_2'\sim\Gamma(\alpha_1,\beta_1,\lambda_1)$ are independent.    
\end{remark}


In the case $\lambda_1=\lambda_2=\lambda$ and $\beta_1=\beta_2=\beta$ we can obtain a simpler integral representation for the PDF of the non-central gamma difference distribution.

\begin{theorem}
	\label{thm4.2} Suppose that $\lambda_1=\lambda_2=\lambda$ and $\beta_1=\beta_2=\beta$. Then, for $x\in\mathbb{R},$
	\begin{align}\label{thm4.21}
		f_{X_1-X_2}(x)&=\frac{\mathrm{e}^{-2\lambda}}{2^{\alpha_1+\alpha_2}\beta\sqrt{\pi}}\bigg(\frac{\lambda}{2}\bigg)^{(1-\alpha_1-\alpha_2)/2}\bigg(\frac{|x|}{\beta}\bigg)^{\alpha_1+\alpha_2}\nonumber\\
		&\quad\times\int_{0}^{\infty}t^{-(\alpha_1+\alpha_2+2)/2}\exp\bigg(-t-\frac{x^2}{4\beta^2t}\bigg)I_{\alpha_1+\alpha_2-1}\bigg(\frac{|x|}{\beta}\sqrt{\frac{2\lambda}{t}}\bigg)\,\mathrm{d}t.
	\end{align}
\end{theorem}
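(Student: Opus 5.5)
The plan is to start from the series representation (\ref{thm2.31}) of Theorem \ref{thm2.3}, replace each Bessel function $K_\nu$ by an integral, and then sum the series inside the integral in closed form as a modified Bessel function of the first kind. Write $\alpha=\alpha_1+\alpha_2$ and $\nu_k=\alpha+k-1/2$. The standard representation (see \cite[Chapter 10]{olver})
\[
K_\nu(z)=\frac12\Big(\frac z2\Big)^{\nu}\int_0^\infty t^{-\nu-1}\exp\Big(-t-\frac{z^2}{4t}\Big)\,\mathrm{d}t,\qquad z>0,
\]
with $z=|x|/\beta$ gives
\[
\Big(\frac{|x|}{2\beta}\Big)^{\nu_k}K_{\nu_k}\Big(\frac{|x|}{\beta}\Big)=\frac12\Big(\frac{|x|}{2\beta}\Big)^{2\alpha+2k-1}\int_0^\infty t^{-\alpha-k-1/2}\exp\Big(-t-\frac{x^2}{4\beta^2t}\Big)\,\mathrm{d}t .
\]

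Next, substitute this into (\ref{thm2.31}) and interchange sum and integral. The interchange is justified by Tonelli's theorem, since every term is nonnegative. The $k$-dependent part of the integrand is then
\[
\sum_{k=0}^\infty\frac{y^k}{k!\,\Gamma(\alpha+k)},\qquad y=2\lambda\cdot\frac{x^2}{4\beta^2t}=\frac{\lambda x^2}{2\beta^2t}.
\]
By the power series \cite[Eq.\ 10.25.2]{olver}, this sum equals $y^{-(\alpha-1)/2}I_{\alpha-1}(2\sqrt y)$, and $2\sqrt y=(|x|/\beta)\sqrt{2\lambda/t}$, which is exactly the Bessel argument in (\ref{thm4.21}).

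It remains to collect powers, which is routine. The factor $y^{-(\alpha-1)/2}$ contributes $(\lambda/2)^{(1-\alpha)/2}(|x|/\beta)^{1-\alpha}t^{(\alpha-1)/2}$. Combined with $t^{-\alpha-1/2}$, this gives $t^{-(\alpha+2)/2}$. Combined with $(|x|/\beta)^{2\alpha-1}$, it leaves $(|x|/\beta)^{\alpha}$. The prefactor $\mathrm{e}^{-2\lambda}/(\beta\sqrt\pi)$ carries through unchanged, and there remains a pure power of $2$ coming from the $\frac12$ and the $2^{-(2\alpha-1)}$. This bookkeeping of the power of $2$ is the step where care is needed. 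My own tally gives $2^{-2\alpha}$, which does not match the stated $2^{-\alpha}$, so I would re-derive it carefully, for instance by checking the case $\lambda\to0$ against (\ref{vgpdf}) with $\beta_1=\beta_2$. If the discrepancy persists, it would indicate a typo in the constant rather than a flaw in the method.

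Finally, the point $x=0$ (when it lies in the support of a finite density) is handled by continuity, or by reading both sides as limits. The case $\lambda=0$ is covered by interpreting $(\lambda/2)^{(1-\alpha)/2}I_{\alpha-1}(\cdot)$ through its limiting form. The main obstacle is therefore only the constant bookkeeping; the analytic steps are standard.
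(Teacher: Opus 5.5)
Your route is the paper's own proof: substitute the integral representation of $K_\nu$ into (\ref{thm2.31}), interchange sum and integral, and sum the series to $y^{-(\alpha-1)/2}I_{\alpha-1}(2\sqrt{y})$. Your Tonelli justification for the interchange is the right one; the paper leaves it implicit.

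Your tally $2^{-2(\alpha_1+\alpha_2)}$ is correct and needs no re-derivation. The paper's own intermediate display (\ref{eee1}) carries the prefactor $\frac{1}{2}\big(\frac{|x|}{2\beta}\big)^{2(\alpha_1+\alpha_2-1/2)}$. The next step (\ref{eee2}) produces no power of $2$ other than $(\lambda/2)^{(1-\alpha_1-\alpha_2)/2}$. So the $2^{\alpha_1+\alpha_2}$ in (\ref{thm4.21}) is an error. As stated, the right-hand side equals $2^{\alpha_1+\alpha_2}$ times the Poisson mixture (\ref{thm2.31}), and so it has total mass $2^{\alpha_1+\alpha_2}$.

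One caution: the check you suggest against (\ref{vgpdf}) will not settle the constant, because it fails for an unrelated reason.
\begin{itemize}
\item Formula (\ref{vgpdf}) needs $\alpha_1=\alpha_2=a$ and has Bessel order $a-1/2$.
\item Formula (\ref{thm2.31}) at $\lambda=0$ has order $2a-1/2$.
\end{itemize}
This mismatch exposes a flaw upstream in Theorem \ref{thm2.3}. We have $(1-\mathrm{i}\beta t)^{-\alpha_1}(1+\mathrm{i}\beta t)^{-\alpha_2}=(1+\beta^2t^2)^{-\alpha_1}(1+\mathrm{i}\beta t)^{\alpha_1-\alpha_2}$. This is a power of $1+\beta^2t^2$ only when $\alpha_1=\alpha_2=a$, and then it equals $(1+\beta^2t^2)^{-a}$, not $(1+\beta^2t^2)^{-2a}$. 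For $\alpha_1\neq\alpha_2$ no symmetric formula can hold at all, since $\mathbb{E}[X_1-X_2]=\beta(\alpha_1-\alpha_2)\neq0$.

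To calibrate your computation, compare instead with (\ref{thm2.31}) itself in the limit $\lambda\to0$. That comparison again gives $2^{-2(\alpha_1+\alpha_2)}$.

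If you run the same argument from the corrected series (common shape $a$ in place of $\alpha_1+\alpha_2$ throughout), the constant becomes $2^{-2a}=2^{-(\alpha_1+\alpha_2)}$. That is the stated constant, but then every other occurrence of $\alpha_1+\alpha_2$ in (\ref{thm4.21}) must be replaced by $a$.
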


\section{Asymptotic expansions relating to the distribution}\label{sec3}

We begin by studying the asymptotic behaviour of the PDFs of the random variables $X_1-X_2$ and $X_1+X_2$ at the origin. One of the formulas given in the following proposition is stated in terms of the Horn function $\Psi_2$, given by the double infinite series
\begin{align}\label{horn}
\Psi_2(a;b_1,b_2;x,y)=\sum_{m=0}^\infty \sum_{n=0}^\infty\frac{(a)_{m+n}}{(b_1)_m(b_2)_n}\frac{x^m}{m!}\frac{y^n}{n!},   
\end{align}
which is absolutely convergent for all $x,y\in\mathbb{R}$ (see \cite[p.\ 225]{e53}).

\begin{proposition}\label{prop2.4}
	1. Suppose that $\alpha_1+\alpha_2<1$. Then, as $x\rightarrow0$,
	\begin{equation}\label{diff0lim1}
		f_{X_1-X_2}(x)\sim \frac{1}{\pi}\beta_1^{-\alpha_1}\beta_2^{-\alpha_2}\mathrm{e}^{-\lambda_1-\lambda_2}\sin(\pi a_{0,0}(x))\Gamma(1-\alpha_1-\alpha_2)\,|x|^{\alpha_1+\alpha_2-1},
	\end{equation}
    where $a_{0,0}(x)=\alpha_1$ for $x>0$ and $a_{0,0}(x)=\alpha_2$ for $x<0$.

    \vspace{3mm}
    
	\noindent 2. Suppose that $\alpha_1+\alpha_2=1$. Then, as $x\rightarrow0$,
	\begin{equation}\label{diff0lim2}
	f_{X_1-X_2}(x)\sim-\frac{1}{\pi}\beta_1^{-\alpha_1}\beta_2^{-\alpha_2}\mathrm{e}^{-\lambda_1-\lambda_2}\sin(\pi a_{0,0}(x))\ln|x|.
	\end{equation}
    3. The distribution of $X_1-X_2$ is unimodal with mode 0 if $\alpha_1+\alpha_2\leq 1$.

    \vspace{3mm}

    \noindent
4. Suppose that $\alpha_1+\alpha_2>1$. Then
\begin{align}
f_{X_1-X_2}(0)&=\mathrm{e}^{-\lambda_1-\lambda_2} \beta_1^{-\alpha_1}\beta_2^{-\alpha_2}\bigg(\frac{1}{\beta_1}+\frac{1}{\beta_2}\bigg)^{1-\alpha_1-\alpha_2} \frac{\Gamma(\alpha_1+\alpha_2-1)}{\Gamma(\alpha_1)\Gamma(\alpha_2)}\nonumber\\
&\quad\times  \Psi_2\bigg(\alpha_1+\alpha_2-1,\alpha_1,\alpha_2,\frac{\lambda_1\beta_2}{\beta_1+\beta_2},\frac{\lambda_2\beta_1}{\beta_1+\beta_2}\bigg). \label{hornf}
\end{align}

    \vspace{3mm}

    \noindent
	5. The PDF $f_{X_1-X_2}(x)$ is bounded for all $x\in\mathbb{R}$ if and only if $\alpha_1+\alpha_2>1$.
\end{proposition}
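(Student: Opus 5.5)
We will work throughout from the series representation (\ref{diff}) of Theorem \ref{thm1.1}, treating $x>0$ (so $b(x)=\beta_1$, $a_{j,k}(x)=\alpha_1+k-j$). The case $x<0$ then follows from Remark \ref{remdiff0} by swapping the parameter indices. Write $c=1/\beta_1+1/\beta_2$ and $s=\alpha_1+\alpha_2$.

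\emph{Parts 1 and 2.} We need the small-argument behaviour of $U(1-a_{j,k},2-s-k,cx)$. For the term $k=j=0$ with $s<1$, the second parameter $b=2-s$ exceeds $1$. The standard limiting form (see \cite[Eq.\ 13.2.16]{olver}) then gives $U(a,b,z)\sim\Gamma(b-1)z^{1-b}/\Gamma(a)$, so
\[
U(1-\alpha_1,2-s,cx)\sim \frac{\Gamma(1-s)}{\Gamma(1-\alpha_1)}(cx)^{s-1}.
\]
Multiplying by $1/\Gamma(\alpha_1)$ and using the reflection formula $1/(\Gamma(\alpha_1)\Gamma(1-\alpha_1))=\sin(\pi\alpha_1)/\pi$, the powers of $c$ cancel and we obtain (\ref{diff0lim1}). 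For $k\ge1$ the parameter $b=2-s-k$ is smaller. Each such term is $O(x^{s-1+k})$, or $O(1)$ or $O(\ln x)$, because $U(a,b,z)=O(z^{1-b})+O(1)$. Hence these terms are $o(x^{s-1})$.

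The main obstacle is exchanging the limit with the double sum. I would handle it by bounding $|U(a,b,z)|$ uniformly for $z\in(0,1]$ by $C_k(z^{1-b}+1+|\ln z|)$, using the integral or Kummer-transform representation, with constants $C_k$ growing at most like $k!\,A^k$. The factor $1/(k!\,\Gamma(a_{j,k}))$ then makes the series converge, and dominated convergence applies.

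If $1/\Gamma(1-\alpha_1)$ vanishes (e.g.\ $\alpha_1$ an integer, impossible here since $s<1$), the formula degenerates consistently. For $s=1$ the leading term has $b=1$, and $U(a,1,z)\sim-\ln z/\Gamma(a)$ (\cite[Eq.\ 13.2.19]{olver}). The same reflection argument gives (\ref{diff0lim2}), with the $k\ge1$ terms bounded as before.

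\emph{Part 4.} For $s>1$, every $b=2-s-k<1$. The limiting value is $U(a,b,0)=\Gamma(1-b)/\Gamma(a-b+1)$, which here equals $\Gamma(s+k-1)/\Gamma(\alpha_2+j)$. Plugging this into (\ref{diff}) at $x=0$ (justified by the same domination, or by continuity) gives
\[
\sum_{k}\sum_{j\le k}\frac{\Gamma(s+k-1)}{j!\,(k-j)!\,\Gamma(\alpha_1+k-j)\Gamma(\alpha_2+j)}\Big(\frac{\lambda_1}{\beta_1}\Big)^{k-j}\Big(\frac{\lambda_2}{\beta_2}\Big)^{j}c^{1-s-k}.
\]
Setting $m=k-j$ and $n=j$, and writing $\Gamma(s-1+m+n)=\Gamma(s-1)(s-1)_{m+n}$, $\Gamma(\alpha_1+m)=\Gamma(\alpha_1)(\alpha_1)_m$, and $(\lambda_1/\beta_1)/c=\lambda_1\beta_2/(\beta_1+\beta_2)$, we recover the Horn series (\ref{horn}), which proves (\ref{hornf}).

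\emph{Parts 3 and 5.} Part 5 follows directly. If $s\le1$, parts 1 and 2 show blow-up at $0$, since $\sin(\pi\alpha_i)>0$ when $\alpha_i<1$. If $s>1$, the density is continuous on $\mathbb{R}$, finite at $0$ by part 4, and tends to $0$ at $\pm\infty$ (e.g.\ from the integral representations or the exponential factor), so it is bounded.

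For part 3 I would use the unimodality route. When $s\le1$ both $\alpha_i<1$, so the central $\Gamma(\alpha_i+k,\beta_i)$ densities need care. A cleaner approach is to show directly that $f_{X_1-X_2}$ is decreasing on $(0,\infty)$. By (\ref{diffint1})–(\ref{diffint3}), after substituting $u=xt$, the density is an integral of the form
\[
\int_0^\infty g_1(x+u)\,g_2(u)\,\mathrm{d}u,
\]
where $g_i$ is the non-central gamma density (\ref{pdfncg}) of $X_i$. Hence it is decreasing in $x$ whenever $g_1$ is decreasing. However, $g_1$ is decreasing only when $\alpha_1\le1$ and $\lambda_1$ is small, so the general non-central case is delicate. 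I would instead appeal to the mixture (\ref{mix}), though this may fall short: components with $\alpha_1+k>1$ are not decreasing. I expect this to be the weakest point of the plan. As a fallback I would restrict to checking monotonicity via the series termwise, using that each $\mathrm{e}^{-x/\beta_1}U(\cdot)$ term is completely monotone for $s+k\le\ldots$, and accept that the argument may need refinement there.
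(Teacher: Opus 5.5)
Your parts 1, 2, 4 and 5 follow the paper's proof. In parts 1 and 2 you isolate the dominant $j=k=0$ term of (\ref{diff}) and apply the small-argument forms of $U$ and the reflection formula. In part 4 you take the limit termwise using $U(a,b,0)=\Gamma(1-b)/\Gamma(a-b+1)$ and re-index into $\Psi_2$.

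You also try to justify the termwise limit, which the paper does not do, but the bound you state is too weak. Constants $C_k\sim k!\,A^k$ that do not depend on $j$, multiplied by $\binom{k}{j}/(k!\,\Gamma(a_{j,k}(x)))$, produce a majorant whose $j=k$ terms alone form the series $\sum_k\bigl(A\lambda_2\beta_1/(\beta_1+\beta_2)\bigr)^k$. This diverges for large $\lambda_2$, so dominated convergence cannot be applied with it. You need the factor $1/\Gamma(\alpha_2+j)$. It comes out of Kummer's relation and the integral representation: with your $s=\alpha_1+\alpha_2$ and $x>0$,
\[
U\bigl(1-a_{j,k}(x),\,2-s-k,\,z\bigr)=z^{s+k-1}U(\alpha_2+j,s+k,z)=\frac{1}{\Gamma(\alpha_2+j)}\int_0^\infty \mathrm{e}^{-u}u^{\alpha_2+j-1}(z+u)^{\alpha_1+k-j-1}\,\mathrm{d}u .
\]
For $0<z\le1$ and $s+k>1$ this gives a majorant that is a $\Psi_2$-type series with enlarged arguments, and that series converges. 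For the ``if'' direction of part 5, the bound $|\phi_{X_1-X_2}(t)|\le(1+\beta_1^2t^2)^{-\alpha_1/2}(1+\beta_2^2t^2)^{-\alpha_2/2}$ is integrable when $s>1$. This gives the continuity and boundedness you assert in one line.

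The genuine gap is part 3, and it is not only that your argument is unfinished. The property you set out to prove is that $f_{X_1-X_2}$ is nondecreasing on $(-\infty,0)$ and nonincreasing on $(0,\infty)$. That property fails in general, so neither the convolution route nor termwise monotonicity can succeed.

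To see this, take $\lambda_2=0$, $\alpha_1+\alpha_2\le1$ and let $\lambda_1\to\infty$.
\begin{itemize}
\item From (\ref{diffint3}), bound $I_{\alpha_1-1}(y)\le C\mathrm{e}^{y}$ for $y\ge1$, and use $2\sqrt{ab}\le a/\epsilon+\epsilon b$ with $1/\beta_1<\epsilon<1/\beta_1+1/\beta_2$. This gives $f_{X_1-X_2}(1)=O(\mathrm{e}^{-\kappa\lambda_1})$ for some $\kappa>0$.
\item On the other hand, $X_1-X_2$ has mean $\beta_1(\alpha_1+\lambda_1)-\beta_2\alpha_2$ and standard deviation $\sigma=O(\sqrt{\lambda_1})$. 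Chebyshev's inequality then forces the density to be at least $3/(16\sigma)$ at some $x_0>1$.
\item Together with $f_{X_1-X_2}(x)\to\infty$ as $x\downarrow0$, this shows the density is not monotone on $(0,\infty)$.
\end{itemize}
The mixture (\ref{mix}) explains the picture: there is a spike of weight about $\mathrm{e}^{-\lambda_1}$ at $0$ and a bump near $\beta_1\lambda_1$.

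What the paper's proof of part 3 actually establishes is the weaker statement that $0$ is the unique point where the density is maximal. The argument is short. By parts 1 and 2, $f_{X_1-X_2}(x)\to+\infty$ as $x\to0$, because $\sin(\pi a_{0,0}(x))>0$ and $\Gamma(1-\alpha_1-\alpha_2)>0$, respectively because $-\ln|x|\to\infty$. Meanwhile $f_{X_1-X_2}(x)<\infty$ at every $x\neq0$, since each $U$ in (\ref{diff}) is finite at a positive argument and the series converges. You already have these ingredients. What is missing is to drop the monotonicity goal and state the conclusion in this weaker sense.
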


\begin{proposition}\label{prop2.5}
1. Let $\alpha_1,\alpha_2>0$. Then, as $x\rightarrow0$,
	\begin{equation}\label{lim7}
		f_{X_1+X_2}(x)\sim \frac{\beta_1^{-\alpha_1}\beta_2^{-\alpha_2}}{\Gamma(\alpha_1+\alpha_2)}\mathrm{e}^{-\lambda_1-\lambda_2}x^{\alpha_1+\alpha_2-1}.
	\end{equation}
  2. The distribution of $X_1+X_2$ is unimodal with mode 0 if $\alpha_1+\alpha_2< 1$.

    \vspace{3mm}

    \noindent
	3. The PDF $f_{X_1+X_2}(x)$ is bounded for all $x\in\mathbb{R}$ if and only if $\alpha_1+\alpha_2>1$.    
\end{proposition}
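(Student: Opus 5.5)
Part 1 comes straight from the series representation (\ref{sum}) of Theorem \ref{thm1.1}. As $x\rightarrow0^+$ the prefactor $\mathrm{e}^{-x/\beta_1}\to1$. Each confluent hypergeometric function satisfies $M(a,b,z)\to1$ as $z\to0$. The $k$-th term of the outer sum carries the factor $(\lambda_1x/\beta_1)^k$, so only the $k=0$ term, namely $1/\Gamma(\alpha_1+\alpha_2)$, survives in the limit.

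To justify letting $x\rightarrow0$ termwise, I will bound the terms uniformly for $0<x\le1$. Set $c=|1/\beta_1-1/\beta_2|$. Since $\alpha_2+j\le\alpha_1+\alpha_2+k$, each coefficient of $M$ is at most $1/n!$, which gives $|M(\alpha_2+j,\alpha_1+\alpha_2+k,\pm cx)|\le \mathrm{e}^{c}$. The inner binomial sum is therefore at most $\mathrm{e}^c(1+\lambda_2\beta_1/(\lambda_1\beta_2))^k$; when $\lambda_1=0$ one uses (\ref{sum2}) instead. The $k$-th term is then dominated by a summable sequence of the form $C^k/(k!\,\Gamma(\alpha_1+\alpha_2+k))$, and dominated convergence yields (\ref{lim7}).

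An alternative route avoids series altogether. Using (\ref{pdfncg}) and $I_\nu(z)\sim(z/2)^\nu/\Gamma(\nu+1)$, each density satisfies $f_{X_i}(y)=\mathrm{e}^{-\lambda_i}\beta_i^{-\alpha_i}y^{\alpha_i-1}(1+o(1))/\Gamma(\alpha_i)$ as $y\to0$. In the convolution $\int_0^x f_{X_1}(x-y)f_{X_2}(y)\,\mathrm{d}y$, substitute $y=xt$. The Beta integral $B(\alpha_1,\alpha_2)$ then appears and gives the same constant. I would mention this as a check.

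Part 3 follows from Part 1. If $\alpha_1+\alpha_2<1$, then $x^{\alpha_1+\alpha_2-1}\to\infty$ and the density is unbounded. If $\alpha_1+\alpha_2=1$, the limit at $0$ is the positive constant in (\ref{lim7}), so the density does not blow up at the origin; this case sits awkwardly with the ``only if'' and I would need to check how the paper intends it.

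For $\alpha_1+\alpha_2>1$, the density is continuous on $(0,\infty)$ and tends to $0$ at $0$. At infinity I need decay. The cleanest route is that the convolution of two densities of which at least one is bounded is bounded, and $f_{X_i}$ is bounded whenever $\alpha_i\ge1$. In the general case I would split the convolution: on $y<x/2$ bound $f_{X_1}(x-y)$ by its maximum over $[x/2,\infty)$, and on $y>x/2$ bound $f_{X_2}(y)$ by its maximum over $[x/2,\infty)$. Each remaining integral is at most $1$, so the convolution is bounded for $x\ge1$. For $x\le 1$, Part 1 already gives boundedness. This step is routine.

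Part 2 is the main obstacle, since unimodality requires $f_{X_1+X_2}$ to be nonincreasing on $(0,\infty)$. The plan is to use the Poisson mixture (\ref{mix}). By independence, $X_1+X_2$ is a mixture over $(k,l)$, with weights $p_{\lambda_1}(k)p_{\lambda_2}(l)$, of the laws of $G_1+G_2$ with $G_1\sim\Gamma(\alpha_1+k,\beta_1)$ and $G_2\sim\Gamma(\alpha_2+l,\beta_2)$.

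A sum of independent gammas is log-concave when both shapes are at least $1$. When a shape is below $1$, the law is still a convolution of monotone densities. A cleaner fact is that a gamma density with shape $\le1$ is completely monotone. I would try showing instead that $f_{X_1+X_2}$ is decreasing by writing (\ref{rj})-type terms in the form $x^{s-1}\mathrm{e}^{-x/\beta_1}M(\cdot)$.

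The difficulty is that mixture components with $k+l\ge1$ have total shape $\alpha_1+\alpha_2+k+l$, which can exceed $1$. Those components are not decreasing, so the mixture argument fails and unimodality with mode $0$ may not hold for all $\lambda_i$. I therefore expect this step to need either a direct derivative argument from (\ref{sum}) or an additional restriction, and I would first test numerically whether the claim holds for large $\lambda_i$.
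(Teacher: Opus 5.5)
Parts 1 and 3 are sound. Part 1 is the paper's argument: the $j=k=0$ term of (\ref{sum}) dominates because $M\to1$. Your bound $|M(a,b,z)|\le\mathrm{e}^{|z|}$ for $0<a\le b$ supplies the dominated-convergence step that the paper leaves implicit. For Part 3, your convolution split $f_{X_1+X_2}(x)\le\sup_{[x/2,\infty)}f_{X_1}+\sup_{[x/2,\infty)}f_{X_2}$ gives boundedness away from $0$ more cleanly than the paper's appeal to Proposition \ref{prop2.4}.

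You are also right about $\alpha_1+\alpha_2=1$. There (\ref{lim7}) gives a finite positive limit at $0$, and your split bound covers everything else, so the density is bounded. The ``only if'' in Part 3 is therefore false; for instance, $\alpha_1=\alpha_2=1/2$, $\beta_1=\beta_2$, $\lambda_1=\lambda_2=0$ gives an exponential law. The correct condition is $\alpha_1+\alpha_2\ge1$. The analogy with Proposition \ref{prop2.4} breaks because $M(\cdot,\cdot,z)$ is entire in $z$, so no logarithmic term like that of $U(a,1,x)$ arises.

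The genuine gap is Part 2, where you stop at ``test numerically'' and prove nothing. Your suspicion is correct if unimodality means $f_{X_1+X_2}$ is nonincreasing on $(0,\infty)$, and an explicit counterexample settles it. Take $\alpha_1=\alpha_2=1/4$, $\beta_1=\beta_2=2$, $\lambda_1=\lambda_2=9/4$. By part 1 of Remark \ref{zero}, $X_1+X_2\sim\Gamma(1/2,2,9/2)$, which is $\chi_1'^2(9)$, the law of $(Z+3)^2$ with $Z\sim N(0,1)$. Its density is $h(x)=(2\pi x)^{-1/2}\mathrm{e}^{-(x+9)/2}\cosh(3\sqrt{x})$. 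Since $h(1)\approx0.027<h(9)\approx0.066$, the density increases somewhere on $(0,\infty)$.

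The paper's proof defers to that of Part 3 of Proposition \ref{prop2.4}, and it establishes only a weak statement: the density tends to $+\infty$ at $0$ and is finite at every $x>0$, so $0$ is the unique point where its supremum is attained. In that sense Part 2 follows in one line from your Part 1 and your split bound.

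The strong form does hold when $\lambda_1=\lambda_2=0$. Substituting $y=xt$ in the convolution writes $f_{X_1+X_2}(x)$ as a constant times $x^{\alpha_1+\alpha_2-1}\int_0^1t^{\alpha_2-1}(1-t)^{\alpha_1-1}\exp\{-x((1-t)/\beta_1+t/\beta_2)\}\,\mathrm{d}t$. This is a product of two positive decreasing functions.

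So you must fix which meaning of ``unimodal'' you are proving. Then supply either the one-line argument for the weak version or the counterexample for the strong one.
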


\begin{remark} Since $\Psi_2(a;b_1,b_2;0,y)=M(a,b_2,y)$ and $\Psi_2(a;b_1,b_2;x,0)=M(a,b_1,x)$ (which is immediate from comparison of the power series representation (\ref{horn}) of the Horn function $\Psi_2$ and the power series representation of $M(a,b,x)$ given in \cite[Eq.\ 13.2.2]{olver}), formula (\ref{hornf}) can be expressed in terms of the confluent hypergeometric function of the first kind when $\lambda_1=0$ or $\lambda_2=0$.
When $\lambda_1=0$ we have that
\begin{align*}
f_{X_1-X_2}(0)&=\mathrm{e}^{-\lambda_2} \beta_1^{-\alpha_1}\beta_2^{-\alpha_2}\bigg(\frac{1}{\beta_1}+\frac{1}{\beta_2}\bigg)^{1-\alpha_1-\alpha_2} \frac{\Gamma(\alpha_1+\alpha_2-1)}{\Gamma(\alpha_1)\Gamma(\alpha_2)}\\
&\quad\times M\bigg(\alpha_1+\alpha_2-1,\alpha_2,\frac{\lambda_2\beta_1}{\beta_1+\beta_2}\bigg),  
\end{align*}
and we get a similar simplification when $\lambda_2=0$. Also, since $M(a,b,0)=1$, we have, for $\lambda_1=\lambda_2=0$,
\begin{align*}
f_{X_1-X_2}(0)=\beta_1^{-\alpha_1}\beta_2^{-\alpha_2}\bigg(\frac{1}{\beta_1}+\frac{1}{\beta_2}\bigg)^{1-\alpha_1-\alpha_2} \frac{\Gamma(\alpha_1+\alpha_2-1)}{\Gamma(\alpha_1)\Gamma(\alpha_2)},   
\end{align*}
which is in agreement with the expression given in the displayed equation below (2.1) of \cite{h21}.
\end{remark}

In the following theorem, we provide asymptotic expansions for the PDFs $f_{X_1-X_2}(x)$ and $f_{X_1+X_2}(x)$ as $x\rightarrow\pm\infty$. For the limit $x\rightarrow\infty$, we are able to state unified asymptotic expansions that hold for both the difference $X_1-X_2$ and the sum $X_1+X_2$ using the $\mp$ and $\pm$ notation: that is we provide asymptotic expansions for $f_{X_1\mp X_2}(x)$ as $x\rightarrow\infty$. 

\begin{theorem}\label{thm3.1} Let $\alpha_1,\alpha_2,\beta_1,\beta_2>0$ and $\lambda_1,\lambda_2\geq0$. For the asymptotic expansions involving the sum $X_1+X_2$ we further suppose that $\beta_1>\beta_2$.

\vspace{3mm}

\noindent 1. Suppose that $\lambda_1\neq0$.
	Then, as $x\rightarrow\infty$,
	\begin{align}\label{thm3.11}
		f_{X_1\mp X_2}(x)&\sim\frac{1}{2\sqrt{\pi}}\bigg(\frac{1}{\beta_2}\pm\frac{1}{\beta_1}\bigg)^{-\alpha_2}\beta_1^{-\alpha_1}\beta_2^{-\alpha_2}\bigg(\frac{\lambda_1}{\beta_1}\bigg)^{(1-2\alpha_1)/4}\exp\bigg(-\lambda_1-\frac{\lambda_2\beta_2}{\beta_2\pm\beta_1}\bigg)\nonumber\\
		&\quad\times x^{(2\alpha_1-3)/4}\exp\bigg(2\sqrt{\frac{\lambda_1x}{\beta_1}}-\frac{x}{\beta_1}\bigg)\sum_{l=0}^{\infty}\frac{c_l(\alpha_1,\alpha_2,\beta_1,\beta_2,\lambda_1,\lambda_2)}{x^{l/2}},
	\end{align}
	where $c_0(\alpha_1,\alpha_2,\beta_1,\beta_2,\lambda_1,\lambda_2)=1$ and
\begin{align}
c_l(\alpha_1,\alpha_2,\beta_1,\beta_2,\lambda_1,\lambda_2)&=
\sum_{j=0}^l\frac{(3/2+j-\alpha_1)_{l-j}(\alpha_1-j-1/2)_{l-j}}{(l-j)!4^{l-j}}\nonumber\\
&\quad\times \bigg(\frac{\beta_1}{\lambda_1}\bigg)^{l/2-j}\bigg(\frac{1}{\beta_1}\pm\frac{1}{\beta_2}\bigg)^{-j} L_j^{(\alpha_2-1)}\bigg(-\frac{\lambda_2\beta_1}{\beta_1\pm\beta_2}\bigg), \quad l\geq1. \label{c1for} 
\end{align}    
	2. Suppose that $\lambda_1=0$. Then, as $x\rightarrow\infty$,
	\begin{align}
		f_{X_1\mp X_2}(x)\sim \frac{\beta_1^{-\alpha_1}\beta_2^{-\alpha_2}}{\Gamma(\alpha_1)}\bigg(\frac{1}{\beta_2}\pm\frac{1}{\beta_1}\bigg)^{-\alpha_2}x^{\alpha_1-1}\exp\bigg(-\frac{x}{\beta_1}-\frac{\lambda_2\beta_2}{\beta_2\pm\beta_1}\bigg)\sum_{k=0}^{\infty}\frac{d_k(\alpha_1,\alpha_2,\beta_1,\beta_2,\lambda_2)}{x^k},\label{thm3.12}
	\end{align}
		where $d_0(\alpha_1,\alpha_2,\beta_1,\beta_2,\lambda_2)=1$ and
	\begin{align}
		d_k(\alpha_1,\alpha_2,\beta_1,\beta_2,\lambda_2)&=(-1)^k(1-\alpha_1)_k\bigg(\frac{1}{\beta_1}\pm\frac{1}{\beta_2}\bigg)^{-k}L_k^{(\alpha_2-1)}\bigg(-\frac{\lambda_2\beta_1}{\beta_1\pm\beta_2}\bigg), \quad k\geq1. \nonumber
	\end{align}
3. Suppose that $\lambda_2\neq0$.
	Then, as $x\rightarrow-\infty$,
	\begin{align}\label{thm3.11aa}
		f_{X_1- X_2}(x)&\sim\frac{1}{2\sqrt{\pi}}\bigg(\frac{1}{\beta_1}+\frac{1}{\beta_2}\bigg)^{-\alpha_1}\beta_1^{-\alpha_1}\beta_2^{-\alpha_2}\bigg(\frac{\lambda_2}{\beta_2}\bigg)^{(1-2\alpha_2)/4}\exp\bigg(-\lambda_2-\frac{\lambda_1\beta_1}{\beta_1+\beta_2}\bigg)\nonumber\\
		&\quad\times |x|^{(2\alpha_2-3)/4}\exp\bigg(2\sqrt{\frac{\lambda_2|x|}{\beta_2}}+\frac{x}{\beta_2}\bigg)\sum_{l=0}^{\infty}\frac{c_l(\alpha_2,\alpha_1,\beta_2,\beta_1,\lambda_2,\lambda_1)}{x^{l/2}}.
	\end{align}
	4. Suppose that $\lambda_2=0$. Then, as $x\rightarrow-\infty$,
	\begin{align}\label{thm3.14}
		f_{X_1-X_2}(x)\sim\frac{\beta_1^{-\alpha_1}\beta_2^{-\alpha_2}}{\Gamma(\alpha_2)}\bigg(\frac{1}{\beta_1}+\frac{1}{\beta_2}\bigg)^{-\alpha_1}|x|^{\alpha_2-1}\exp\bigg(\frac{x}{\beta_2}-\frac{\lambda_1\beta_1}{\beta_1+\beta_2}\bigg)\sum_{k=0}^{\infty}\frac{d_k(\alpha_2,\alpha_1,\beta_2,\beta_1,\lambda_1)}{|x|^k}.
	\end{align}
\end{theorem}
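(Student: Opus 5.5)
Parts 3 and 4 follow from parts 1 and 2 through the relation $X_1-X_2=_d-(X_1'-X_2')$ of Remark \ref{remdiff0}, so the work lies in parts 1 and 2 for $x\rightarrow\infty$. The plan is to condition on $X_2$ and use the fact that, for the difference, $X_2$ is then typically $O(1)$. Writing $f_{X_1\mp X_2}(x)=\int_0^\infty f_{X_1}(x\pm y)f_{X_2}(y)\,\mathrm{d}y$ (for the sum, restricted to $y<x$), I would expand $f_{X_1}(x\pm y)$ for large $x$ uniformly in $y$, with the tail handled by exponential decay. Through the mixture (\ref{mix}) and (\ref{pdfncg}), this reduces everything to a Poisson mixture of Laplace-type integrals against gamma densities. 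An equivalent route is to start from the series (\ref{diff}) and (\ref{sum}) of Theorem \ref{thm1.1}: insert the large-argument expansion of $U$, or for the sum Kummer's transformation $M(a,b,z)=\mathrm{e}^zM(b-a,b,-z)$ together with the large-$|z|$ expansion (this is where $\beta_1>\beta_2$ enters), and then resum over $j$ and $k$. I would carry out the conditioning route and use the series only as a cross-check.

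\textbf{Part 2} ($\lambda_1=0$). Here $f_{X_1}(x\pm y)=\beta_1^{-\alpha_1}(x\pm y)^{\alpha_1-1}\mathrm{e}^{-(x\pm y)/\beta_1}/\Gamma(\alpha_1)$. Expanding $(x\pm y)^{\alpha_1-1}=x^{\alpha_1-1}\sum_k\binom{\alpha_1-1}{k}(\pm y/x)^k$ gives
\[
f\sim\frac{x^{\alpha_1-1}\mathrm{e}^{-x/\beta_1}}{\beta_1^{\alpha_1}\Gamma(\alpha_1)}\sum_k\binom{\alpha_1-1}{k}(\pm1)^kx^{-k}\,\mathbb{E}\bigl[X_2^k\mathrm{e}^{\mp X_2/\beta_1}\bigr].
\]
The tilted moment $\mathbb{E}[X_2^k\mathrm{e}^{-sX_2}]$ is $(-\mathrm{d}/\mathrm{d}s)^k$ of the Laplace transform $(1+\beta_2s)^{-\alpha_2}\exp(-\lambda_2\beta_2s/(1+\beta_2s))$. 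Computing it through the Poisson mixture gives
\[
\sum_m p_{\lambda_2}(m)\frac{(\alpha_2+m)_k\beta_2^{\alpha_2+m+k}}{\beta_2^{\alpha_2+m}(1+\beta_2 s)^{\alpha_2+m+k}}.
\]
Using the generating identity $\sum_m \frac{z^m}{m!}(\alpha_2+m)_k=k!\,\mathrm{e}^{z}L_k^{(\alpha_2-1)}(-z)$ then yields $k!\,L_k^{(\alpha_2-1)}$ times the stated prefactor $(1/\beta_2\pm1/\beta_1)^{-\alpha_2}\mathrm{e}^{-\lambda_2\beta_2/(\beta_2\pm\beta_1)}$. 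Combining this with $\binom{\alpha_1-1}{k}k!=(-1)^k(1-\alpha_1)_k$ gives exactly $d_k$. To justify the termwise expansion, I would split the integral at $y=x^{1/2}$, apply Taylor's theorem with remainder on $y<x^{1/2}$, and bound the rest by the exponential tail of $f_{X_2}$. For the sum, the condition $\beta_1>\beta_2$ guarantees that $\mathrm{e}^{y/\beta_1}f_{X_2}(y)$ still decays, so the same argument applies.

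\textbf{Part 1} ($\lambda_1\ne0$). I would use the standard expansion
\[
I_\nu(z)\sim \frac{\mathrm{e}^z}{\sqrt{2\pi z}}\sum_n\frac{(-1)^n(1/2-\nu)_n(1/2+\nu)_n}{n!\,(2z)^n}.
\]
Applied in (\ref{pdfncg}), it gives
\[
f_{X_1}(u)\sim C\,u^{(2\alpha_1-3)/4}\exp\Bigl(2\sqrt{\lambda_1u/\beta_1}-u/\beta_1\Bigr)\sum_n e_n u^{-n/2}.
\]
Substituting $u=x\pm y$ and expanding in $y/x$ for fixed $y$ gives $\sqrt{x\pm y}=\sqrt x\pm y/(2\sqrt x)+\dots$. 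The correction to the Bessel exponent is therefore only $O(y/\sqrt x)$, and it is expanded as a power series in $x^{-1/2}$. As in part 2, the $y$-integrals become tilted moments expressed through Laguerre polynomials $L_j^{(\alpha_2-1)}$ with $(1/\beta_1\pm1/\beta_2)^{-j}$. Collecting powers of $x^{-1/2}$ should produce $c_l$ as a convolution of the Bessel coefficients $(3/2+j-\alpha_1)_{l-j}(\alpha_1-j-1/2)_{l-j}/((l-j)!4^{l-j})$ with the Laguerre terms.

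I expect the main obstacle to be this bookkeeping: showing that the many cross terms (powers of $y$ from the prefactor, from the exponent shift, and from $u^{-n/2}$) collapse into the clean single convolution (\ref{c1for}), with shifted Bessel order $\alpha_1-j$. My first attempt would avoid brute-force expansion. I would instead rewrite the mixture so that each $X_2$-moment of order $j$ shifts the Bessel index: $(x\pm y)^{(\alpha_1-1)/2}I_{\alpha_1-1}$ is generated by $\sum_j$ of derivatives $\partial_x^j[x^{(\alpha_1-1)/2}I_{\alpha_1-1}(2\sqrt{\lambda_1x/\beta_1})]=(\lambda_1/\beta_1)^{j/2}x^{(\alpha_1-1-j)/2}I_{\alpha_1-1-j}(\cdot)$. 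After absorbing $\mathrm{e}^{\mp y/\beta_1}$, the Taylor series in $y$ then resums exactly, with $j$-th coefficient proportional to $L_j^{(\alpha_2-1)}$. Applying the Bessel expansion to $I_{\alpha_1-1-j}$ directly gives (\ref{c1for}), with factor $(\beta_1/\lambda_1)^{l/2-j}$. Uniformity and remainder estimates follow as in part 2.
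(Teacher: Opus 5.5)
Your argument is correct, but it reaches the key intermediate formula by a different route than the paper.

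\textbf{The paper's route.} The paper works entirely from the double series of Theorem \ref{thm1.1}. It inserts the large-argument expansion (\ref{asyu}) of $U$ term by term; for the sum it uses Kummer's transformation plus (\ref{asym}). It then re-indexes, interchanges the three sums, and resums:
\begin{itemize}
\item the $j$-series as $M(\alpha_2+s,\alpha_2,z_\mp)=\mathrm{e}^{z_\mp}s!\,L_s^{(\alpha_2-1)}(-z_\mp)/(\alpha_2)_s$;
\item the $m$-series as a ${}_0F_1$, i.e.\ a Bessel function $I_{\alpha_1-s-1}$.
\end{itemize}
This gives (\ref{fdiff1}), and (\ref{Iab}) plus a Cauchy product finish the proof.

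\textbf{Your route.} Conditioning on $X_2$ produces exactly the same expansion (\ref{fdiff1}). With $c=\lambda_1/\beta_1$, the identity $\partial_x^s[x^{(\alpha_1-1)/2}I_{\alpha_1-1}(2\sqrt{cx})]=c^{s/2}x^{(\alpha_1-1-s)/2}I_{\alpha_1-1-s}(2\sqrt{cx})$ plays the role of the ${}_0F_1$ resummation. The tilted moments $\mathbb{E}[X_2^s\mathrm{e}^{\mp X_2/\beta_1}]$ play the role of the $M(\alpha_2+s,\alpha_2,\cdot)$ resummation. I checked the constants, including how $(-1)^s$ and $(1/\beta_2-1/\beta_1)^{-s}$ combine into $(1/\beta_1-1/\beta_2)^{-s}$ in the sum case.

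\textbf{What each route buys.} Your remainder control is elementary and genuinely rigorous. On $y\le\sqrt{x}$ you use Taylor's theorem with remainder at intermediate points $\xi$. There $\sqrt{\xi}-\sqrt{x}$ stays bounded, so $\mathrm{e}^{2\sqrt{c\xi}}$ is uniformly comparable to $\mathrm{e}^{2\sqrt{cx}}$, and everything beyond $\sqrt{x}$ is exponentially small. By contrast, the paper applies asymptotic expansions inside infinite sums whose terms are not uniform in $k$ and then rearranges, which is formal. Your route also makes two things transparent:
\begin{itemize}
\item where $\beta_1>\beta_2$ enters: finiteness of the tilt $\mathrm{e}^{X_2/\beta_1}$;
\item why Remark \ref{remexp} holds: for integer $\alpha_1$ the binomial expansion terminates for the difference, while the finite range $\int_0^x$ leaves an exponentially small error for the sum.
\end{itemize}
The paper's route buys economy: once Theorem \ref{thm1.1} is available, the result follows from standard hypergeometric identities with no estimates.

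\textbf{Two corrections.}
\begin{enumerate}
\item Your displayed expansion of $I_\nu$ has a spurious $(-1)^n$. In this normalization it reads $I_\nu(z)\sim\mathrm{e}^z(2\pi z)^{-1/2}\sum_n(1/2-\nu)_n(1/2+\nu)_n/(n!\,(2z)^n)$, as in (\ref{Iab}). For example, $I_{3/2}(z)\sim\mathrm{e}^z(2\pi z)^{-1/2}(1-1/z)$. With your sign you would get an extra $(-1)^{l-j}$ in $c_l$, inconsistent with the coefficients (\ref{c1for}) that you correctly target.
\item The Taylor series of $x^{(\alpha_1-1)/2}I_{\alpha_1-1}(2\sqrt{cx})$ about $x$ has radius of convergence $x$ when $\alpha_1\notin\mathbb{Z}^+$. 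So ``resums exactly'' should be read as truncation plus remainder, which your split at $\sqrt{x}$ supplies.
\end{enumerate}
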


\begin{remark}\label{remdiff}
The asymptotic expansions for the PDF $f_{X_1+X_2}(x)$ of the sum $X_1+X_2$ are made under the assumption $\beta_1>\beta_2$. This assumption is non-restrictive because for the sum $X_1+X_2$ if we are working in the case that $\beta_1\not=\beta_2$ then we may assume without loss of generality that $\beta_1>\beta_2$. We could obtain analogues of our results from Theorem \ref{thm3.1} (and the forthcoming Theorems \ref{thm3.5} and \ref{thm3.7}) for the sum $X_1+X_2$ in the case $\beta_1=\beta_2$ but elect not to do this, because in this case $X_1+X_2$ follows the non-central gamma distribution (see part 1 of Remark \ref{zero}), and asymptotic approximations relating to the non-central gamma distribution have already received attention in the literature; see, for example, \cite{t93}.
\end{remark}

\begin{remark}\label{remexp} When $\alpha_1\in\mathbb{Z}^+$ the asymptotic expansion (\ref{thm3.12}) for the PDF $f_{X_1-X_2}(x)$ is exact, whilst when $\alpha_2\in\mathbb{Z}^+$ the asymptotic expansion (\ref{thm3.14}) for the PDF $f_{X_1-X_2}(x)$ is exact. This follows from comparison to the exact finite series representations (\ref{rprp}) and (\ref{rprp2}) for the PDF $f_{X_1-X_2}(x)$ that are given in Proposition \ref{prop1}. On the other hand, when $\alpha_1\in\mathbb{Z}^+$ the asymptotic expansion (\ref{thm3.12}) for the PDF $f_{X_1+X_2}(x)$ is only exact up to an exponentially small remainder term, as is the case for the asymptotic expansion (\ref{thm3.14}) for the PDF $f_{X_1+X_2}(x)$ when $\alpha_2\in\mathbb{Z}^+$. That in the case of the sum $X_1+X_2$ the asymptotic expansions (\ref{thm3.12}) and (\ref{thm3.14}) are correct up to an exponentially small remainder term when $\alpha_1\in\mathbb{Z}^+$ and $\alpha_2\in\mathbb{Z}^+$, respectively, can be seen by examining the proof of Theorem \ref{thm3.1}, and the numerical results in Table \ref{table4} are consistent with the fact that the asymptotic expansion (\ref{thm3.12}) is correct up to an exponentially small remainder in the case $\alpha_1=1$ (since non-zero but rapidly decreasing relative errors are observed as $x$ increases).
\end{remark}

\begin{remark}   When $\lambda_2=0$ the coefficients $c_l(\alpha_1,\alpha_2,\beta_1,\beta_2,\lambda_1,0)$, $l\geq1$, simplify to
\begin{align*}
c_l(\alpha_1,\alpha_2,\beta_1,\beta_2,\lambda_1,0)
&=\sum_{j=0}^l\frac{(3/2+j-\alpha_1)_{l-j}(\alpha_1-j-1/2)_{l-j}(\alpha_2)_j}{j!(l-j)!4^{l-j}}\bigg(\frac{\beta_1}{\lambda_1}\bigg)^{l/2-j}\bigg(\frac{1}{\beta_1}\pm\frac{1}{\beta_2}\bigg)^{-j},
\end{align*}
since $L_n^{(\alpha)}(0)=(\alpha+1)_n/n!$ (see \cite[Eq.\ 18.6.1]{olver}). 
\end{remark}

\begin{remark}\label{remcd} Using the special values $g_{0,0}(a,b)=1$, $g_{1,1}(a,b)=b/2$, $g_{1,2}(a,b)=a$ and $g_{2,2}(a,b)=b^2/8$ and the explicit formulas
\begin{align*}
L_0^{(\alpha)}(x)=1, \quad L_1^{(\alpha)}(x)=\alpha+1-x, \quad L_2^{(\alpha)}(x)=\frac{1}{2}(\alpha+1)(\alpha+2)-(\alpha+2)x+\frac{1}{2}x^2    
\end{align*}
(see \cite[Eq.\ 18.5.17\textunderscore5]{olver}) we obtain the following explicit formulas for the coefficients $c_l=c_l(\alpha_1,\alpha_2,\beta_1,\beta_2,\lambda_1,\lambda_2)$, $l=1,2$:
	\begin{align}
		c_1&=\frac{(3/2-\alpha_1)(\alpha_1-1/2)}{4}\sqrt{\frac{\beta_1}{\lambda_1}}+\frac{\beta_1\beta_2}{\beta_2\pm\beta_1}\sqrt{\frac{\lambda_1}{\beta_1}}\bigg(\alpha_2+\frac{\lambda_2\beta_1}{\beta_1\pm\beta_2}\bigg), \nonumber \\
		c_2&=\frac{(\alpha_1-3/2)(\alpha_1-5/2)(\alpha_1-1/2)(\alpha_1+1/2)}{32}\frac{\beta_1}{\lambda_1}\nonumber\\
        &\quad-\frac{(\alpha_1-3/2)(\alpha_1-5/2)}{4}\frac{\beta_1\beta_2}{\beta_2\pm\beta_1}\bigg(\alpha_2+\frac{\lambda_2\beta_1}{\beta_1\pm\beta_2}\bigg) \nonumber \\
		&\quad+\frac{\lambda_1}{2\beta_1}\bigg(\frac{\beta_1\beta_2}{\beta_1\pm\beta_2}\bigg)^2\bigg(\alpha_2(\alpha_2+1)+\frac{2(\alpha_2+1)\lambda_2\beta_1}{\beta_1\pm\beta_2}+\frac{\lambda_2^2\beta_1^2}{(\beta_1\pm\beta_2)^2}\bigg). \nonumber
	\end{align}
	Similarly, we obtain the following expressions for the coefficients $d_k=d_k(\alpha_1,\alpha_2,\beta_1,\beta_2,\lambda_2)$, $k=1,2$:
	\begin{align*}
		d_1&=(\alpha_1-1)\frac{\beta_1\beta_2}{\beta_2\pm\beta_1}\bigg(\alpha_2+\frac{\lambda_2\beta_1}{\beta_1\pm\beta_2}\bigg),\\
		d_2&=\frac{(\alpha_1-1)(\alpha_1-2)}{2}\bigg(\frac{\beta_1\beta_2}{\beta_1\pm\beta_2}\bigg)^2\bigg(\alpha_2(\alpha_2+1)+\frac{2(\alpha_2+1)\lambda_2\beta_1}{\beta_1\pm\beta_2}+\frac{\lambda_2^2\beta_1^2}{(\beta_1\pm\beta_2)^2}\bigg).
	\end{align*}
\end{remark}

We now state asymptotic expansions for the tail probabilities of the difference $X_1-X_2$ and the sum $X_1+X_2$. We write $F_{X_1\mp X_2}(x)=\mathbb{P}(X_1\mp X_2\leq x)$ and $\overline{F}_{X_1\mp X_2}(x)=1-F_{X_1\mp X_2}(x)=\mathbb{P}(X_1\mp X_2>x)$.

\begin{theorem}\label{thm3.5} Let $\alpha_1,\alpha_2,\beta_1,\beta_2>0$ and $\lambda_1,\lambda_2\geq0$. For the asymptotic expansions involving the sum $X_1+X_2$ we further suppose that $\beta_1>\beta_2$.

\vspace{3mm}

\noindent 1. Suppose that $\lambda_1\neq0$. Then, as $x\rightarrow\infty$,
	\begin{align}
	\overline{F}_{X_1\mp X_2}(x)&\sim\frac{1}{2\sqrt{\pi}}\bigg(\frac{1}{\beta_2}\pm\frac{1}{\beta_1}\bigg)^{-\alpha_2}\beta_1^{1-\alpha_1}\beta_2^{-\alpha_2}\bigg(\frac{\lambda_1}{\beta_1}\bigg)^{(1-2\alpha_1)/4}\exp\bigg(-\lambda_1-\frac{\lambda_2\beta_2}{\beta_2\pm\beta_1}\bigg)\nonumber\\
	&\quad\times x^{(2\alpha_1-3)/4}\exp\bigg(2\sqrt{\frac{\lambda_1 x}{\beta_1}}-\frac{x}{\beta_1}\bigg)\sum_{p=0}^{\infty}\frac{\gamma_p(\alpha_1,\alpha_2,\beta_1,\beta_2,\lambda_1,\lambda_2)}{x^{p/2}},\label{thm3.61}
	\end{align}
	where $\gamma_0(\alpha_1,\alpha_2,\beta_1,\beta_2,\lambda_1,\lambda_2)=1$ and
	\begin{align}
		\gamma_p(\alpha_1,\alpha_2,\beta_1,\beta_2,\lambda_1,\lambda_2)&=\sum_{\substack{i,j,k,l\geq0 \\ i+2j+k+l=p}}(-1)^{i+j}c_l(\alpha_1,\alpha_2,\beta_1,\beta_2,\lambda_1,\lambda_2)\binom{\alpha_1-1/2-l}{k}\nonumber\\
		&\quad\times\binom{\alpha_1-3/2-l-k-2j}{i}\big((l+k)/2-(2\alpha_1-3)/4\big)_j\nonumber\\
        &\quad\times\beta_1^{j}\big(\lambda_1\beta_1\big)^{(k+i)/2}, \quad p\geq1,\nonumber
	\end{align}
	with $c_l(\alpha_1,\alpha_2,\beta_1,\beta_2,\lambda_1,\lambda_2)$, $l\geq0$, defined as in part 1 of Theorem \ref{thm3.1}.

\vspace{3mm}
    
\noindent	2. Suppose that $\lambda_1=0$. Then, as $x\rightarrow\infty$,
\begin{align}\label{thm3.52}
	\overline{F}_{X_1\mp X_2}(x)&\sim\frac{\beta_1^{1-\alpha_1}\beta_2^{-\alpha_2}}{\Gamma(\alpha_1)}\bigg(\frac{1}{\beta_2}\pm\frac{1}{\beta_1}\bigg)^{-\alpha_2}{x^{\alpha_1-1}}\exp\bigg(-\frac{x}{\beta_1}-\frac{\lambda_2\beta_2}{\beta_2\pm\beta_1}\bigg)\sum_{k=0}^{\infty}\frac{\delta_k(\alpha_1,\alpha_2,\beta_1,\beta_2,\lambda_2)}{x^k},
	\end{align}
	where $\delta_0(\alpha_1,\alpha_2,\beta_1,\beta_2,\lambda_2)=1$ and
	\begin{equation}
	\delta_k(\alpha_1,\alpha_2,\beta_1,\beta_2,\lambda_2)=\sum_{s=0}^{k}(-\beta_1)^{k-s}(k+1-\alpha_1)_{k-s}\,d_s(\alpha_1,\alpha_2,\beta_1,\beta_2,\lambda_2), \quad k\geq1,\nonumber
	\end{equation}
	with $d_s(\alpha_1,\alpha_2,\beta_1,\beta_2,\lambda_2)$, $s\geq0$, defined as in part 2 of Theorem \ref{thm3.1}.

        \vspace{3mm}
    
\noindent	3. Suppose that $\lambda_2\not=0$. Then, as $x\rightarrow-\infty$,
	\begin{align}
	F_{X_1-X_2}(x)&\sim\frac{1}{2\sqrt{\pi}}\bigg(\frac{1}{\beta_1}+\frac{1}{\beta_2}\bigg)^{-\alpha_1}\beta_1^{-\alpha_1}\beta_2^{1-\alpha_2}\bigg(\frac{\lambda_2}{\beta_2}\bigg)^{(1-2\alpha_2)/4}\exp\bigg(-\lambda_2-\frac{\lambda_1\beta_1}{\beta_1+\beta_2}\bigg)\nonumber\\
	&\quad\times |x|^{(2\alpha_2-3)/4}\exp\bigg(2\sqrt{\frac{\lambda_2 |x|}{\beta_2}}+\frac{x}{\beta_2}\bigg)\sum_{p=0}^{\infty}\frac{\gamma_p(\alpha_2,\alpha_1,\beta_2,\beta_1,\lambda_2,\lambda_1)}{|x|^{p/2}}.\label{negtail}
	\end{align}
4. Suppose that $\lambda_2=0$. Then, as $x\rightarrow-\infty$,
	\begin{align}\label{thm3.54}
		F_{X_1-X_2}(x)&\sim\frac{\beta_1^{-\alpha_1}\beta_2^{1-\alpha_2}}{\Gamma(\alpha_2)}\bigg(\frac{1}{\beta_1}+\frac{1}{\beta_2}\bigg)^{-\alpha_1}|x|^{\alpha_2-1}\exp\bigg(\frac{x}{\beta_2}-\frac{\lambda_1\beta_1}{\beta_1+\beta_2}\bigg)\sum_{k=0}^{\infty}\frac{\delta_k(\alpha_2,\alpha_1,\beta_2,\beta_1,\lambda_1)}{|x|^k}.
	\end{align}
\end{theorem}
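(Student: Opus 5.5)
The plan is to integrate the density expansions of Theorem \ref{thm3.1} term by term. We have $\overline{F}_{X_1\mp X_2}(x)=\int_x^\infty f_{X_1\mp X_2}(t)\,\mathrm{d}t$, and for the left tail of the difference $F_{X_1-X_2}(x)=\int_{-\infty}^x f_{X_1-X_2}(t)\,\mathrm{d}t$. Parts 3 and 4 then follow from parts 1 and 2 by the reflection $X_1-X_2=_d-(X_1'-X_2')$ of Remark \ref{remdiff0}, applied exactly as in Theorem \ref{thm3.1}. So it suffices to handle the right tail. Since the expansions in Theorem \ref{thm3.1} are asymptotic with exponentially decaying prefactors, integrating an asymptotic expansion of this type term by term is legitimate. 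Indeed, truncating after $N$ terms leaves a remainder $O(x^{-N/2})$ relative to the leading term, and the integral of that remainder is again of lower order relative to the integral of the leading term. This follows by a Laplace-type or L'Hôpital comparison, because the integrands decay like $\mathrm{e}^{-t/\beta_1}$ times slowly varying factors.

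\textbf{Part 2.} For $\lambda_1=0$ each term has the form $t^{\alpha_1-1-k}\mathrm{e}^{-t/\beta_1}$, and its tail is an upper incomplete gamma function:
\[
\int_x^\infty t^{a-1}\mathrm{e}^{-t/\beta_1}\,\mathrm{d}t=\beta_1^{a}\Gamma(a,x/\beta_1).
\]
I would use the standard expansion
\[
\Gamma(a,z)\sim z^{a-1}\mathrm{e}^{-z}\sum_{m\ge0}(1-a)_m(-1)^m z^{-m}.
\]
With $a=\alpha_1-s$, note that $(-1)^m(1-\alpha_1+s)_m$ equals $(s+1-\alpha_1)_m$ with a sign. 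Collecting the powers $x^{-k}$ with $k=s+m$ then gives $\delta_k$ as the Cauchy product
\[
\delta_k=\sum_s(-\beta_1)^{k-s}(s+1-\alpha_1)_{k-s}\,d_s
\]
up to matching indices. The overall factor $\beta_1$ produces $\beta_1^{1-\alpha_1}$. I would check the Pochhammer index carefully against the stated $(k+1-\alpha_1)_{k-s}$, which may correspond to a reversed ordering of the product.

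\textbf{Part 1.} Write the density as $C\,t^{\mu}\exp(2\sqrt{\lambda_1t/\beta_1}-t/\beta_1)\sum_l c_l t^{-l/2}$ with $\mu=(2\alpha_1-3)/4$. I would substitute $t=u^2$ (or work with $\sqrt t$) so that the exponent $\phi(u)=2\sqrt{\lambda_1/\beta_1}\,u-u^2/\beta_1$ is a quadratic. Each term then becomes an integral of the form $\int_{\sqrt x}^\infty u^{2\mu+1-l}\mathrm{e}^{\phi(u)}\,\mathrm{d}u$.

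A single tail integral of this kind has an expansion that can be generated by repeated integration by parts. Writing
\[
\mathrm{e}^{\phi}=\frac{(\mathrm{e}^{\phi})'}{\phi'}, \qquad -\phi'(u)=\frac{2u}{\beta_1}\Big(1-\frac{\sqrt{\lambda_1\beta_1}}{u}\Big),
\]
we expand $1/(-\phi')$ as a binomial series in $\sqrt{\lambda_1\beta_1}/u$. This is the source of the factors $\binom{\cdot}{i}(\lambda_1\beta_1)^{i/2}$. The repeated differentiation of the power $u^{2\mu-l-\dots}$ gives rising-factorial factors $((l+k)/2-\mu)_j\beta_1^j$ with sign $(-1)^j$.

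Alternatively, and more cleanly, I would aim for the direct ansatz
\[
\overline F(x)=C\beta_1x^{\mu}\mathrm{e}^{\phi(\sqrt x)}\sum_p\gamma_p x^{-p/2}.
\]
Differentiating, requiring $-\frac{\mathrm d}{\mathrm dx}$ to reproduce the density series, and solving recursively for $\gamma_p$ gives the coefficients. The leading order check is quick: the derivative of the exponent is $-1/\beta_1+\sqrt{\lambda_1/(\beta_1x)}$, so the leading coefficient picks up the factor $\beta_1$, matching $\beta_1^{1-\alpha_1}$.

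\textbf{Main obstacle.} The hard part is the combinatorial bookkeeping showing that the recursively determined coefficients coincide with the closed quadruple sum for $\gamma_p$, with the binomials $\binom{\alpha_1-1/2-l}{k}$ and $\binom{\alpha_1-3/2-l-k-2j}{i}$. I would first prove the expansion for a single term $x^{\mu-l/2}\mathrm{e}^{\phi(\sqrt x)}$, obtaining a triple sum over $(i,j,k)$ via integration by parts and binomial expansion. Then I would sum over $l$ with weights $c_l$, and finally verify the low-order cases $p=1,2$ numerically against the recursion.
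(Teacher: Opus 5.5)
Your route is the paper's. Its proof integrates the expansions of Theorem~\ref{thm3.1} term by term. It quotes Lemma~\ref{lemma5.1} from \cite{gz25} for the two kinds of tail integral: $a=1/\beta_1$, $b=2\sqrt{\lambda_1/\beta_1}$, $m=(2\alpha_1-3)/4$ in part 1, and $a=1/\beta_1$, $m=\alpha_1-1$ in part 2. Parts 3 and 4 then follow by reflection. So you are effectively re-proving that lemma, and two points need attention.

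\textbf{Part 2: your formula is right and the stated one is wrong.} Your coefficient $(s+1-\alpha_1)_{k-s}$ is correct. It cannot be matched to the stated $(k+1-\alpha_1)_{k-s}$ by reordering: reversing your product gives $(-1)^{k-s}(\alpha_1-k)_{k-s}$, not the stated factor.

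A decisive test is $\alpha_1=1$, $\lambda_1=0$. By memorylessness, $\overline F_{X_1-X_2}(x)=\mathrm{e}^{-x/\beta_1}\,\mathbb{E}[\mathrm{e}^{-X_2/\beta_1}]$ exactly for $x\ge0$, and this equals the leading term of (\ref{thm3.52}). By uniqueness of asymptotic expansions, $\delta_k=0$ for all $k\ge1$. Your formula gives this, since $d_s=0$ for $s\ge1$ and $(1-\alpha_1)_k=0$. The stated formula gives $\delta_1=-\beta_1$ and $\delta_2=6\beta_1^2$.

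The slip comes from part 2 of Lemma~\ref{lemma5.1} as quoted, where $(k-m)_{k-j}$ should read $(j-m)_{k-j}$. Hence parts 2 and 4 need correcting, as do the explicit $\delta_1,\delta_2$ in the remark after the theorem; for example, $(\alpha_1-2)\beta_1$ in $\delta_1$ should be $(\alpha_1-1)\beta_1$. Your argument proves the corrected statement.

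\textbf{Part 1: the closed form is not yet proved.} The gap is the one you name. Checking $p=1,2$ numerically does not establish the quadruple sum for general $p$, and integration by parts with a binomial expansion of $1/(-\phi')$ does not naturally produce it. The sum drops out directly if you complete the square instead.

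With $t=u^2$ and $B=\sqrt{\lambda_1\beta_1}$ one has $2\sqrt{\lambda_1t/\beta_1}-t/\beta_1=\lambda_1-(u-B)^2/\beta_1$. After substituting $v=u-B$, the $l$-th term becomes $2\mathrm{e}^{\lambda_1}\int_{\sqrt x-B}^\infty(v+B)^{\alpha_1-1/2-l}\mathrm{e}^{-v^2/\beta_1}\,\mathrm{d}v$. Then:
\begin{enumerate}
\item Expanding $(v+B)^{\alpha_1-1/2-l}$ binomially (convergent for $v>B$) gives the index $k$ and the factor $\binom{\alpha_1-1/2-l}{k}B^k$.
\item Each integral $\int_V^\infty v^{c}\mathrm{e}^{-v^2/\beta_1}\,\mathrm{d}v$, with $c=\alpha_1-1/2-l-k$, is an incomplete gamma function with expansion $\frac{\beta_1}{2}V^{c-1}\mathrm{e}^{-V^2/\beta_1}\sum_j(-1)^j((1-c)/2)_j\beta_1^jV^{-2j}$. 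This gives the index $j$, and $(1-c)/2=(l+k)/2-(2\alpha_1-3)/4$.
\item Expanding $V^{c-1-2j}=(\sqrt x-B)^{\alpha_1-3/2-l-k-2j}$ binomially gives the index $i$ with the factor $(-B)^i$.
\end{enumerate}
Finally, $\mathrm{e}^{\lambda_1-(\sqrt x-B)^2/\beta_1}=\mathrm{e}^{2\sqrt{\lambda_1x/\beta_1}-x/\beta_1}$, the prefactors combine as $2\cdot\beta_1/2=\beta_1$, and $B^{k+i}=(\lambda_1\beta_1)^{(k+i)/2}$. Collecting powers $x^{-(i+2j+k+l)/2}$ then reproduces $\gamma_p$ exactly, so part 1 is correct as stated. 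Only finitely many terms feed each power, so truncation is controlled as in your remark on term-by-term integration.
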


\begin{remark} The coefficients $\gamma_k=\gamma_k(\alpha_1,\alpha_2,\beta_1,\beta_2,\lambda_1,\lambda_2)$ and $\delta_k=\delta_k(\alpha_1,\alpha_2,\beta_1,\beta_2,\lambda_2)$, $k=1,2$ can be expressed explicitly as follows:
	\begin{align*}
		\gamma_1&=c_1+\sqrt{\lambda_1\beta_1},\\
\gamma_2&=c_2+c_1\sqrt{\lambda_1\beta_1}+\beta_1(2\alpha_1-3)/4+\lambda_1\beta_1,
	\end{align*}
where expressions for $c_1$ and $c_2$ are given in Remark \ref{remcd}, and 
\begin{align*}
		\delta_1&=(\alpha_1-2)\beta_1+(\alpha_1-1)\frac{\beta_1\beta_2}{\beta_2\pm\beta_1}\bigg(\alpha_2+\frac{\lambda_2\beta_1}{\beta_1\pm\beta_2}\bigg),\\
	\delta_2&=(\alpha_1-3)(\alpha_1-4)\beta_1^2+(\alpha_1-1)(\alpha_1-3)\frac{\beta_1^2\beta_2}{\beta_2\pm\beta_1}\bigg(\alpha_2+\frac{\lambda_2\beta_1}{\beta_1\pm\beta_2}\bigg)\\
	&\quad+\frac{(\alpha_1-1)(\alpha_1-2)\beta_1^2\beta_2^2}{2(\beta_1\pm\beta_2)^2}\bigg(\alpha_2(\alpha_2+1)+\frac{2(\alpha_2+1)\lambda_2\beta_1}{\beta_1\pm\beta_2}+\frac{\lambda_2^2\beta_1^2}{(\beta_1\pm\beta_2)^2}\bigg).
\end{align*}
\end{remark}

For $0<p<1$, denote the quantile function of $X_1\mp X_2$ by $Q_{X_1\mp X_2}(p)=F^{-1}_{X_1\mp X_2}(p)$. In the following theorem, we present asymptotic approximations for the quantile function.

\begin{theorem}\label{thm3.7}Let $\alpha_1,\alpha_2,\beta_1,\beta_2>0$, $\lambda_1,\lambda_2\geq0$ and also let $q=1-p$. For the asymptotic approximations involving the sum $X_1+X_2$ we further suppose that $\beta_1>\beta_2$. 

\vspace{3mm}

\noindent	1. Suppose that $\lambda_1\neq0$. Then, as $p\rightarrow1$,
	\begin{align}\label{thm3.71}
	Q_{X_1\mp X_2}(p)&=\beta_1\bigg(\ln(1/q)+2\sqrt{\lambda_1}\sqrt{\ln(1/q)}+\frac{2\alpha_1-3}{4}\ln(\ln(1/q))\nonumber\\
	&\quad+\ln\bigg(\frac{1}{2\sqrt{\pi}}\bigg(\frac{1}{\beta_2}\pm\frac{1}{\beta_1}\bigg)^{-\alpha_2}\beta_2^{-\alpha_2}\lambda_1^{(1-2\alpha_1)/4}\bigg)+\lambda_1-\frac{\lambda_2\beta_2}{\beta_2\pm\beta_1}\nonumber\\
	&\quad+\frac{(2\alpha_1-3)\sqrt{\lambda_1}}{4}\frac{\ln(\ln(1/q))}{\sqrt{\ln(1/q)}}\bigg)+O\bigg(\frac{1}{\sqrt{\ln(1/q)}}\bigg).
	\end{align}
	2. Suppose that $\lambda_1=0$. Then, as $p\rightarrow1$,
	\begin{align}\label{thm3.72}
		Q_{X_1\mp X_2}(p)&=\beta_1\bigg(\ln(1/q)+(\alpha_1-1)\ln(\ln(1/q))+\ln\bigg(\frac{\big(1\pm\beta_2/\beta_1\big)^{-\alpha_2}}{\Gamma(\alpha_1)}\bigg)-\frac{\lambda_2\beta_2}{\beta_2\pm\beta_1}\bigg)\nonumber\\
		&\quad+O\bigg(\frac{1}{{\ln(1/q)}}\bigg).
	\end{align}
3. Suppose that $\lambda_2\not=0$. Then, as $p\rightarrow0$,
\begin{align}
	Q_{X_1- X_2}(p)&=-\beta_2\bigg(\ln(1/p)+2\sqrt{\lambda_2}\sqrt{\ln(1/p)}+\frac{2\alpha_2-3}{4}\ln(\ln(1/p))\nonumber\\
	&\quad+\ln\bigg(\frac{1}{2\sqrt{\pi}}\bigg(\frac{1}{\beta_1}+\frac{1}{\beta_2}\bigg)^{-\alpha_1}\beta_1^{-\alpha_1}\lambda_2^{(1-2\alpha_2)/4}\bigg)-\frac{\lambda_1\beta_1}{\beta_1+\beta_2}\nonumber\\
	&\quad+\frac{(2\alpha_2-3)\sqrt{\lambda_2}}{4}\frac{\ln(\ln(1/p))}{\sqrt{\ln(1/p)}}\bigg)+O\bigg(\frac{1}{\sqrt{\ln(1/p)}}\bigg). \label{qq00}
	\end{align}
	4. Suppose that $\lambda_2=0$. Then, as $p\rightarrow0$,
	\begin{align}\label{thm3.74}
		Q_{X_1-X_2}(p)&=-\beta_2\bigg(\ln(1/p)+(\alpha_2-1)\ln(\ln(1/p))+\ln\bigg(\frac{\big({\beta_1}/\beta_2+1\big)^{-\alpha_1}}{\Gamma(\alpha_2)}\bigg)-\frac{\lambda_1\beta_1}{\beta_1+\beta_2}\bigg)\nonumber\\
		&\quad+O\bigg(\frac{1}{{\ln(1/p)}}\bigg).
	\end{align}
\end{theorem}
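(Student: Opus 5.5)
The plan is to invert the tail expansions of Theorem \ref{thm3.5} by the standard iteration used for quantiles of distributions with exponential-type tails. I treat part 1 in detail; parts 3 and 4 then follow from the relation $X_1-X_2=_d-(X_1'-X_2')$ of Remark \ref{remdiff0}, since $Q_{X_1-X_2}(p)=-Q_{X_1'-X_2'}(1-p)$. Part 2 is the simpler analogue of part 1.

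\textbf{Setting up the equation.} Write $x=Q_{X_1\mp X_2}(p)$, so $\overline{F}_{X_1\mp X_2}(x)=q$, and $x\to\infty$ as $q\to0$. Keeping only the leading term of (\ref{thm3.61}) with multiplicative error $1+O(x^{-1/2})$ and taking logarithms gives
\begin{equation*}
\ln(1/q)=\frac{x}{\beta_1}-2\sqrt{\frac{\lambda_1x}{\beta_1}}-\frac{2\alpha_1-3}{4}\ln x-\ln C+O(x^{-1/2}),
\end{equation*}
where $C$ is the constant prefactor in (\ref{thm3.61}). Set $u=x/\beta_1$ and $L=\ln(1/q)$. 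Absorbing the powers of $\beta_1$ into the constant, this becomes
\begin{equation*}
u-2\sqrt{\lambda_1}\sqrt{u}-\tfrac{2\alpha_1-3}{4}\ln u=L+\ln\tilde C+O(u^{-1/2}).
\end{equation*}
Here $\ln\tilde C$ is, up to sign, the collection of constants appearing in (\ref{thm3.71}), namely $\ln(2\sqrt{\pi}\cdots)$, $-\lambda_1$ and $\lambda_2\beta_2/(\beta_2\pm\beta_1)$. I will check this bookkeeping carefully, because the $\beta_1$ powers must cancel to give the stated constant.

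\textbf{Iterative inversion.} First, $u\sim L$. Next, substitute $u=L+v$ with $v=o(L)$, giving
\begin{equation*}
v=2\sqrt{\lambda_1}\sqrt{L+v}+\tfrac{2\alpha_1-3}{4}\ln(L+v)+\mathrm{const}+O(L^{-1/2}).
\end{equation*}
This yields $v=O(\sqrt L)$, and therefore $\sqrt{L+v}=\sqrt L+v/(2\sqrt L)+O(v^2L^{-3/2})$. Feeding the first-order $v=2\sqrt{\lambda_1 L}+O(\ln L)$ back in, the term $v/(2\sqrt L)$ times $2\sqrt{\lambda_1}$ contributes $2\lambda_1$ plus
\begin{equation*}
\sqrt{\lambda_1}\Big(\tfrac{2\alpha_1-3}{4}\ln L+\mathrm{const}\Big)/\sqrt L .
\end{equation*}
This produces the $\ln\ln$ over $\sqrt{\ln}$ correction term of (\ref{thm3.71}). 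The constant $2\lambda_1$ combines with $-\lambda_1$ from $\tilde C$ to give $+\lambda_1$. The expansion $\ln(L+v)=\ln L+O(L^{-1/2})$ is harmless. All remaining terms are $O(L^{-1/2})$, and multiplying back by $\beta_1$ gives (\ref{thm3.71}).

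\textbf{Part 2 and the main obstacle.} For part 2, the same procedure applied to (\ref{thm3.52}) gives $u-(\alpha_1-1)\ln u=L+\mathrm{const}+O(1/u)$. Hence $u=L+(\alpha_1-1)\ln L+\mathrm{const}+O(\ln L/L)$. The stated error is $O(1/L)$, so I will need one more iteration to verify that the $\ln L/L$ term is genuinely present or absent. If it is present, it would have to be absorbed differently; I expect that $(\alpha_1-1)^2\ln L/L$ appears and would check whether the claimed bound holds as written.

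The main technical point is justifying the iteration rigorously. To do this I will note that $g(u)=u-2\sqrt{\lambda_1u}-c\ln u$ is eventually increasing with $g'(u)\to1$. Then $|u-u^*|\le 2|g(u)-g(u^*)|$ for the candidate $u^*$, so the $O(\cdot)$ error in the equation transfers directly to an error of the same order in $u$.
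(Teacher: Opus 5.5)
Your part 1 argument is correct. It differs from the paper only in being self-contained: the paper substitutes $a=1/\beta_1$, $b=2\sqrt{\lambda_1/\beta_1}$, $m=(2\alpha_1-3)/4$, $z=q$ and the prefactor $A$ of (\ref{thm3.61}) into Lemma \ref{lem5.2}.

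In that notation:
\begin{itemize}
\item your $2\lambda_1$ is $b^2/(2a)$;
\item your $\sqrt{\lambda_1}\,\frac{2\alpha_1-3}{4}\ln L/\sqrt L$ is $\frac{bm}{2\sqrt a}\frac{\ln L}{\sqrt L}$;
\item your $\ln\tilde C$ is $\ln(A/a^m)$.
\end{itemize}
The power of $\beta_1$ in $\ln(A/a^m)$ is $1-\alpha_1-\frac{1-2\alpha_1}{4}+\frac{2\alpha_1-3}{4}=0$, so the bookkeeping closes. With the correct signs (your list was muddled),
\[
\ln\tilde C=\ln\big(\tfrac{1}{2\sqrt\pi}(\tfrac{1}{\beta_2}\pm\tfrac{1}{\beta_1})^{-\alpha_2}\beta_2^{-\alpha_2}\lambda_1^{(1-2\alpha_1)/4}\big)-\lambda_1-\tfrac{\lambda_2\beta_2}{\beta_2\pm\beta_1}.
\]
Your mean-value step is a sound rigorous replacement for the lemma: first get $u\sim L$, so that $O(u^{-1/2})=O(L^{-1/2})$, then use $g'\ge 1/2$ eventually. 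Quoting the lemma only buys brevity.

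When you deduce part 3 by reflection, you will get an extra $+\lambda_2$ inside the bracket. It comes from $b^2/(2a)=2\lambda_2$ against the $-\lambda_2$ in the prefactor of (\ref{negtail}), mirroring the $+\lambda_1$ in (\ref{thm3.71}). This term is missing from (\ref{qq00}) as printed. That is a typo in the statement, not a flaw in your route.

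On part 2, your suspicion is right, and it should be resolved against the statement rather than absorbed. Put $m=\alpha_1-1$ and $u=L+v$. One more iteration of $v=m\ln(L+v)+c+O(1/L)$ gives
\[
u=L+m\ln L+c+m^2\,\frac{\ln L}{L}+O\Big(\frac{1}{L}\Big).
\]
The correction $\delta_1/x$ from (\ref{thm3.52}) enters only at order $1/L$, so nothing cancels the $m^2\ln L/L$ term.

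Consequently, (\ref{thm3.72}) holds with error $O(\ln\ln(1/q)/\ln(1/q))$, not $O(1/\ln(1/q))$, unless $\alpha_1=1$. Alternatively, $O(1/\ln(1/q))$ is recovered if you add the term $\beta_1(\alpha_1-1)^2\ln\ln(1/q)/\ln(1/q)$. The same applies to (\ref{thm3.74}), with $\alpha_2$ and $p$ in place of $\alpha_1$ and $q$.

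The paper does not avoid this problem. It takes the $O(1/\ln(1/z))$ bound from the final sentence of Lemma \ref{lem5.2}, which is false whenever $m\neq0$. Already $xe^{-x}=z$, i.e.\ $A=a=m=1$, $b=0$, $g\equiv0$, has solution $x_*=-W_{-1}(-z)=L+\ln L+\ln L/L+O(\ln^2 L/L^2)$, where $W_{-1}$ is the lower branch of the Lambert function. So for parts 2 and 4, the correct conclusion of your argument is the weaker error bound, and that is the right statement.
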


\begin{remark}
In the case $\lambda_1\not=0$, the asymptotic approximation (\ref{thm3.71}) includes the first five terms in the asymptotic expansion of the quantile function $Q_{X_1\mp X_2}(p)$ as $p\rightarrow1$, whilst in the case $\lambda_1=0$ the asymptotic approximation (\ref{thm3.72}) contains the first three terms in the asymptotic expansion. These asymptotic approximations were derived using just the leading-order term in the asymptotic expansions (\ref{thm3.61}) and (\ref{thm3.52}) for the tail probabilities $P(X_1\mp X_2>x)$, as given in Theorem \ref{thm3.5}. Further terms in the asymptotic expansions of the quantile function $Q_{X_1\mp X_2}(p)$ as $p\rightarrow1$ could be obtained by including further terms in the asymptotic expansions (\ref{thm3.61}) and (\ref{thm3.52}) for the tail probabilities in the derivation. Similar comments apply to the asymptotic approximations (\ref{qq00}) and (\ref{thm3.74}) for the quantile function $Q_{X_1-X_2}(p)$ as $p\rightarrow0$. 
\end{remark}

\section{Special case: the product of correlated normal random variables}

Let $(X,Y)$ follow the bivariate normal distribution with mean vector $(\mu_X,\mu_Y)\in\mathbb{R}^2$, variances $\sigma_X^2,\sigma_Y^2>0$ and correlation coefficient $\rho\in(-1,1)$. Denote the product by $Z=XY$, and also let $S_n=\sum_{i=1}^nZ_i$, where $Z_1,\ldots,Z_n$ are independent copies of $Z$. Also, for sake of simpler expressions, set $r_X=\mu_X/\sigma_X$ and $r_Y=\mu_Y/\sigma_Y$. Asymptotic expansions for the PDF of the sum $S_n$ as $x\rightarrow\pm\infty$ were obtained by \cite[Theorem 3.1]{gz25}, although, for general parameter values, the coefficients were not given in a fully explicit form, which made implementation of the asymptotic expansions of the PDF, and in turn the asymptotic expansions for the tail probabilities, more challenging. Previously, the leading-order asymptotic behaviour of the PDF of the product $Z$ as $x\rightarrow\pm\infty$ was established by \cite{gz23}, whilst the leading-order asymptotic behaviour of the PDF and tail probabilities of the product of $n$ independent normal random variables has been studied by \cite{cs26,g17,l23}.


We now deduce asymptotic expansions for the PDF of the sum $S_n$ from the asymptotic expansions for the PDF of the non-central gamma distribution given in Theorem \ref{thm3.1}. We begin by recalling the following representation of $S_n$ in terms of the difference of two independent scaled non-central chi-square random variables. Theorem 2.1 of \cite{g25} states that
\begin{align*}
S_n=_d\frac{s}{2}(1+\rho)V_1-\frac{s}{2}(1-\rho)V_2,    
\end{align*}
where $V_1\sim \chi_n'^2(\lambda_+)$ and $V_2\sim\chi_n'^2(\lambda_-)$ are independent non-central chi-square random variables with non-centrality parameters
\begin{align*}
\lambda_+=\frac{n(r_X+ r_Y)^2}{2(1+ \rho)}, \quad  \lambda_-=\frac{n(r_X- r_Y)^2}{2(1- \rho)}.  
\end{align*}
Applying the distributional relation (\ref{chirep}) we therefore have that
\begin{align}\label{snsnsn}
S_n\sim\mathrm{NCGD}\bigg(\frac{n}{2},\frac{n}{2},s(1+\rho),s(1-\rho),\frac{n(r_X+r_Y)^2}{4(1+\rho)},\frac{n(r_X-r_Y)^2}{4(1-\rho)}\bigg).    
\end{align}
From the distributional relation (\ref{snsnsn}) we deduce the following asymptotic expansions for the PDF of $S_n$ directly form the asymptotic expansions of Theorem \ref{thm3.1}.

\begin{corollary}\label{corpn} 
1. Suppose that $r_X+r_Y\not=0$. Then, as $x\rightarrow\infty$,
\begin{align}f_{S_n}(x)&\sim C(r_X,r_Y,\rho,n)\,x^{(n-3)/4}\exp\bigg(\frac{|r_X+r_Y|}{1+\rho}\sqrt{\frac{nx}{s}}-\frac{x}{s(1+\rho)}\bigg)\sum_{l=0}^\infty c_l(r_X,r_Y,\rho,n)\frac{s^{l/2}}{x^{l/2}}, \label{expansion11}
\end{align}
where 
\begin{align*}
C(r_X,r_Y,\rho,n)&= \frac{s^{-(n+1)/4}}{2\sqrt{2\pi}}\bigg(\frac{1+\rho}{|r_X+r_Y|\sqrt{n}}\bigg)^{(n-1)/2}\\
&\quad\times\exp\bigg(-\frac{n}{4(1+\rho)}(r_X+r_Y)^2\bigg)\exp\bigg(-\frac{n}{8}(r_X-r_Y)^2\bigg),   
\end{align*}
and $c_0(r_X,r_Y,\rho,n)=1$ and, for $l\geq1$,
\begin{align*}
c_l(r_X,r_Y,\rho,n)&=\frac{1}{2^ln^{l/2}}\bigg(\frac{1+\rho}{|r_X+r_Y|}\bigg)^l\sum_{j=0}^l\frac{(3/2-n/2+j)_{l-j}(n/2-1/2-j)_{l-j}}{(l-j)!}\nonumber\\
&\quad\times \bigg(\frac{n}{2}\bigg(\frac{1-\rho}{1+\rho}\bigg)(r_X+r_Y)^2\bigg)^j L_j^{(n/2-1)}\bigg(-\frac{n}{8}\bigg(\frac{1+\rho}{1-\rho}\bigg)(r_X-r_Y)^2\bigg).   
\end{align*}

\vspace{3mm}

\noindent 2. Suppose that $r_X+r_Y=0$. Then, as $x\rightarrow\infty$,  
\begin{align}f_{S_n}(x)\sim \frac{\mathrm{e}^{-nr_X^2/2}}{(2s)^{n/2}\Gamma(n/2)} x^{n/2-1}\exp\bigg\{-\frac{x}{s(1+\rho)}\bigg\}\sum_{k=0}^\infty d_k(r_X,\rho,n)\frac{s^k}{x^k}, \label{expan11}
\end{align}  
where $d_0(r_X,\rho,n)=1$ and, for $k\geq1$,
\begin{align*}
d_k(r_X,\rho,n)=(-1)^k\bigg(1-\frac{n}{2}\bigg)_k\bigg(\frac{1-\rho^2}{2}\bigg)^kL_k^{(n/2-1)}\bigg(-\frac{n}{2}\bigg(\frac{1+\rho}{1-\rho}\bigg)r_X^2\bigg).    
\end{align*}

\vspace{3mm}

\noindent 3. Suppose that $r_X-r_Y\not=0$. Then, an asymptotic expansion for $f_{S_n}(x)$ as $x\rightarrow-\infty$ is obtained by replacing $(r_Y,\rho,x)$ by $(-r_Y,-\rho,-x)$ in the asymptotic expansion (\ref{expansion11}).

\vspace{3mm}

\noindent 4. Suppose that $r_X-r_Y=0$. Then, an asymptotic expansion for $f_{S_n}(x)$ as $x\rightarrow-\infty$ is obtained by replacing $(\rho,x)$ by $(-\rho,-x)$ in the asymptotic expansion (\ref{expan11}).
\end{corollary}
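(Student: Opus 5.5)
This is a direct substitution into Theorem \ref{thm3.1}. By (\ref{snsnsn}), $S_n$ has the $\mathrm{NCGD}$ law with
\[\alpha_1=\alpha_2=n/2,\quad \beta_1=s(1+\rho),\quad \beta_2=s(1-\rho),\quad \lambda_1=\frac{n(r_X+r_Y)^2}{4(1+\rho)},\quad \lambda_2=\frac{n(r_X-r_Y)^2}{4(1-\rho)}.\]
So $f_{S_n}=f_{X_1-X_2}$, and we use the lower signs in Theorem \ref{thm3.1}, which are those of the difference. The condition $r_X+r_Y\neq0$ is exactly $\lambda_1\neq0$, so part 1 of the corollary follows from part 1 of Theorem \ref{thm3.1} and part 2 from part 2. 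The proof consists of checking that the constants and coefficients simplify to the stated forms.

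\textbf{Part 1.} The following simplifications carry it out.
\begin{itemize}
\item The basic combinations are $\frac1{\beta_2}+\frac1{\beta_1}=\frac{2}{s(1-\rho^2)}$ and $\frac{\beta_2}{\beta_1+\beta_2}=\frac{1-\rho}{2}$.
\item The non-centrality term becomes $\frac{\lambda_2\beta_2}{\beta_1+\beta_2}=\frac{n(r_X-r_Y)^2}{8}$.
\item The square root becomes $\sqrt{\lambda_1/\beta_1}=\frac{|r_X+r_Y|}{1+\rho}\sqrt{n/(4s)}$, so $2\sqrt{\lambda_1x/\beta_1}=\frac{|r_X+r_Y|}{1+\rho}\sqrt{nx/s}$. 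This gives the exponent in (\ref{expansion11}), and the power of $x$ is $(2\alpha_1-3)/4=(n-3)/4$.
\item For the prefactor, collect $\beta_1^{-n/2}\beta_2^{-n/2}\big(\frac{2}{s(1-\rho^2)}\big)^{-n/2}=2^{-n/2}$ against $(\lambda_1/\beta_1)^{(1-n)/4}$. Tracking the powers of $s$ should give $s^{-(n+1)/4}$ and the factor $\big((1+\rho)/(|r_X+r_Y|\sqrt n)\big)^{(n-1)/2}$.
\item The powers of $2$ need care so that $\frac{1}{2\sqrt{\pi}}$ becomes $\frac{1}{2\sqrt{2\pi}}$.
\end{itemize}

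For the coefficients, substitute into (\ref{c1for}):
\[\Big(\frac{\beta_1}{\lambda_1}\Big)^{l/2-j}=\Big(\frac{4s(1+\rho)^2}{n(r_X+r_Y)^2}\Big)^{l/2-j},\qquad \Big(\frac1{\beta_1}+\frac1{\beta_2}\Big)^{-j}=\Big(\frac{s(1-\rho^2)}{2}\Big)^j,\]
and the Laguerre argument is $-\frac{\lambda_2\beta_1}{\beta_1+\beta_2}=-\frac{n}{8}\frac{1+\rho}{1-\rho}(r_X-r_Y)^2$. Pulling out $s^{l/2}$ and the factor $\big(\frac{1+\rho}{|r_X+r_Y|}\big)^l 2^{l}n^{-l/2}$, the $j$-dependent remainder should combine with $4^{-(l-j)}$ into $\frac{1}{2^l}\big(\frac n2\frac{1-\rho}{1+\rho}(r_X+r_Y)^2\big)^j$ times the normalisation shown.

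\textbf{Part 2.} Here $\lambda_1=0$ and $r_Y=-r_X$, so $\lambda_2=\frac{n r_X^2}{1-\rho}$ and $\frac{\lambda_2\beta_2}{\beta_1+\beta_2}=nr_X^2/2$. The prefactor reduces to $\frac{(s(1-\rho^2))^{-n/2}}{\Gamma(n/2)}\big(\frac{2}{s(1-\rho^2)}\big)^{-n/2}=\frac{(2s)^{-n/2}}{\Gamma(n/2)}$, and $d_k$ follows from $(1/\beta_1+1/\beta_2)^{-k}=s^k\big((1-\rho^2)/2\big)^k$ together with the Laguerre argument $-\frac{n}{2}\frac{1+\rho}{1-\rho}r_X^2$.

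\textbf{Parts 3 and 4.} Rather than repeating the computation with parts 3 and 4 of Theorem \ref{thm3.1}, we use symmetry: $-S_n$ is a sum of copies of $X(-Y)$, whose parameters are $(r_X,-r_Y,-\rho)$. Hence $f_{S_n}(x)=f_{S_n'}(-x)$, where $S_n'$ is built with $(r_Y,\rho)$ replaced by $(-r_Y,-\rho)$. Under this map $r_X+r_Y$ becomes $r_X-r_Y$, so parts 1 and 2 applied to $S_n'$ give the claims.

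The only real obstacle is the bookkeeping of the powers of $2$, $s$ and $n$ in the constant $C$ and in $c_l$. I would check this against $l=1$ and against Remark \ref{remcd}.
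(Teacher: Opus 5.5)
Your route is the paper's: substitute the parameters of \eqref{snsnsn} into Theorem~\ref{thm3.1}. For parts 3--4, your reflection $Y\mapsto -Y$ (so $(r_Y,\rho)\mapsto(-r_Y,-\rho)$ with $s$ unchanged) is the normal-product version of Remark~\ref{remdiff0}. It swaps $(\beta_1,\lambda_1)\leftrightarrow(\beta_2,\lambda_2)$ in \eqref{snsnsn}, so it is equivalent to using parts 3--4 of Theorem~\ref{thm3.1}. It also means you never have to read $x^{l/2}$ in \eqref{thm3.11aa} as $|x|^{l/2}$.

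The proof here is nothing but bookkeeping, and some of your stated identities are false; followed literally, they give the wrong constants.

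\textbf{Sign label.} The difference uses the \emph{upper} signs in $\mp$/$\pm$, not the lower ones. You did compute with $\frac{1}{\beta_1}+\frac{1}{\beta_2}$, which is correct.

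\textbf{Prefactor in part 1.} Since $\beta_1\beta_2=s^2(1-\rho^2)$,
\[
\beta_1^{-n/2}\beta_2^{-n/2}\Big(\frac{2}{s(1-\rho^2)}\Big)^{-n/2}=(2s)^{-n/2},
\]
not $2^{-n/2}$. With your value, the power of $s$ in $C$ would come out as $s^{(n-1)/4}$. With the correct value and
\[
(\lambda_1/\beta_1)^{(1-n)/4}=2^{(n-1)/2}s^{(n-1)/4}\Big(\frac{1+\rho}{|r_X+r_Y|\sqrt n}\Big)^{(n-1)/2},
\]
the prefactor is $\frac{1}{2\sqrt\pi}\,2^{-1/2}s^{-(n+1)/4}$, as claimed.

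\textbf{Prefactor in part 2.} The same slip appears: $(s(1-\rho^2))^{-n/2}$ should be $(s^2(1-\rho^2))^{-n/2}$. As written, your product equals $2^{-n/2}$, not the $(2s)^{-n/2}$ you state (which is the correct value).

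\textbf{Coefficients $c_l$.} The scalar factor of the $j$-th summand of \eqref{c1for} is
\[
4^{-(l-j)}\Big(\frac{2\sqrt{s}\,(1+\rho)}{\sqrt{n}\,|r_X+r_Y|}\Big)^{l-2j}\Big(\frac{s(1-\rho^2)}{2}\Big)^{j}=\frac{s^{l/2}}{2^ln^{l/2}}\Big(\frac{1+\rho}{|r_X+r_Y|}\Big)^{l}\Big(\frac{n}{2}\,\frac{1-\rho}{1+\rho}\,(r_X+r_Y)^2\Big)^{j}.
\]
So after pulling out $s^{l/2}2^{l}n^{-l/2}\big((1+\rho)/|r_X+r_Y|\big)^l$, the remainder is $4^{-l}(\cdots)^j$, not $2^{-l}(\cdots)^j$. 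Your description is therefore off by a factor $2^l$. The leftover $s^{l/2}$ is exactly what appears as $s^{l/2}/x^{l/2}$ in \eqref{expansion11}.

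With these corrections your argument is complete.
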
 

Setting $n=1$ in the formulas of Corollary \ref{corpn} yields asymptotic expansions for the PDF of the product $Z$. It turns out that in this case we can obtain further simplifications, which are given in the following corollary; the short proof is given in Section \ref{sec5}. The coefficients are expressed in terms of the ``physicist's  Hermite polynomials," which are given by
\begin{align*}
H_n(x)=(-1)^n\mathrm{e}^{x^2}\frac{\mathrm{d}^n}{\mathrm{d}x^n}\big(\mathrm{e}^{-x^2}\big)   
\end{align*}
(see \cite[Chapter 18]{olver}). In the case $r_X+r_Y\not=0$, the coefficients involve the ceiling function $\lceil x\rceil=\min\{k\in\mathbb{Z}\,:\,k\geq x\}$.

\begin{corollary}\label{corpn2} 
1. Suppose that $r_X+r_Y\not=0$. Then, as $x\rightarrow\infty$,
\begin{align}
f_{Z}(x)&\sim \frac{1}{2\sqrt{2\pi sx}}\exp\bigg(-\frac{1}{4(1+\rho)}(r_X+r_Y)^2\bigg)\exp\bigg(-\frac{1}{8}(r_X-r_Y)^2\bigg)\nonumber \\
&\quad\times \exp\bigg(\frac{|r_X+r_Y|}{1+\rho}\sqrt{\frac{x}{s}}-\frac{x}{s(1+\rho)}\bigg)\sum_{l=0}^\infty c_l(r_X,r_Y,\rho,1)\frac{s^{l/2}}{x^{l/2}},\label{33a}
\end{align}
where $c_0(r_X,r_Y,\rho,1)=1$ and, for $l\geq1$,
\begin{align}
c_l(r_X,r_Y,\rho,1)&= (-1)^l\bigg(\frac{1+\rho}{2|r_X+r_Y|}\bigg)^l\sum_{j=\lceil l/2\rceil}^{l}\frac{1}{(2j-l)!}\binom{l}{j} \nonumber \\
&\quad\times\bigg(\frac{1}{8}\bigg(\frac{1-\rho}{1+\rho}\bigg)(r_X+r_Y)^2\bigg)^j H_{2j}\!\bigg(\frac{\mathrm{i}}{\sqrt{8}}\sqrt{\frac{1+\rho}{1-\rho}}(r_X-r_Y)\bigg), \label{33b}
\end{align}
where $\mathrm{i}=\sqrt{-1}$.

\vspace{3mm}

\noindent 2. Suppose that $r_X+r_Y\not=0$. Then, as $x\rightarrow\infty$,
\begin{align*}
f_{Z}(x)\sim  \frac{\mathrm{e}^{-r_X^2/2}}{\sqrt{2\pi sx}}\exp\bigg\{-\frac{x}{s(1+\rho)}\bigg\}\sum_{k=0}^\infty d_k(r_X,\rho,1)\frac{s^k}{x^k},  
\end{align*}
where $d_0(r_X,\rho,1)=1$ and, for $k\geq1$,
\begin{align*}
d_k(r_X,\rho,1)=\binom{2k}{k}\bigg(\frac{1-\rho^2}{32}\bigg)^k H_{2k}\bigg(\frac{\mathrm{i}}{\sqrt{2}}\sqrt{\frac{1+\rho}{1-\rho}}\,r_X\bigg).
\end{align*}
\end{corollary}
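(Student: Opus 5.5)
We specialise Corollary \ref{corpn} to $n=1$ and rewrite its coefficients via Hermite polynomials. (Part 2 of Corollary \ref{corpn2} should carry the hypothesis $r_X+r_Y=0$, mirroring part 2 of Corollary \ref{corpn}; we read it that way.) The prefactors are routine. With $n=1$, the exponent in $x^{(n-3)/4}$ is $-1/2$, the factor $((1+\rho)/(|r_X+r_Y|\sqrt n))^{(n-1)/2}$ equals $1$, and $s^{-(n+1)/4}=s^{-1/2}$. Combining with $1/(2\sqrt{2\pi})$ gives the leading factor $1/(2\sqrt{2\pi s x})$ in (\ref{33a}). Likewise, in (\ref{expan11}) we get $(2s)^{-1/2}/\Gamma(1/2)=1/\sqrt{2\pi s}$. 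The exponentials carry over unchanged.

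\textbf{Step 1 ($d_k$).} With $n=1$ the Laguerre parameter is $-1/2$, and we use the standard identity $H_{2k}(y)=(-1)^k 2^{2k}k!\,L_k^{(-1/2)}(y^2)$ (see \cite[Chapter 18]{olver}). Taking $y=\frac{\mathrm{i}}{\sqrt2}\sqrt{\frac{1+\rho}{1-\rho}}\,r_X$ gives $y^2=-\frac12\frac{1+\rho}{1-\rho}r_X^2$, which is exactly the Laguerre argument. Hence $L_k^{(-1/2)}=(-1)^kH_{2k}(y)/(4^kk!)$. Next we use $(1/2)_k=(2k)!/(4^kk!)$, so that
\[(-1)^k(1/2)_k\Big(\tfrac{1-\rho^2}{2}\Big)^kL_k^{(-1/2)}=\binom{2k}{k}\Big(\tfrac{1-\rho^2}{32}\Big)^kH_{2k}(y).\]
The signs cancel, and $4^k\cdot4^k\cdot 2^k=32^k$.

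\textbf{Step 2 ($c_l$).} This is the main obstacle. For $n=1$ the summand contains $(1+j)_{l-j}(-j)_{l-j}$. The factor $(-j)_{l-j}$ vanishes unless $l-j\le j$, i.e.\ $j\ge\lceil l/2\rceil$; this produces the ceiling in (\ref{33b}). For $j\ge l-j$ we have $(-j)_{l-j}=(-1)^{l-j}j!/(2j-l)!$ and $(1+j)_{l-j}=l!/j!$. Dividing by $(l-j)!$, the summand coefficient becomes $(-1)^{l-j}\binom{l}{j}/(2j-l)!$.

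The Laguerre factor $L_j^{(-1/2)}\big(-\frac18\frac{1+\rho}{1-\rho}(r_X-r_Y)^2\big)$ converts by the same identity, with $y=\frac{\mathrm{i}}{\sqrt8}\sqrt{\frac{1+\rho}{1-\rho}}(r_X-r_Y)$, into $(-1)^jH_{2j}(y)/(4^jj!)$. This is where the bookkeeping needs care. Multiplying by $\big(\tfrac12\tfrac{1-\rho}{1+\rho}(r_X+r_Y)^2\big)^j$ and dividing by $4^j$ yields $\big(\tfrac18\tfrac{1-\rho}{1+\rho}(r_X+r_Y)^2\big)^j$. The signs combine as $(-1)^{l-j}(-1)^j=(-1)^l$. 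The prefactor $\frac{1}{2^l}\big(\frac{1+\rho}{|r_X+r_Y|}\big)^l$ becomes $\big(\frac{1+\rho}{2|r_X+r_Y|}\big)^l$.

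There is a leftover $1/j!$ from the Laguerre–Hermite conversion. I expect it to be absorbed by re-examining the Pochhammer ratio: $(1+j)_{l-j}=l!/j!$ against $(l-j)!$ gives $l!/(j!(l-j)!)$, and the extra $j!$ from $(-j)_{l-j}$ cancels the Hermite $j!$. This yields exactly $\binom{l}{j}/(2j-l)!$. I would verify this cancellation against $c_1$ from Remark \ref{remcd} as a sanity check.
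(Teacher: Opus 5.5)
Your proposal is correct and is essentially the paper's own proof: you specialise Corollary \ref{corpn} to $n=1$ and convert $L^{(-1/2)}_j$ to $H_{2j}$ via $L_j^{(-1/2)}(y^2)=(-1)^jH_{2j}(y)/(4^jj!)$. You then use $(1+j)_{l-j}=l!/j!$ and $(-j)_{l-j}=(-1)^{l-j}j!/(2j-l)!$ (zero for $l>2j$), and your reading of part 2 as $r_X+r_Y=0$ matches the paper's proof. One bookkeeping slip: the Pochhammer ratio by itself is $(-1)^{l-j}\binom{l}{j}\,j!/(2j-l)!$, not $(-1)^{l-j}\binom{l}{j}/(2j-l)!$ as you first wrote, and it is exactly this $j!$ that cancels the $1/j!$ from the Hermite conversion, as your final paragraph correctly concludes.
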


\begin{remark}
We elected to state the asymptotic expansions of Corollary \ref{corpn2} in terms of the physicist's Hermite polynomials because there is an in-built function in \emph{Mathematica} for the physicist's Hermite polynomials, but not for the probabilist's Hermite polynomials $He_n(x)=(-1)^n\mathrm{e}^{x^2/2}\frac{\mathrm{d}^n}{\mathrm{d}x^n}(\mathrm{e}^{-x^2/2})$. However, given the importance of the probabilist's Hermite polynomials in probability and statistics, and their natural connection to the normal distribution, we note that our formulas can equivalently be stated in terms of these polynomials via the relation $H_n(x)=2^{n/2}He_n(\sqrt{2}x)$
(see \cite[Eq.\ 18.7.12]{olver}).
\end{remark}

\section{Numerical results}\label{sec4}

\begin{table}[h]
	\centering
	\caption{\footnotesize{Relative error in approximating the PDF (\ref{diff}) of the difference $X_1-X_2$ by the asymptotic expansions (\ref{thm3.11}) (for $\lambda_1\not=0$) and (\ref{thm3.12}) (for $\lambda_1=0$) with the leading-order term, with first-order correction, and with second-order correction. Here we set $\beta_1=\beta_2=1$. 
	}}
	\label{table1}
	\footnotesize{
		\begin{tabular}{l*{6}{c}}
			\hline
			& \multicolumn{6}{c}{$x$} \\
			\cmidrule(lr){2-7}
			$(\lambda_1,\lambda_2,\alpha_1,\alpha_2)$ & 2.5 & 5 & 7.5 & 10 & 12.5 & 15 \\
			\hline
			(0,1,0.5,0.5) & 8.2E-02 & 4.5E-02 & 3.1E-02 & 2.4E-02 & 1.9E-02 & 1.6E-02 \\
			(0,1,0.5,0.5) & $-$2.6E-02 & $-$7.6E-03 & $-$3.6E-03 & $-$2.1E-03 & $-$1.4E-03 & $-$9.6E-04 \\
			(0,1,0.5,0.5) & 1.4E-02 & 2.2E-03 & 7.1E-04 & 3.1E-04 & 1.7E-04 & 9.8E-05 \\
			\hline
			(0,1,0.5,1) & 1.2E-01 & 6.7E-02 & 4.6E-02 & 3.5E-02 & 2.9E-02 & 2.4E-02 \\
			(0,1,0.5,1) & $-$4.5E-02 & $-$1.3E-02 & $-$6.1E-03 & $-$3.5E-03 & $-$2.3E-03 & $-$1.6E-03 \\
			(0,1,0.5,1) & 2.7E-02 & 4.1E-03 & 1.3E-03 & 5.8E-04 & 3.0E-04 & 1.8E-04 \\
            \hline   		
			(1,0,0.5,0.5) & $-$1.1E-01 & $-$8.8E-02 & $-$7.6E-02 & $-$6.8E-02 & $-$6.2E-02 & $-$5.7E-02 \\
			(1,0,0.5,0.5) & 3.0E-02 & 1.4E-02 & 8.4E-03 & 5.9E-03 & 4.5E-03 & 3.7E-03 \\
			(1,0,0.5,0.5) & 1.9E-02 & 8.0E-03 & 4.6E-03 & 3.0E-03 & 2.2E-03 & 1.7E-03 \\
			\hline
			(1,0,1,0.5) & $-$1.7E-01 & $-$1.3E-01 & $-$1.1E-01 & $-$9.4E-02 & $-$8.4E-02 & $-$7.8E-02 \\
			(1,0,1,0.5) & $-$9.5E-03 & $-$6.8E-03 & $-$5.2E-03 & $-$4.2E-03 & $-$3.5E-03 & $-$3.1E-03 \\
			(1,0,1,0.5) & 1.2E-02 & 4.5E-03 & 2.5E-03 & 1.6E-03 & 1.2E-03 & 8.9E-04 \\
			\hline
			(1,0,0.5,1) & $-$2.0E-01 & $-$1.7E-01 & $-$1.5E-01 & $-$1.3E-01 & $-$1.2E-01 & $-$1.1E-01 \\
			(1,0,0.5,1) & 4.7E-02 & 1.9E-02 & 1.1E-02 & 7.3E-03 & 5.3E-03 & 4.1E-03 \\
			(1,0,0.5,1) & 4.7E-02 & 1.9E-02 & 1.1E-02 & 7.3E-03 & 5.3E-03 & 4.1E-03 \\
			 \hline
			(1,1,0.5,0.5) & $-$2.1E-01 & $-$1.7E-01 & $-$1.5E-01 & $-$1.3E-01 & $-$1.2E-01 & $-$1.1E-01 \\
			(1,1,0.5,0.5) & 3.7E-02 & 1.3E-02 & 5.9E-03 & 3.2E-03 & 1.8E-03 & 1.1E-03 \\
			(1,1,0.5,0.5) & 5.7E-02 & 2.3E-02 & 1.3E-02 & 8.6E-03 & 6.2E-03 & 4.8E-03 \\
			\hline
			(1,1,1,0.5) & $-$2.9E-01 & $-$2.2E-01 & $-$1.9E-01 & $-$1.6E-01 & $-$1.5E-01 & $-$1.4E-01 \\
			(1,1,1,0.5) & $-$3.2E-02 & $-$2.4E-02 & $-$1.9E-02 & $-$1.5E-02 & $-$1.3E-02 & $-$1.1E-02 \\
			(1,1,1,0.5) & 3.6E-02 & 1.3E-02 & 7.0E-03 & 4.5E-03 & 3.2E-03 & 2.4E-03 \\
			\hline
			(1,1,0.5,1) & $-$2.9E-01 & $-$2.4E-01 & $-$2.1E-01 & $-$1.9E-01 & $-$1.8E-01 & $-$1.6E-01 \\
			(1,1,0.5,1) & 4.3E-02 & 1.1E-02 & 3.6E-03 & 7.1E-04 & $-$6.3E-04 & $-$1.3E-03 \\
			(1,1,0.5,1) & 8.7E-02 & 3.5E-02 & 2.0E-02 & 1.3E-02 & 9.7E-03 & 7.4E-03 \\
			\hline
	\end{tabular}}
\end{table}

\begin{table}[h]
	\centering
	\caption{\footnotesize{Relative error in approximating the PDF (\ref{diff}) of the difference $X_1-X_2$ by the asymptotic expansions (\ref{thm3.11}) (for $\lambda_1\not=0$) and (\ref{thm3.12}) (for $\lambda_1=0$) with $k$-th order correction, where $k=3,5,7,9$. Here we set $\beta_1=\beta_2=1$. 
	}}
	\label{table7}
	\footnotesize{
	\begin{tabular}{l*{6}{c}}
			\hline
			& \multicolumn{6}{c}{$x$} \\
			\cmidrule(lr){2-7}
			$(\lambda_1,\lambda_2,\alpha_1,\alpha_2;k)$ & 2.5 & 5 & 7.5 & 10 & 12.5 & 15 \\
			\hline
			(0,1,0.5,0.5;3) & $-$1.1E-02 & $-$9.0E-04 & $-$2.0E-04 & $-$6.6E-05 & $-$2.8E-05 & $-$1.4E-05 \\
			(0,1,0.5,0.5;5) & $-$1.4E-02 & $-$3.0E-04 & $-$3.0E-05 & $-$5.7E-06 & $-$1.6E-06 & $-$5.5E-07 \\
			(0,1,0.5,0.5;7) & $-$3.4E-02 & $-$1.9E-04 & $-$8.6E-06 & $-$9.5E-07 & $-$1.7E-07 & $-$4.1E-08 \\
			(0,1,0.5,0.5;9) & $-$1.4E-01 & $-$2.0E-04 & $-$4.1E-06 & $-$2.6E-07 & $-$2.9E-08 & $-$5.0E-09 \\
			\hline   		
			(1,0,0.5,0.5;3) & $-$3.8E-03 & $-$3.2E-04 & $-$8.7E-06 & 3.8E-05 & 4.2E-05 & 3.8E-05 \\
			(1,0,0.5,0.5;5) & $-$1.9E-03 & $-$2.9E-04 & $-$8.6E-05 & $-$3.8E-05 & $-$2.0E-05 & $-$1.2E-05 \\
			(1,0,0.5,0.5;7) & $-$2.8E-04 & $-$5.6E-07 & 1.0E-05 & 5.9E-06 & 3.1E-06 & 1.7E-06 \\
			(1,0,0.5,0.5;9) & $-$2.6E-03 & $-$1.6E-04 & $-$2.3E-05 & $-$4.9E-06 & $-$1.4E-06 & $-$4.7E-07 \\
			\hline
			(1,1,0.5,0.5;3) & 2.9E-03 & 2.8E-03 & 1.8E-03 & 1.2E-03 & 8.3E-04 & 6.2E-04 \\
			(1,1,0.5,0.5;5) & $-$1.1E-02 & $-$2.0E-03 & $-$7.1E-04 & $-$3.3E-04 & $-$1.8E-04 & $-$1.1E-04 \\
			(1,1,0.5,0.5;7) & 1.2E-02 & 1.0E-03 & 2.4E-04 & 8.4E-05 & 3.7E-05 & 1.8E-05 \\
			(1,1,0.5,0.5;9) & $-$1.7E-02 & $-$7.7E-04 & $-$1.1E-04 & $-$2.8E-05 & $-$9.2E-06 & $-$3.7E-06 \\
			\hline
			\end{tabular}}
\end{table}

\begin{table}[h]
	\centering
	\caption{\footnotesize{Relative error in approximating the probability $\mathbb{P}(X_1-X_2> x)$ by the asymptotic expansions (\ref{thm3.61}) (for $\lambda_1\not=0$) and (\ref{thm3.52}) (for $\lambda_1=0$) with second-order correction.
	}}
	\label{table2}
	\footnotesize{
		\begin{tabular}{l*{6}{c}}
			\hline
			& \multicolumn{6}{c}{$x$} \\
			\cmidrule(lr){2-7}
			$(\lambda_1,\lambda_2,\alpha_1,\alpha_2,\beta_1,\beta_2)$
			& $q_{0.95}$ & $q_{0.975}$ & $q_{0.99}$ & $q_{0.995}$ & $q_{0.999}$ & $q_{0.9999}$ \\
			\hline
			$(0,1,0.5,1,1)$ & 8.6E+00 & 3.2E+00 & 1.2E+00 & 6.9E-01 & 2.2E-01 & 4.4E-02 \\
			$(0,1,0.5,1,2)$ & 1.8E+01 & 5.4E+00 & 1.8E+00 & 9.6E-01 & 2.9E-01 & 6.3E-02 \\
			$(0,1,0.5,2,1)$ & 5.0E+00 & 2.1E+00 & 9.0E-01 & 5.2E-01 & 1.7E-01 & 3.4E-02 \\
			$(0,1,1,0.5,1,1)$ & 2.1E+01 & 5.9E+00 & 1.9E+00 & 1.0E+00 & 3.1E-01 & 6.3E-02 \\
			$(0,1,1,0.5,1,2)$ & 1.5E+02 & 1.8E+01 & 4.0E+00 & 1.8E+00 & 4.8E-01 & 1.1E-01 \\
			$(0,1,1,0.5,2,1)$ & 7.6E+00 & 2.9E+00 & 1.2E+00 & 6.5E-01 & 2.1E-01 & 3.9E-02 \\
			\hline
            $(1,0,0.5,0.5,1,1)$ & 8.0E-02 & 6.0E-02 & 4.3E-02 & 3.6E-02 & 2.7E-02 & 1.8E-02 \\
			$(1,0,0.5,0.5,1,2)$ & 9.2E-02 & 6.8E-02 & 5.0E-02 & 4.1E-02 & 2.6E-02 & 1.8E-02 \\
			$(1,0,0.5,0.5,2,1)$ & 7.0E-02 & 5.3E-02 & 3.9E-02 & 3.2E-02 & 2.3E-02 & 1.5E-02 \\
			$(1,0,1,0.5,1,1)$ & 2.8E-02 & 2.0E-02 & 1.3E-02 & 1.0E-02 & 5.7E-03 & 3.5E-03 \\
			$(1,0,1,0.5,1,2)$ & 3.2E-02 & 2.2E-02 & 1.5E-02 & 1.2E-02 & 8.2E-03 & 5.5E-03 \\
			$(1,0,1,0.5,2,1)$ & 2.5E-02 & 1.8E-02 & 1.3E-02 & 1.1E-02 & 9.3E-03 & 6.0E-03 \\
			$(1,0,0.5,1,1,1)$ & 1.1E-01 & 7.7E-02 & 5.6E-02 & 4.6E-02 & 3.1E-02 & 1.7E-02 \\
			$(1,0,0.5,1,1,2)$ & 1.4E-01 & 9.9E-02 & 6.9E-02 & 5.6E-02 & 3.8E-02 & 2.6E-02 \\
			$(1,0,0.5,1,2,1)$ & 8.5E-02 & 6.3E-02 & 4.6E-02 & 3.7E-02 & 2.5E-02 & 1.9E-02 \\
			\hline
			$(1,1,0.5,0.5,1,1)$ & 1.2E-01 & 8.4E-02 & 6.0E-02 & 4.8E-02 & 3.2E-02 & 2.8E-02 \\
			$(1,1,0.5,0.5,1,2)$ & 1.5E-01 & 1.0E-01 & 7.0E-02 & 5.6E-02 & 3.6E-02 & 2.2E-02 \\
			$(1,1,0.5,0.5,2,1)$ & 9.4E-02 & 6.9E-02 & 5.0E-02 & 4.1E-02 & 2.7E-02 & 1.2E-02 \\
			$(1,1,1,0.5,1,1)$ & 3.8E-02 & 2.6E-02 & 1.7E-02 & 1.4E-02 & 9.1E-03 & 5.2E-03 \\
			$(1,1,1,0.5,1,2)$ & 4.6E-02 & 3.0E-02 & 1.9E-02 & 1.5E-02 & 8.3E-03 & 3.5E-03 \\
			$(1,1,1,0.5,2,1)$ & 3.2E-02 & 2.3E-02 & 1.6E-02 & 1.3E-02 & 7.5E-03 & 2.9E-03 \\
			$(1,1,0.5,1,1,1)$ & 1.5E-01 & 1.1E-01 & 7.5E-02 & 6.0E-02 & 4.1E-02 & 2.7E-02 \\
			$(1,1,0.5,1,1,2)$ & 2.3E-01 & 1.5E-01 & 9.6E-02 & 7.4E-02 & 4.8E-02 & 2.7E-02 \\
			$(1,1,0.5,1,2,1)$ & 1.1E-01 & 8.1E-02 & 5.9E-02 & 4.9E-02 & 3.4E-02 & 2.4E-02 \\
			\hline
	\end{tabular}}
\end{table}

\begin{table}[h]
	\centering
	\caption{\footnotesize{Relative error in approximating $Q_{X_1-X_2}(p)$ by the asymptotic approximations (\ref{thm3.71}) (for $\lambda_1\not=0)$ and (\ref{thm3.72}) (for $\lambda_1=0)$. 
	}}
	\label{table3}
	\footnotesize{
		\begin{tabular}{l*{6}{c}}
			\hline
			& \multicolumn{6}{c}{$x$} \\
			\cmidrule(lr){2-7}
			$(\lambda_1,\lambda_2,\alpha_1,\alpha_2,\beta_1,\beta_2)$
			& $q_{0.95}$ & $q_{0.975}$ & $q_{0.99}$ & $q_{0.995}$ & $q_{0.999}$ & $q_{0.9999}$ \\
            \hline
			$(0,1,0.5,0.5,1,1)$ & $-$1.1E-01 & $-$6.3E-02 & $-$3.8E-02 & $-$2.8E-02 & $-$1.6E-02 & $-$9.4E-03 \\
			$(0,1,0.5,0.5,1,2)$ & $-$2.3E-01 & $-$1.2E-01 & $-$6.3E-02 & $-$4.4E-02 & $-$2.4E-02 & $-$1.3E-02 \\
			$(0,1,0.5,0.5,2,1)$ & $-$5.6E-02 & $-$3.7E-02 & $-$2.4E-02 & $-$1.9E-02 & $-$1.2E-02 & $-$6.8E-03 \\
			$(0,1,0.5,1,1,1)$ & $-$1.8E-01 & $-$9.1E-02 & $-$5.0E-02 & $-$3.5E-02 & $-$1.9E-02 & $-$1.1E-02 \\
			$(0,1,0.5,1,1,2)$ & $-$7.1E-01 & $-$2.3E-01 & $-$1.0E-01 & $-$6.7E-02 & $-$3.3E-02 & $-$1.6E-02 \\
			$(0,1,0.5,1,2,1)$ & $-$7.2E-02 & $-$4.5E-02 & $-$2.8E-02 & $-$2.1E-02 & $-$1.3E-02 & $-$8.0E-03 \\
			\hline
			$(1,0,0.5,0.5,1,1)$ & 3.2E-01 & 2.5E-01 & 1.9E-01 & 1.6E-01 & 1.2E-01 & 8.5E-02 \\
			$(1,0,0.5,0.5,1,2)$ & 2.8E-01 & 2.1E-01 & 1.6E-01 & 1.3E-01 & 9.3E-02 & 6.6E-02 \\
			$(1,0,0.5,0.5,2,1)$ & 2.4E-01 & 1.8E-01 & 1.4E-01 & 1.2E-01 & 8.4E-02 & 6.0E-02 \\
			$(1,0,1,0.5,1,1)$ & 2.1E-01 & 1.7E-01 & 1.4E-01 & 1.2E-01 & 8.9E-02 & 6.7E-02 \\
			$(1,0,1,0.5,1,2)$ & 1.8E-01 & 1.4E-01 & 1.1E-01 & 9.1E-02 & 6.7E-02 & 4.9E-02 \\
			$(1,0,1,0.5,2,1)$ & 1.5E-01 & 1.2E-01 & 9.1E-02 & 7.9E-02 & 6.0E-02 & 4.4E-02 \\
			$(1,0,0.5,1,1,1)$ & 5.4E-01 & 4.1E-01 & 3.2E-01 & 2.7E-01 & 2.0E-01 & 1.4E-01 \\
			$(1,0,0.5,1,1,2)$ & 4.8E-01 & 3.5E-01 & 2.6E-01 & 2.1E-01 & 1.5E-01 & 1.1E-01 \\
			$(1,0,0.5,1,2,1)$ & 3.5E-01 & 2.7E-01 & 2.1E-01 & 1.7E-01 & 1.3E-01 & 9.4E-02 \\
			\hline
			$(1,1,0.5,0.5,1,1)$ & 4.0E-01 & 3.0E-01 & 2.2E-01 & 1.8E-01 & 1.3E-01 & 9.5E-02 \\
			$(1,1,0.5,0.5,1,2)$ & 4.2E-01 & 3.0E-01 & 2.1E-01 & 1.7E-01 & 1.2E-01 & 8.1E-02 \\
			$(1,1,0.5,0.5,2,1)$ & 2.7E-01 & 2.0E-01 & 1.5E-01 & 1.3E-01 & 9.0E-02 & 6.3E-02 \\
			$(1,1,1,0.5,1,1)$ & 2.7E-01 & 2.1E-01 & 1.6E-01 & 1.4E-01 & 1.0E-01 & 7.5E-02 \\
			$(1,1,1,0.5,1,2)$ & 2.7E-01 & 2.0E-01 & 1.5E-01 & 1.2E-01 & 8.9E-02 & 6.3E-02 \\
			$(1,1,1,0.5,2,1)$ & 1.7E-01 & 1.3E-01 & 1.0E-01 & 8.7E-02 & 6.5E-02 & 4.7E-02 \\
			$(1,1,0.5,1,1,1)$ & 6.7E-01 & 4.9E-01 & 3.6E-01 & 3.0E-01 & 2.2E-01 & 1.6E-01 \\
			$(1,1,0.5,1,1,2)$ & 7.3E-01 & 4.9E-01 & 3.4E-01 & 2.7E-01 & 1.9E-01 & 1.3E-01 \\
			$(1,1,0.5,1,2,1)$ & 4.0E-01 & 3.0E-01 & 2.2E-01 & 1.9E-01 & 1.4E-01 & 9.9E-02 \\
			\hline
	\end{tabular}}
\end{table}

\begin{table}[h]
	\centering
	\caption{\footnotesize{Relative error in approximating the PDF \eqref{sum} of the sum $X_1+X_2$ by the asymptotic expansions (\ref{thm3.11}) (for $\lambda_1\not=0$) and (\ref{thm3.12}) (for $\lambda_1=0$) with the leading-order term, with first-order correction, and with second-order correction. Here we set $\beta_1=2$ and $\beta_2=1$. 
	}}
	\label{table4}
	\footnotesize{
	\begin{tabular}{l*{6}{c}}
		\hline
		& \multicolumn{6}{c}{$x$} \\
		\cmidrule(lr){2-7}
		$(\lambda_1,\lambda_2,\alpha_1,\alpha_2)$ & 10 & 20 & 30 & 40 & 50 & 60 \\
		\hline
		(0,1,0.5,0.5) & $-$2.7E-01 & $-$1.7E-01 & $-$1.0E-01 & $-$7.1E-02 & $-$5.5E-02 & $-$4.5E-02 \\
		(0,1,0.5,0.5) & $-$8.4E-02 & $-$7.0E-02 & $-$2.7E-02 & $-$1.3E-02 & $-$7.7E-03 & $-$5.2E-03 \\
		(0,1,0.5,0.5) & 3.4E-02 & $-$3.7E-02 & $-$1.0E-02 & $-$3.6E-03 & $-$1.6E-03 & $-$8.9E-04 \\ \hline
		
		(0,1,1,0.5) & 1.4E-01 & 6.8E-03 & 2.5E-04 & 7.6E-06 & 2.0E-07 & 4.6E-09 \\
		(0,1,1,0.5) & 1.4E-01 & 6.8E-03 & 2.5E-04 & 7.6E-06 & 2.0E-07 & 4.6E-09 \\
		(0,1,1,0.5) & 1.4E-01 & 6.8E-03 & 2.5E-04 & 7.6E-06 & 2.0E-07 & 4.6E-09 \\ \hline
		
		(0,1,0.5,1) & $-$2.9E-01 & $-$2.1E-01 & $-$1.2E-01 & $-$8.6E-02 & $-$6.6E-02 & $-$5.4E-02 \\
		(0,1,0.5,1) & $-$7.7E-02 & $-$8.9E-02 & $-$3.5E-02 & $-$1.7E-02 & $-$1.0E-02 & $-$6.7E-03 \\
		(0,1,0.5,1) & 7.3E-02 & $-$4.8E-02 & $-$1.5E-02 & $-$5.0E-03 & $-$2.3E-03 & $-$1.2E-03 \\ \hline
		
		(1,0,0.5,0.5) & 1.7E-01 & 1.3E-01 & 1.1E-01 & 9.6E-02 & 8.7E-02 & 8.1E-02 \\
		(1,0,0.5,0.5) & $-$9.4E-02 & $-$5.0E-02 & $-$3.5E-02 & $-$2.6E-02 & $-$2.1E-02 & $-$1.8E-02 \\
		(1,0,0.5,0.5) & 5.2E-02 & 2.0E-02 & 1.2E-02 & 7.8E-03 & 5.7E-03 & 4.4E-03 \\ \hline
		
		(1,0,1,0.5) & 1.5E-01 & 1.1E-01 & 9.7E-02 & 8.6E-02 & 7.7E-02 & 7.1E-02 \\
		(1,0,1,0.5) & $-$7.4E-02 & $-$3.9E-02 & $-$2.7E-02 & $-$2.1E-02 & $-$1.7E-02 & $-$1.4E-02 \\
		(1,0,1,0.5) & 3.8E-02 & 1.5E-02 & 8.5E-03 & 5.7E-03 & 4.2E-03 & 3.2E-03 \\ \hline
		
		(1,0,0.5,1) & 3.7E-01 & 2.8E-01 & 2.3E-01 & 2.0E-01 & 1.8E-01 & 1.7E-01 \\
		(1,0,0.5,1) & $-$2.4E-01 & $-$1.3E-01 & $-$8.7E-02 & $-$6.6E-02 & $-$5.4E-02 & $-$4.5E-02 \\
		(1,0,0.5,1) & 1.7E-01 & 6.4E-02 & 3.6E-02 & 2.4E-02 & 1.7E-02 & 1.3E-02 \\ \hline
		
		(1,1,0.5,0.5) & 1.1E+00 & 7.7E-01 & 6.3E-01 & 5.5E-01 & 4.9E-01 & 4.5E-01 \\
		(1,1,0.5,0.5) & $-$1.2E+00 & $-$6.3E-01 & $-$4.2E-01 & $-$3.2E-01 & $-$2.6E-01 & $-$2.1E-01 \\
		(1,1,0.5,0.5) & 1.5E+00 & 5.4E-01 & 3.0E-01 & 1.9E-01 & 1.4E-01 & 1.1E-01 \\ \hline
		
		(1,1,1,0.5) & 1.2E+00 & 8.1E-01 & 6.5E-01 & 5.6E-01 & 5.0E-01 & 4.5E-01 \\
		(1,1,1,0.5) & $-$1.2E+00 & $-$5.9E-01 & $-$3.9E-01 & $-$2.9E-01 & $-$2.3E-01 & $-$1.9E-01 \\
		(1,1,1,0.5) & 1.4E+00 & 4.7E-01 & 2.6E-01 & 1.7E-01 & 1.2E-01 & 9.1E-02 \\ \hline
		
		(1,1,0.5,1) & 1.5E+00 & 1.0E+00 & 8.2E-01 & 7.0E-01 & 6.2E-01 & 5.7E-01 \\
		(1,1,0.5,1) & $-$1.8E+00 & $-$9.0E-01 & $-$5.9E-01 & $-$4.4E-01 & $-$3.5E-01 & $-$2.9E-01 \\
		(1,1,0.5,1) & 2.4E+00 & 8.1E-01 & 4.4E-01 & 2.8E-01 & 2.0E-01 & 1.5E-01 \\
		\hline
		\end{tabular}}
\end{table}

\begin{table}[h]
	\centering
	\caption{\footnotesize{Relative error in approximating the PDF \eqref{sum} of the sum $X_1+X_2$ by the asymptotic expansions (\ref{thm3.11}) (for $\lambda_1\not=0$) and (\ref{thm3.12}) (for $\lambda_1=0$) with $k$-th order correction, where $k=3,5,7,9$. Here we set $\beta_1=2$ and $\beta_2=1$.
	}}
	\label{table8}
	\footnotesize{
		\begin{tabular}{l*{6}{c}}
			\hline
			& \multicolumn{6}{c}{$x$} \\
			\cmidrule(lr){2-7}
			$(\lambda_1,\lambda_2,\alpha_1,\alpha_2;k)$ & 10 & 20 & 30 & 40 & 50 & 60 \\
			\hline
			(0,1,0.5,0.5;3) & 1.5E-01 & $-$2.0E-02 & $-$5.2E-03 & $-$1.3E-03 & $-$4.5E-04 & $-$2.0E-04 \\
			(0,1,0.5,0.5;5) & 5.1E-01 & $-$2.4E-03 & $-$1.9E-03 & $-$3.1E-04 & $-$6.2E-05 & $-$1.8E-05 \\
			(0,1,0.5,0.5;7) & 1.7E+00 & 1.2E-02 & $-$7.7E-04 & $-$1.2E-04 & $-$1.6E-05 & $-$2.9E-06 \\
			(0,1,0.5,0.5;9) & 8.7E+00 & 3.2E-02 & $-$9.1E-05 & $-$5.5E-05 & $-$5.8E-06 & $-$7.2E-07 \\
			\hline   		
			(1,0,0.5,0.5;3) & $-$3.9E-02 & $-$1.1E-02 & $-$5.0E-03 & $-$2.9E-03 & $-$1.9E-03 & $-$1.3E-03 \\
			(1,0,0.5,0.5;5) & $-$2.8E-02 & $-$3.9E-03 & $-$1.2E-03 & $-$5.3E-04 & $-$2.8E-04 & $-$1.6E-04 \\
			(1,0,0.5,0.5;7) & $-$2.7E-02 & $-$1.9E-03 & $-$4.0E-04 & $-$1.3E-04 & $-$5.6E-05 & $-$2.7E-05 \\
			(1,0,0.5,0.5;9) & $-$3.3E-02 & $-$1.2E-03 & $-$1.7E-04 & $-$4.1E-05 & $-$1.4E-05 & $-$5.7E-06 \\
			\hline
			(1,1,0.5,0.5;3) & $-$1.9E+00 & $-$4.9E-01 & $-$2.2E-01 & $-$1.2E-01 & $-$8.0E-02 & $-$5.6E-02 \\
			(1,1,0.5,0.5;5) & $-$3.6E+00 & $-$4.6E-01 & $-$1.4E-01 & $-$5.9E-02 & $-$3.0E-02 & $-$1.8E-02 \\
			(1,1,0.5,0.5;7) & $-$7.7E+00 & $-$5.0E-01 & $-$1.0E-01 & $-$3.2E-02 & $-$1.3E-02 & $-$6.4E-03 \\
			(1,1,0.5,0.5;9) & $-$1.9E+01 & $-$6.1E-01 & $-$8.3E-02 & $-$2.0E-02 & $-$6.6E-03 & $-$2.7E-03 \\
			\hline
			\end{tabular}}
\end{table}

\begin{table}[h]
	\centering
	\caption{\footnotesize{Relative error in approximating the probability $\mathbb{P}(X_1+X_2> x)$ by the asymptotic expansions (\ref{thm3.61}) (for $\lambda_1\not=0$) and (\ref{thm3.52}) (for $\lambda_1=0$) with second-order correction.
	}}
	\label{table5}
	\footnotesize{
		\begin{tabular}{l*{6}{c}}
			\hline
			& \multicolumn{6}{c}{$x$} \\
			\cmidrule(lr){2-7}
			$(\lambda_1,\lambda_2,\alpha_1,\alpha_2,\beta_1,\beta_2)$ 
			& $q_{0.95}$ & $q_{0.975}$ & $q_{0.99}$ & $q_{0.995}$ & $q_{0.999}$ & $q_{0.9999}$ \\
			\hline
			(0,1,0.5,0.5,2,1) & 5.4E-01 & 2.7E-01 & 1.0E-01 & 3.2E-02 & $-$3.7E-02 & $-$5.3E-02 \\
			(0,1,0.5,0.5,4,1) & 9.0E-01 & 5.3E-01 & 2.8E-01 & 1.7E-01 & 5.6E-02 & 1.7E-04 \\
			(0,1,1,0.5,2,1) & 2.5E-01 & 1.4E-01 & 5.4E-02 & 1.9E-02 & $-$2.1E-02 & $-$3.2E-02 \\
			(0,1,1,0.5,4,1) & 2.1E-01 & 1.1E-01 & 3.6E-02 & 6.8E-03 & $-$2.6E-02 &$-$3.7E-02 \\
			(0,1,0.5,1,2,1) & 4.5E-01 & 2.2E-01 & 6.7E-02 & 6.2E-03 & $-$5.3E-02 & $-$6.4E-02 \\
			(0,1,0.5,1,4,1) & 7.2E-01 & 4.3E-01 & 2.3E-01 & 1.5E-01 & 4.4E-02 & $-$7.6E-03 \\ \hline
			
			(1,0,0.5,0.5,2,1) & 5.9E-02 & 4.6E-02 & 3.5E-02 & 3.0E-02 & 2.3E-02 & 1.7E-02 \\
			(1,0,0.5,0.5,4,1) & 5.1E-02 & 3.9E-02 & 2.9E-02 & 2.4E-02 & 1.8E-02 & 1.3E-02 \\
			(1,0,1,0.5,2,1) & 2.5E-02 & 1.9E-02 & 1.5E-02 & 1.2E-02 & 7.3E-03 & 3.9E-03 \\
			(1,0,1,0.5,4,1) & 1.9E-02 & 1.4E-02 & 9.7E-03 & 8.0E-03 & 5.8E-03 & 4.6E-03 \\
			(1,0,0.5,1,2,1) & 8.4E-02 & 6.7E-02 & 5.2E-02 & 4.5E-02 & 3.5E-02 & 2.2E-02 \\
			(1,0,0.5,1,4,1) & 4.7E-02 & 3.6E-02 & 2.7E-02 & 2.2E-02 & 1.6E-02 & 6.9E-03 \\ \hline
			
			(1,1,0.5,0.5,2,1) & 5.7E-01 & 4.6E-01 & 3.6E-01 & 3.1E-01 & 2.3E-01 & 1.6E-01 \\
			(1,1,0.5,0.5,4,1) & 5.0E-02 & 3.9E-02 & 3.0E-02 & 2.5E-02 & 1.9E-02 & 9.2E-03 \\
			(1,1,1,0.5,2,1) & 4.0E-01 & 3.3E-01 & 2.6E-01 & 2.3E-01 & 1.8E-01 & 1.3E-01 \\
			(1,1,1,0.5,4,1) & 2.0E-02 & 1.6E-02 & 1.1E-02 & 9.4E-03 & 6.6E-03 & 6.5E-03 \\
			(1,1,0.5,1,2,1) & 8.2E-01 & 6.6E-01 & 5.2E-01 & 4.5E-01 & 3.3E-01 & 2.4E-01 \\
			(1,1,0.5,1,4,1) & 5.1E-02 & 4.0E-02 & 3.1E-02 & 2.6E-02 & 1.8E-02 & 1.2E-02 \\ \hline
	\end{tabular}}
\end{table}

\begin{table}[h]
	\centering
	\caption{\footnotesize{Relative error in approximating $Q_{X_1+X_2}(p)$ by the asymptotic approximations (\ref{thm3.71}) (for $\lambda_1\not=0)$ and (\ref{thm3.72}) (for $\lambda_1=0)$. 
	}}
	\label{table6}
	\footnotesize{
		\begin{tabular}{l*{6}{c}}
			\hline
			& \multicolumn{6}{c}{$x$} \\
			\cmidrule(lr){2-7}
			$(\lambda_1,\lambda_2,\alpha_1,\alpha_2,\beta_1,\beta_2)$ 
			& $q_{0.95}$ & $q_{0.975}$ & $q_{0.99}$ & $q_{0.995}$ & $q_{0.999}$ & $q_{0.9999}$ \\
			\hline
			
			(0,1,0.5,0.5,2,1) & $-$3.1E-02 & $-$4.0E-02 & $-$4.0E-02 & $-$3.7E-02 & $-$2.8E-02 & $-$1.7E-02 \\
			(0,1,0.5,0.5,4,1) & $-$3.9E-02 & $-$2.8E-02 & $-$1.9E-02 & $-$1.5E-02 & $-$9.8E-03 & $-$5.7E-03 \\
			(0,1,1,0.5,2,1) & 3.1E-02 & 1.7E-02 & 8.4E-03 & 5.0E-03 & 1.6E-03 & 8.3E-04 \\
			(0,1,1,0.5,4,1) & 2.1E-04 & 5.4E-05 & 8.9E-05 & 1.0E-05 & $-$7.2E-05 & 2.9E-04 \\
			(0,1,0.5,1,2,1) & $-$2.7E-02 & $-$3.7E-02 & $-$3.9E-02 & $-$3.7E-02 & $-$2.8E-02 & $-$1.8E-02 \\
			(0,1,0.5,1,4,1) & $-$4.0E-02 & $-$2.9E-02 & $-$2.0E-02 & $-$1.6E-02 & $-$1.0E-02 & $-$6.3E-03 \\ \hline
			
			(1,0,0.5,0.5,2,1) & 1.4E-01 & 1.0E-01 & 7.6E-02 & 6.3E-02 & 4.4E-02 & 3.0E-02 \\
			(1,0,0.5,0.5,4,1) & 1.4E-01 & 1.0E-01 & 7.3E-02 & 6.0E-02 & 4.2E-02 & 2.8E-02 \\
			(1,0,1,0.5,2,1) & 6.8E-02 & 5.2E-02 & 3.9E-02 & 3.3E-02 & 2.3E-02 & 1.6E-02 \\
			(1,0,1,0.5,4,1) & 6.3E-02 & 4.8E-02 & 3.6E-02 & 3.0E-02 & 2.1E-02 & 1.4E-02 \\
			(1,0,0.5,1,2,1) & 1.4E-01 & 1.1E-01 & 7.9E-02 & 6.5E-02 & 4.6E-02 & 3.1E-02 \\
			(1,0,0.5,1,4,1) & 1.3E-01 & 1.0E-01 & 7.3E-02 & 6.0E-02 & 4.1E-02 & 2.7E-02 \\ \hline
			
			(1,1,0.5,0.5,2,1) & 1.7E-01 & 1.3E-01 & 1.0E-01 & 8.4E-02 & 6.0E-02 & 4.1E-02 \\
			(1,1,0.5,0.5,4,1) & 1.4E-01 & 1.0E-01 & 7.5E-02 & 6.2E-02 & 4.3E-02 & 2.9E-02 \\
			(1,1,1,0.5,2,1) & 1.1E-01 & 8.4E-02 & 6.5E-02 & 5.5E-02 & 4.1E-02 & 2.9E-02 \\
			(1,1,1,0.5,4,1) & 6.6E-02 & 5.1E-02 & 3.8E-02 & 3.2E-02 & 2.3E-02 & 1.6E-02 \\
			(1,1,0.5,1,2,1) & 1.7E-01 & 1.3E-01 & 1.0E-01 & 8.5E-02 & 6.1E-02 & 4.3E-02 \\
			(1,1,0.5,1,4,1) & 1.3E-01 & 1.0E-01 & 7.4E-02 & 6.1E-02 & 4.3E-02 & 2.9E-02 \\
			\hline
	\end{tabular}}
\end{table}

In this section, we provide numerical results to assess the quality of the asymptotic approximations given in Section \ref{sec3} for the PDF, tail probabilities and quantile function for the difference $X_1-X_2$ (Tables \ref{table1}--\ref{table3}) and the sum $X_1+X_2$ (Tables \ref{table4}--\ref{table6}) across a range of parameter constellations.

In Tables \ref{table1} and \ref{table7}, we give the relative error in approximating the PDF of the non-central gamma difference distribution by the asymptotic expansions \eqref{thm3.11} (when $\lambda_1\neq0$) and \eqref{thm3.12} (when $\lambda_1=0$). In compiling the figures for this table, we used \emph{Mathematica} to evaluate the PDF of $X_1-X_2$ by truncation of the infinite series in formulas (\ref{diff}) (when $\lambda_1\neq0$) and (\ref{diff0}) (when $\lambda_1=0$) at $k=50$, which allowed the PDF to be computed to a high level of accuracy. In Table \ref{table1} and all other tables, a negative figure means that the approximation gives a smaller value than the true value. 

 In Table \ref{table2}, we report the relative error in approximating the tail probability $\mathbb{P}(X_1-X_2>x)$ by the asymptotic expansions \eqref{thm3.61} (when $\lambda_1\neq0$) and \eqref{thm3.52} (when $\lambda_1=0$) with second-order correction. Here we took $x$ to be a quantile $q_p$ for $p=0.95,\ldots,0.9999$; we do not report results for the same values of $x$ as in Tables \ref{table1} and \ref{table7} because we obtained similar results (although in some instances the relative errors are not accurate to two significant figures because the error from the asymptotic approximation is non-negligible compared to the simulation error), as would be expected since the asymptotic expansions for the tail probabilities are derived from those for the PDF. We obtained the quantile values (which were also used in Table \ref{table3}) by running Monte Carlo simulations with $10^9$ realisations using \emph{Mathematica}. For the parameter constellations considered, the random variables $X_1$ and $X_2$ both follow scaled non-central chi-square distributions or scaled chi-square distributions, and such distributions can be simulated in \emph{Mathematica} by using the built-in \emph{RandomVariate} function combined with the \emph{NoncentralChiSquareDistribution} function. In Table \ref{table3}, we provide the relative error in approximating the quantile function $Q_{X_1-X_2}(p)$ by the asymptotic approximations \eqref{thm3.71} (when $\lambda_1\neq0$) and \eqref{thm3.72} (when $\lambda_1=0$). Analogues of Tables \ref{table1}--\ref{table3} are given for the sum $X_1+X_2$ in Tables \ref{table4}--\ref{table6}, and the results were obtained in an exactly analogous manner.

Results are reported to two significant figures. Most figures in the tables are accurate to two significant figures; however, this is not the case for some of the entries in Tables \ref{table2}, \ref{table3}, \ref{table5} and \ref{table6} when $p=0.999$ and $p=0.9999$ with more variation for $p=0.9999$ (which we observed by repeating the simulations several times). This is because in these cases the simulation error is non-negligible relative to the error arising from application of the asymptotic approximations.

From Table \ref{table1}, we observe that the relative errors resulting from applications of the asymptotic approximation (\ref{thm3.12}) (that is when $\lambda_1=0$) typically decay at a faster rate than the relative errors from applications of the asymptotic approximation (\ref{thm3.11}) (when $\lambda_1\not=0$). This is to be expected since, for example, the relative error from applying the leading-order term from the asymptotic expansion (\ref{thm3.12}) is of order $x^{-1}$, whilst the relative error for the leading-order term from the asymptotic expansion (\ref{thm3.11}) is of order $x^{-1/2}$. Also, as expected, we see that the relative errors tend to decrease for a given value of $x$ and a given parameter constellation as one includes further correction terms in the asymptotic expansions. The asymptotic approximations are rather accurate even for small values of $x$; indeed, when applying the asymptotic expansions with second-order correction the largest relative error amongst all parameter constellations we considered is just 8.7\% when $x=2.5$ and only 0.74\% when $x=15$. Similar conclusions can be drawn from Table \ref{table4}, although the relative errors reported tend to be higher than those given in Table \ref{table1}.
From Table \ref{table4}, we observe that for the parameter values $(\lambda_1,\lambda_2,\alpha_1,\alpha_2,\beta_1,\beta_2)=(0,1,0.5,0.5,2,1)$ the relative errors in the asymptotic approximation using the leading-order term, first-order correction and second-order correction are the same; this is because for these parameter values we have $c_l=0$ for $l\geq1$. We also observe that for this parameter constellation the relative error decreases at a rate that is consistent with exponential decay in $x$, which we claimed would be expected in Remark \ref{remexp}. In Tables \ref{table7} and \ref{table8}, we tested the effect of including further terms in the asymptotic expansions for the PDF of the non-central gamma difference and sum distributions. We find that for larger values of $x$ retaining further terms results in exceptionally accurate asymptotic approximations, whilst for smaller values of $x$ retaining further terms can make the approximations less accurate.

In Tables \ref{table2} and \ref{table5}, we only report results for the relative error in the asymptotic approximations of the tail probabilities using a second-order correction; our numerical experiments (that we do not report) revealed that, as was the case in Tables \ref{table1} and \ref{table4}, using a second-order correction typically yields more accurate results than using just the leading-order term or first-order correction. In Table \ref{table2}, the asymptotic approximation performs poorly when $\lambda_1=0$ particularly for the quantiles $q_{0.95}$, $q_{0.975}$, $q_{0.99}$ and $q_{0.995}$. This can be explained from the fact that for the parameter constellations with $\lambda_1=0$ the quantiles were positive (and so the asymptotic approximation is valid) but quite small, meaning that one would not expect reasonable accuracy from the asymptotic approximation. In contrast, from Tables \ref{table3} and \ref{table6} we see that the asymptotic approximations for the quantile function deliver reasonable accuracy across the range of parameter values, particularly when $\lambda_1=0$, which is to be expected given that the error in the asymptotic approximation (\ref{thm3.72}) is of the faster order $O(1/\ln(1/(1-p)))$ than the $O(1/\sqrt{\ln(1/(1-p))})$ rate of the asymptotic approximation (\ref{thm3.71}). 

\section{Proofs}\label{sec5}

In proving Theorems \ref{thm1.1} and \ref{thm2.3} we will require several definite integral formulas which we now state. In deriving the series representation (\ref{diff}) we will use the integral formula
\begin{equation}\label{int1}
\int_{-\infty}^\infty(z+\mathrm{i}t)^{-\rho}(y-\mathrm{i}t)^{-\sigma}\mathrm{e}^{\mathrm{i}\lambda t}\,\mathrm{d}t=\frac{2\pi}{\Gamma(\delta)}(y+z)^{1-\rho-\sigma}\mathrm{e}^{-|\lambda|\theta}U(1-\delta,2-\rho-\sigma,|\lambda|(y+z)),
\end{equation}
where $y,z,\rho,\sigma>0$, $\delta=\sigma$, $\theta=y$ for $\lambda<0$, and $\delta=\rho$, $\theta=z$ for $\lambda\geq0$. Integral formula (\ref{int1}) is a correction of the integral formula of \cite[p.\ 325, Eq.\ 19]{integralbook}, with the correction given by \cite[Eq.\ (A.35)]{gnp24}. In \cite{integralbook}, the integral formula was stated under the stronger condition that $\rho+\sigma>1$ (which ensures that the integrand is Lebesgue integrable over $\mathbb{R}$). However, if the integral in equation (\ref{int1}) is understood to exist in the improper Riemann sense (which is sufficient for our purposes), then the integral exists under the weaker condition that $\rho,\sigma>0$.

To obtain the series representation (\ref{sum}) we will make use of the following definite integral formula (see \cite[p.\ 325, Eq.\ 18]{integralbook}):
\begin{equation}\label{int2}
	\int_{-\infty}^{\infty}(z-\mathrm{i}t)^{-\rho}(y-\mathrm{i}t)^{-\sigma}\mathrm{e}^{\mathrm{i}\lambda t}\,\mathrm{d}t=\frac{2\pi(-\lambda)^{\rho+\sigma-1}}{\Gamma(\rho+\sigma)}\mathrm{e}^{\lambda z}M(\sigma;\rho+\sigma,-\lambda(z-y)), 
\end{equation}
where $\lambda<0$ and $y,z,\rho,\sigma>0$. The integral formula (\ref{int2}) is again stated in \cite{integralbook} under the assumption that $\rho+\sigma>1$; however, if the integral is understood to exist in the improper Riemann sense, then it exists under the weaker condition $\rho,\sigma>0$.

Finally, we will make use of the following integral, which is a special case of \cite[Eq.\ 3.384(9)]{g07}:
\begin{align}\label{int3}
	\int_{-\infty}^{\infty}(z+\mathrm{i}t)&^{-2\nu}(y-\mathrm{i}t)^{-2\nu}\mathrm{e}^{\mathrm{i}\lambda t}\,\mathrm{d}t\nonumber\\
    &=2\pi(y+z)^{-2\nu}\frac{|\lambda|^{2\nu-1}}{\Gamma(2\nu)}\exp\bigg(-\frac{z-y}{2}\lambda\bigg)W_{0,\frac{1}{2}-2\nu}((y+z) |\lambda|),
\end{align}
where $y,z,\nu>0$, $\lambda\in\mathbb{R}$, and $W_{\kappa,\mu}(x)$ is the Whittaker function (see \cite[Chapter 13]{olver}). Again, the integral formula is given in \cite{g07} under the assumption that $\nu>-1/4$, but this assumption can be weakened to $\nu>0$ if the integral is understood to exist in the improper Riemann sense.

\begin{proof}[Proof of Theorem \ref{thm1.1}] Since $X_1$ and $X_2$ are independent, the characteristic function of the difference $X_1-X_2$ is given by
	\begin{align}
		\phi_{X_1-X_2}(t)&=\phi_{X_1}(t)\phi_{X_2}(t)\nonumber \\
        &=(1-\mathrm{i}\beta_1t)^{-\alpha_1}(1+\mathrm{i}\beta_2t)^{-\alpha_2}\exp\bigg(-\lambda_1-\lambda_2+\frac{\lambda_1}{1-\mathrm{i}\beta_1t}+\frac{\lambda_2}{1+\mathrm{i}\beta_2t}\bigg),\label{cfdiff}
	\end{align}
for $t\in\mathbb{R}$. Upon applying the Taylor series expansion for the exponential function and the binomial theorem we obtain that 
\begin{align}
	\phi_{X_1-X_2}(t)
	&=\frac{\mathrm{e}^{-\lambda_1-\lambda_2}}{(1-\mathrm{i}\beta_1t)^{\alpha_1}(1+\mathrm{i}\beta_2t)^{\alpha_2}}\sum_{k=0}^{\infty}\frac{1}{k!}\bigg(\frac{\lambda_1}{1-\mathrm{i}\beta_1t}+\frac{\lambda_2}{1+\mathrm{i}\beta_2t}\bigg)^k\nonumber\\
	&=\mathrm{e}^{-\lambda_1-\lambda_2}\sum_{k=0}^{\infty}\sum_{j=0}^{k}\frac{1}{k!}\binom{k}{j}\lambda_1^{k-j}\lambda_2^j(1-\mathrm{i}\beta_1t)^{j-k-\alpha_1}(1+\mathrm{i}\beta_2t)^{-j-\alpha_2}.\nonumber
\end{align}
Applying the inversion theorem now yields the following expression for the PDF of $X_1-X_2$:
\begin{align}
	f_{X_1-X_2}(x)	&=\frac{1}{2\pi}\mathrm{e}^{-\lambda_1-\lambda_2}\sum_{k=0}^{\infty}\sum_{j=0}^{k}\frac{1}{k!}\binom{k}{j}\lambda_1^{k-j}\lambda_2^j\beta_1^{j-k-\alpha_1}\beta_2^{-j-\alpha_2}\nonumber\\
	&\quad\times\int_{-\infty}^{\infty}\mathrm{e}^{-\mathrm{i}tx}\bigg(\frac{1}{\beta_1}-\mathrm{i}t\bigg)^{j-k-\alpha_1}\bigg(\frac{1}{\beta_2}+\mathrm{i}t\bigg)^{-j-\alpha_2}\,\mathrm{d}t.\nonumber
\end{align}
Upon evaluating the integral using the integral formula (\ref{int1}), we have, for $x\geq0$,
\begin{align}
	f_{X_1-X_2}(x)&=\mathrm{e}^{-\lambda_1-\lambda_2-x/\beta_1}\sum_{k=0}^{\infty}\sum_{j=0}^{k}\frac{1}{k!\Gamma(\alpha_1+k-j)}\binom{k}{j}\lambda_1^{k-j}\lambda_2^j\beta_1^{j-k-\alpha_1}\beta_2^{-j-\alpha_2}\nonumber\\
	&\quad\times\bigg(\frac{1}{\beta_1}+\frac{1}{\beta_2}\bigg)^{1-\alpha_1-\alpha_2-k}U\bigg(1-\alpha_1-k+j,2-\alpha_1-\alpha_2-k,\bigg(\frac{1}{\beta_1}+\frac{1}{\beta_2}\bigg)x\bigg),\nonumber
\end{align}
and, for $x<0$,
\begin{align}
	f_{X_1-X_2}(x)&=\mathrm{e}^{-\lambda_1-\lambda_2-x/\beta_2}\sum_{k=0}^{\infty}\sum_{j=0}^{k}\frac{1}{k!\Gamma(j+\alpha_2)}\binom{k}{j}\lambda_1^{k-j}\lambda_2^j\beta_1^{j-k-\alpha_1}\beta_2^{-j-\alpha_2}\nonumber\\
	&\quad\times\bigg(\frac{1}{\beta_1}+\frac{1}{\beta_2}\bigg)^{1-\alpha_1-\alpha_2-k}U\bigg(1-\alpha_2-j,2-\alpha_1-\alpha_2-k,-\bigg(\frac{1}{\beta_1}+\frac{1}{\beta_2}\bigg)x\bigg).\nonumber
\end{align}
This completes the derivation of the series representation (\ref{diff}).

The derivation of the series representation \eqref{sum} for the PDF of the sum $X_1+X_2$ is similar. The characteristic function of $X_1+X_2$ can be expressed as
\begin{align}
	\phi_{X_1+X_2}(t)&=(1-\mathrm{i}\beta_1t)^{-\alpha_1}(1-\mathrm{i}\beta_2t)^{-\alpha_2}\exp\bigg(-\lambda_1-\lambda_2+\frac{\lambda_1}{1-\mathrm{i}\beta_1t}+\frac{\lambda_2}{1-\mathrm{i}\beta_2t}\bigg)\nonumber \\
&=\mathrm{e}^{-\lambda_1-\lambda_2}\sum_{k=0}^{\infty}\sum_{j=0}^{k}\frac{1}{k!}\binom{k}{j}\lambda_1^{k-j}\lambda_2^j(1-\mathrm{i}\beta_1t)^{j-k-\alpha_1}(1-\mathrm{i}\beta_2t)^{-j-\alpha_2}. \nonumber
\end{align}
An application of the inversion theorem gives
\begin{align*}
	f_{X_1+X_2}(x)&=\frac{1}{2\pi}\exp\big(-(\lambda_1+\lambda_2)\big)\sum_{k=0}^{\infty}\sum_{j=0}^{k}\frac{1}{k!}\binom{k}{j}\lambda_1^{k-j}\lambda_2^{j}\beta_1^{j-k-\alpha_1}\beta_2^{-j-\alpha_2}\nonumber\\
	&\quad\times\int_{-\infty}^{\infty}\mathrm{e}^{-\mathrm{i}tx}\bigg(\frac{1}{\beta_1}-\mathrm{i}t\bigg)^{j-k-\alpha_1}\bigg(\frac{1}{\beta_2}-\mathrm{i}t\bigg)^{-j-\alpha_2}\,\mathrm{d}t, \quad x>0,
\end{align*}
and evaluating the integral using the integral formula (\ref{int2}) yields the series representation (\ref{sum}).
\end{proof}

In proving Proposition \ref{prop1}, we will make use of the following lemma, which provides a useful finite series representation for the function $U(-n,b,x)$ when $n$ is a non-negative integer.

\begin{lemma}\label{lem55} Let $n$ be a non-negative integer and let $b\in\mathbb{R}$ and $x\in\mathbb{R}$. Then
\begin{align}\label{ulul}
U(-n,b,x)=x^n\sum_{k=0}^n\frac{(-n)_k(-n-b+1)_k}{k!}(-x)^{-k}.    
\end{align}    
\end{lemma}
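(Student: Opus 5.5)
We want $U(-n,b,x)=x^n\sum_{k=0}^n\frac{(-n)_k(-n-b+1)_k}{k!}(-x)^{-k}$ for a non-negative integer $n$. The plan is to combine two standard facts from \cite[Chapter 13]{olver}: the Kummer transformation $U(a,b,x)=x^{1-b}U(a-b+1,2-b,x)$, and the fact that $U(a,b,x)$ is a polynomial when $a$ is a non-positive integer. The cleanest route is to start from the asymptotic (generalized hypergeometric) representation
\[
U(a,b,x)\sim x^{-a}\sum_{k=0}^\infty\frac{(a)_k(a-b+1)_k}{k!}(-x)^{-k},\qquad x\to\infty,
\]
(see \cite[Eq.\ 13.7.3]{olver}).

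When $a=-n$, the factor $(a)_k=(-n)_k$ vanishes for every $k\ge n+1$. The series therefore terminates, and its partial sum is exactly the right-hand side of (\ref{ulul}). It remains to show that this terminating expression is an exact identity and not merely asymptotic.

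For this I would use the exact formula $U(-n,b,x)=(-1)^n(b)_n\,M(-n,b,x)=(-1)^n n!\,L_n^{(b-1)}(x)$ \cite[Eq.\ 13.6.19]{olver}, valid for all $b$ by continuity in $b$ since both sides are polynomial in $b$. Then I expand $L_n^{(b-1)}(x)=\sum_{m=0}^n\frac{(b+m)_{n-m}}{(n-m)!\,m!}(-x)^m$. Reindexing with $k=n-m$ should match the coefficients. Using $(-n)_k=(-1)^k n!/(n-k)!$, the coefficient of $x^{n-k}$ on the right-hand side of (\ref{ulul}) is
\[
\frac{(-1)^k(-n)_k(1-n-b)_k}{k!}=\frac{n!\,(1-n-b)_k}{k!\,(n-k)!}.
\]
On the left, the coefficient is
\[
(-1)^n n!\,(-1)^{n-k}\frac{(b+n-k)_k}{k!\,(n-k)!}.
\]
These agree by the reflection identity $(1-n-b)_k=(-1)^k(b+n-k)_k$.

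Alternatively, one can verify directly that the polynomial on the right-hand side satisfies Kummer's equation $xw''+(b-x)w'+nw=0$. A polynomial solution of degree $n$ with leading coefficient $1$ grows only algebraically, so it must be the solution $U(-n,b,\cdot)$, whose normalization is $U\sim x^{n}$.

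The only delicate point is the case $b$ a non-positive integer (and more generally all real $b$ and all real $x$, including $x\le0$). I would handle this by polynomial continuation: both sides are polynomials in $x$ and in $b$, identical for $x>0$ and generic $b$, hence identical everywhere. The main obstacle is just the Pochhammer sign bookkeeping in the coefficient comparison.
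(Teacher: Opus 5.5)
Your proof is correct and is essentially the paper's argument. Both reduce to $U(-n,b,x)=(-1)^n n!\,L_n^{(b-1)}(x)$ and match this against the explicit Laguerre sum using $(-n)_k=(-1)^k n!/(n-k)!$ and the reflection $(1-n-b)_k=(-1)^k(b+n-k)_k$; the paper writes this as a chain of equalities after reindexing $j=n-k$, while you compare coefficients of $x^{n-k}$, and the terminating asymptotic series and Kummer-equation remarks are unnecessary extras.
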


\begin{proof}
We begin by noting that $U(-n,b,x)=(-1)^nn!L_n^{(b-1)}(x)$ when $n$ is a non-negative integer (see \cite[Eq.\ 13.6.19]{olver}). We will prove the equality (\ref{ulul}) by proving that the right-hand side of the equality is equal to $(-1)^nn!L_n^{(b-1)}(x)$. Using basic identities involving the Pochhammer symbol we obtain that
\begin{align*}
x^n\sum_{k=0}^n\frac{(-n)_k(-n-b+1)_k}{k!}(-x)^{-k}&=\sum_{k=0}^n\frac{n!}{(n-k)!}\frac{(-n-b+1)_k}{k!}x^{n-k} \\
&=n!\sum_{j=0}^n \frac{(-n-b+1)_{n-j}}{(n-j)!j!}x^j\\
&=(-1)^nn!\sum_{j=0}^n\frac{(b+j)_{n-j}}{(n-j)!j!}(-x)^j \\
&=(-1)^nn! L_n^{(b-1)}(x),
\end{align*}
where in the first step we used that $(-n)_k=(-1)^kn!/(n-k)!$, in the third step we used that $(-n-b+1)_{n-j}=(-1)^{n-j}(b+j)_{n-j}$, and in the final step we recognised the series as a generalized Laguerre polynomial using \cite[Eq.\ 18.5.12]{olver}. This proves the lemma.
\end{proof}

\begin{proof}[Proof of Proposition \ref{prop1}] 1. To ease notation, we let $z=(\lambda_2/\beta_2)({1}/{\beta_1}+{1}/{\beta_2})^{-1}$ and 
	\begin{align*}
		N(x)=\frac{\beta_1^{-n}\beta_2^{-\alpha_2}}{(n-1)!}\bigg(\frac{1}{\beta_1}+\frac{1}{\beta_2}\bigg)^{-\alpha_2}x^{n-1}\mathrm{e}^{-\lambda_2-x/\beta_1}.
	\end{align*}
Then from the infinite series representation (\ref{diff0}) of the PDF of $X_1-X_2$ and formula (\ref{ulul}) of Lemma \ref{lem55} we have that, for $x>0$,
	\begin{align}
		f_{X_1-X_2}(x)= N(x)\sum_{k=0}^{\infty}\frac{z^k}{k!}\sum_{s=0}^{n-1}\frac{(1-n)_s(\alpha_2+k)_s}{s!}\bigg(-x\bigg(\frac{1}{\beta_1}+\frac{1}{\beta_2}\bigg)\bigg)^{-s}. \label{comp1}
			\end{align}
		By interchanging the order of summation we obtain that
\begin{align}
			f_{X_1-X_2}(x)&= N(x)\sum_{s=0}^{n-1}(-1)^s\frac{(1-n)_s}{s!}\bigg(x\bigg(\frac{1}{\beta_1}+\frac{1}{\beta_2}\bigg)\bigg)^{-s}\sum_{k=0}^{\infty}\frac{z^k}{k!}(\alpha_2+k)_s\nonumber\\
			&=N(x)\sum_{s=0}^{\infty}(-1)^s\bigg(x\bigg(\frac{1}{\beta_1}+\frac{1}{\beta_2}\bigg)\bigg)^{-s}\frac{(1-n)_s(\alpha_2)_s}{s!}\sum_{k=0}^{\infty}\frac{(\alpha_2+s)_k}{(\alpha_2)_k}\frac{z^k}{k!}\nonumber\\
			&=N(x)\sum_{s=0}^{n-1}(-1)^s\bigg(x\bigg(\frac{1}{\beta_1}+\frac{1}{\beta_2}\bigg)\bigg)^{-s}\frac{(1-n)_s(\alpha_2)_s}{s!}M(\alpha_2+s,\alpha_2,z)\nonumber\\
			&=N(x)\mathrm{e}^z\sum_{s=0}^{n-1}\frac{h_s}{x^s}\nonumber\\
            &=\frac{\beta_1^{-n}\beta_2^{-\alpha_2}}{(n-1)!}\bigg(\frac{1}{\beta_1}+\frac{1}{\beta_2}\bigg)^{-\alpha_2}x^{n-1}\exp\bigg(-\frac{x}{\beta_1}-\frac{\lambda_2\beta_2}{\beta_1+\beta_2}\bigg)\sum_{s=0}^{n-1}\frac{h_s}{x^s},\label{comp1b}
		\end{align}
		where
\begin{align*}
h_s=(-1)^s\bigg(\frac{1}{\beta_1}+\frac{1}{\beta_2}\bigg)^{-s}\frac{(1-n)_s(\alpha_2)_s}{s!}\mathrm{e}^{-z}M(\alpha_2+s,\alpha_2,z).    
\end{align*}
Here, in the third step, we have evaluated the sum over $k$ by using the hypergeometric series representation of the function $M(a,b,x)$ as given in \cite[Eq.\ 13.2.2]{olver}. Using Kummer's transformation 
\begin{equation}\label{kummerm} M(a,b,x)=\mathrm{e}^xM(b-a,b,-x)
\end{equation}
(see \cite[Eq.\ 13.2.39]{olver}) and the relation 
\begin{equation}\label{smsm}
 (\alpha)_nM(-n,\alpha,x)=n!L_n^{(\alpha-1)}(x), \quad n=0,1,2,\ldots,   
\end{equation}
(see \cite[Eq.\ 13.6.19]{olver}) we obtain that        
 \begin{align}
h_s&=(-1)^s\bigg(\frac{1}{\beta_1}+\frac{1}{\beta_2}\bigg)^{-s}\frac{(1-n)_s(\alpha_2)_s}{s!}M(-s,\alpha_2,-z) \nonumber\\
&=(-1)^s\bigg(\frac{1}{\beta_1}+\frac{1}{\beta_2}\bigg)^{-s}(1-n)_s L_s^{(\alpha_2-1)}(-z)\nonumber\\
&=(-1)^s\bigg(\frac{1}{\beta_1}+\frac{1}{\beta_2}\bigg)^{-s}(1-n)_s L_s^{(\alpha_2-1)}\bigg(-\frac{\lambda_2\beta_1}{\beta_1+\beta_2}\bigg)=u_s(n,\alpha_2,\beta_1,\beta_2,\lambda_2).\label{comp1c}
 \end{align}       
This completes the proof of the series representation (\ref{rprp}).

\vspace{3mm}

\noindent 2. From the reasoning of Remark \ref{remdiff0}, formula (\ref{rprp2}) follows on replacing $(x,n, \alpha_2,\beta_1,\beta_2,\lambda_2)$ by $(-x,n,\alpha_1,$ $\beta_2,\beta_1,\lambda_1)$ in formula (\ref{rprp}).
\end{proof}

\begin{proof}[Proof of Theorem \ref{thm2.3}] In the case $\lambda_1=\lambda_2=\lambda$ and $\beta_1=\beta_2=\beta$, the characteristic function (\ref{cfdiff}) reduces to 
	\begin{align*}
		\phi_{X_1-X_2}(t)&=\frac{1}{(1+\beta^2t^2)^{\alpha_1+\alpha_2}}\exp\bigg(-2\lambda+\frac{2\lambda}{1+\beta^2t^2}\bigg)=\mathrm{e}^{-2\lambda}\sum_{k=0}^{\infty}\frac{(2\lambda)^k}{k!(1+\beta^2t^2)^{\alpha_1+\alpha_2+k}}.	
	\end{align*}
	Then, by the inversion theorem, the PDF of $X_1-X_2$ can be expressed as 
	\begin{align*}
		f_{X_1-X_2}(x)&=\frac{1}{2\pi}\mathrm{e}^{-2\lambda}\sum_{k=0}^{\infty}\frac{1}{k!}(2\lambda)^k\int_{-\infty}^{\infty}\frac{\mathrm{e}^{-\mathrm{i}xt}}{(1+\beta^2t^2)^{\alpha_1+\alpha_2+k}}\,\mathrm{d}t, \quad x\in\mathbb{R}.
	\end{align*}
	Evaluating the integral using the integral formula \eqref{int3} and then applying the standard relation $W_{0,\nu}(x)=\sqrt{x/\pi}K_\nu(x/2)$ (see \cite[Eq.\ 10.39.8]{olver}) yields the series representation (\ref{thm2.31}). 
\end{proof}

\begin{proof}[Proof of Theorem \ref{thm4.1}.] 1. Since the random variables $X_1$ and $X_2$ are independent, by the convolution theorem, we have that, for $x>0$, 
\begin{align}\label{convo1}
f_{X_1-X_2}(x)=\int_0^\infty f_{X_1}(y+x)f_{X_2}(y)\,\mathrm{d}y   
\end{align}
and
\begin{align}\label{convo2}
f_{X_1+X_2}(x)=\int_0^x f_{X_1}(x-y)f_{X_2}(y)\,\mathrm{d}y.   
\end{align}
Plugging the expression (\ref{pdfncg}) for the PDFs of $X_1$ and $X_2$ into the integrals (\ref{convo1}) and (\ref{convo2}) and then making the substitution $y=xt$ now yields the integral representations  (\ref{diffint1}) and (\ref{sumint1}).

\vspace{3mm}

\noindent 2. \& 3. The integral representations given in parts 2 and 3 of the theorem are obtained similarly, but we now obtain the simpler expressions by using that $f_{X_1}(x)=\beta_1^{-\alpha_1}x^{\alpha_1-1}\mathrm{e}^{-x/\beta_1}/\Gamma(\alpha_1)$, $x>0$, if $\lambda_1=0$ and $f_{X_2}(x)=\beta_2^{-\alpha_2}x^{\alpha_2-1}\mathrm{e}^{-x/\beta_2}/\Gamma(\alpha_2)$, $x>0$, if $\lambda_2=0$, since in these cases $X_1$ and $X_2$ follow the (central) gamma distribution.

\vspace{3mm}

\noindent 4. Following Remark \ref{remdiff0}, the assertion in part 4 of the theorem follows since $X_1-X_2=_d -(X_1'-X_2')$, where $X_1'\sim\Gamma(\alpha_2,\beta_2,\lambda_2)$ and $X_2'\sim\Gamma(\alpha_1,\beta_1,\lambda_1)$ are independent.
	\end{proof}
	
	\begin{proof}[Proof of Theorem \ref{thm4.2}.] By applying the following integral representation of the modified Bessel function of the second kind
\begin{equation*}
	K_\nu(x)=\frac{1}{2}\bigg(\frac{x}{2}\bigg)^\nu\int_{0}^{\infty}t^{-\nu-1}\exp\bigg(-t-\frac{x^2}{4t}\bigg)\,\mathrm{d}t, \quad \nu\in\mathbb{R}
,\: x>0
\end{equation*} 
(see \cite[Eq.\ 10.32.10]{olver}) to the series representation (\ref{thm2.31})
followed by an interchange of the order of summation and integration we have that
		\begin{align}
			f_{X_1-X_2}(x)&=\frac{\mathrm{e}^{-2\lambda}}{2\beta\sqrt{\pi}}\bigg(\frac{|x|}{2\beta}\bigg)^{2(\alpha_1+\alpha_2-1/2)}\nonumber\\
			&\quad\times\int_{0}^{\infty}t^{-(\alpha_1+\alpha_2+1/2)}\exp\bigg(-t-\frac{x^2}{4\beta^2t}\bigg)\sum_{k=0}^{\infty}\frac{1}{k!\Gamma(\alpha_1+\alpha_2+k)}\bigg(\frac{\lambda x^2}{2\beta^2t}\bigg)^k\,\mathrm{d}t. \label{eee1}
		\end{align}
By comparison to the series representation of the modified Bessel function of the first kind (see \cite[Eq.\ 10.25.2]{olver}) we have that 
\begin{equation}
\sum_{k=0}^{\infty}\frac{1}{k!\Gamma(\alpha_1+\alpha_2+k)}\bigg(\frac{\lambda x^2}{2\beta^2t}\bigg)^k=\bigg(\frac{1}{\beta}\sqrt{\frac{\lambda}{2t}}|x|\bigg)^{1-\alpha_1-\alpha_2}I_{\alpha_1+\alpha_2-1}\bigg(\frac{1}{\beta}\sqrt{\frac{2\lambda}{t}}|x|\bigg),    \label{eee2}
\end{equation}
and by combining the expressions (\ref{eee1}) and (\ref{eee2}) we obtain the integral representation (\ref{thm4.21}). 
\end{proof}

In proving Propositions \ref{prop2.4} and \ref{prop2.5} we will make use of the following limiting forms for the confluent hypergeometric function of the second kind (see \cite[Section 13.2(iii)]{olver}). For $a=-n$ or $a=-n+b-1$, where $n$ is a non-negative integer, the following limiting forms hold. As $x\rightarrow0$,
\begin{align}
 U(-n,b,x)&\sim (-1)^n(b)_n, \label{U04}\\
U(-n+b-1,b,x)&\sim (-1)^n(2-b)_nx^{1-b}.
\end{align}
Otherwise, the following limiting forms hold. As $x\rightarrow0$,
\begin{align}
	U(a,b,x)&\sim\frac{\Gamma(1-b)}{\Gamma(a-b+1)},\quad b<1,\label{U02}\\
    	U(a,1,x)&\sim-\frac{\ln|x|}{\Gamma(a)}, \label{U01} \\
        U(a,b,x)&\sim\frac{\Gamma(b-1)}{\Gamma(a)}x^{1-b}, \quad b\geq1. \label{U03} 
\end{align}

\begin{proof}[Proof of Proposition \ref{prop2.4}] 1. Suppose that $\alpha_1+\alpha_2<1$. From the limiting forms (\ref{U04})--(\ref{U02}) and (\ref{U03}) it can be seen that the term in the double infinite series (\ref{diff}) with $j=k=0$ is of order $|x|^{\alpha_1+\alpha_2-1}$ as $x\rightarrow0$ and that in this limit all other terms are of lower order. Thus, as $x\rightarrow0$,
\begin{align}\label{hbb}
f_{X_1-X_2}(x)\sim \beta_1^{-\alpha_1}\beta_2^{-\alpha_2}\mathrm{e}^{-\lambda_1-\lambda_2}\frac{\Gamma(1-\alpha_1-\alpha_2)}{\Gamma(a_{0,0}(x))\Gamma(1-a_{0,0}(x))}\,|x|^{\alpha_1+\alpha_2-1}.   
\end{align}
The limiting form (\ref{diff0lim1}) is now obtained by applying the reflection formula $\Gamma(z)\Gamma(1-z)=\pi/\sin(\pi z)$ (see \cite[Eq.\ 5.5.3]{olver}) to the limiting form (\ref{hbb}).

\vspace{3mm}
    
\noindent 2. Suppose that $\alpha_1+\alpha_2=1$. By a similar argument to the one we employed in part 1 of the proof, the dominate term in the double infinite series (\ref{diff}) is the one for which $j=k=0$. On applying the limiting form (\ref{U01}) to the term with $j=k=0$ followed by an application of the reflection formula $\Gamma(z)\Gamma(1-z)=\pi/\sin(\pi z)$ we obtain the limiting form (\ref{diff0lim2}).

\vspace{3mm}
    
\noindent 3. Suppose that $\alpha_1+\alpha_2\leq1$. From the limiting forms (\ref{diff0lim1}) and (\ref{diff0lim2}) it follows that the PDF of the difference $X_1-X_2$ has a singularity at $x=0$. Now, since the function $U(a,b,x)$ is bounded for $x\not=0$, it follows that the PDF $f_{X_1-X_2}(x)$ is bounded for $x\not=0$, meaning that the distribution of $X_1-X_2$ has a unique mode at $x=0$.

\vspace{3mm}

\noindent 4. Suppose that $\alpha_1+\alpha_2>1$. Taking the limit $x\downarrow0$ in the series representation (\ref{diff}) for the PDF of $X_1-X_2$ using the limiting form (\ref{U02}) gives that
\begin{align*}
f_{X_1-X_2}(0)&=\beta_1^{-\alpha_1}\beta_2^{-\alpha_2}\mathrm{e}^{-\lambda_1-\lambda_2}\\
&\quad\times\sum_{k=0}^\infty\sum_{j=0}^k\frac{\Gamma(\alpha_1+\alpha_2-1+k)}{k!\Gamma(\alpha_1+k-j)\Gamma(\alpha_2+j)}\binom{k}{j}\bigg(\frac{\lambda_1}{\beta_1}\bigg)^k\bigg(\frac{\lambda_2\beta_1}{\lambda_1\beta_2}\bigg)^j\bigg(\frac{1}{\beta_1}+\frac{1}{\beta_2}\bigg)^{1-\alpha_1-\alpha_2-k}.    
\end{align*}
On making the change of index $m=k-j$ and then simplifying the resulting expression by writing out the binomial coefficient in terms of factorials and using the basic relation $\Gamma(a+n)=(a)_n \,\Gamma(a)$ we obtain the expression
\begin{align*}
f_{X_1-X_2}(0)&=\mathrm{e}^{-\lambda_1-\lambda_2} \beta_1^{-\alpha_1}\beta_2^{-\alpha_2}\bigg(\frac{1}{\beta_1}+\frac{1}{\beta_2}\bigg)^{1-\alpha_1-\alpha_2} \frac{\Gamma(\alpha_1+\alpha_2-1)}{\Gamma(\alpha_1)\Gamma(\alpha_2)}\\
&\quad\times \sum_{m=0}^\infty\sum_{j=0}^\infty \frac{(\alpha_1+\alpha_2-1)_{m+j}}{(\alpha_1)_m(\alpha_2)_j}\frac{1}{m!j!}\bigg(\frac{\lambda_1\beta_2}{\beta_1+\beta_2}\bigg)^m\bigg(\frac{\lambda_2\beta_1}{\beta_1+\beta_2}\bigg)^j,
\end{align*}
and by comparison to the power series representation (\ref{horn}) of the Horn function $\Psi_2$ we obtain the desired formula (\ref{hornf}).

\vspace{3mm}
    
\noindent 5. Suppose that $\alpha_1+\alpha_2>1$. From part 4, we know that the PDF of $X_1-X_2$ is bounded at $x=0$, and since the function $U(a,b,x)$ is bounded for $x\not=0$ it follows that the PDF $f_{X_1-X_2}(x)$ is bounded for all $x\in\mathbb{R}$.
\end{proof}

\begin{proof}[Proof of Proposition \ref{prop2.5}] 1. Since $\lim_{x\rightarrow0}M(a,b,x)=1$ (see \cite[Eq.\ 13.2.13]{olver}), it follows that as $x\rightarrow0$ the dominate term in the double infinite series (\ref{sum}) is the term for which $j=k=0$. From this observation we thus arrive at the limiting form (\ref{lim7}).

\vspace{3mm}
    
\noindent 2.\ \& 3. The proofs of parts 2 and 3 are similar to the proofs of parts 3 and 4 of Proposition \ref{prop2.4}, and we omit the details.
\end{proof}    

\begin{proof}[Proof of Theorem \ref{thm3.1}] 1. Suppose that $\lambda_1\not=0$. We begin by considering the PDF of the difference $X_1-X_2$. By applying the asymptotic expansion
\begin{equation}\label{asyu}
U(a,b,x)\sim x^{-a}\sum_{s=0}^{\infty}\frac{(a)_s(a-b+1)_s}{s!}(-x)^{-s}, \quad x\rightarrow\infty,    
\end{equation}
(see \cite[Eq.\ 13.7.1]{olver}) to the series representation (\ref{diff}) (with a re-indexing $m=k-j$) of the PDF of $X_1-X_2$ we obtain that, as $x\rightarrow\infty$,
		\begin{align}\label{iv1}
			f_{X_1-X_2}(x)\sim N_-(x)\sum_{m=0}^{\infty}\sum_{j=0}^\infty\frac{z_-^j(yx)^m}{j!m!(\alpha_1)_m}\sum_{s=0}^{\infty}(-1)^s\frac{(1-\alpha_1-m)_s(\alpha_2+j)_s}{s!}(w_-x)^{-s},
		\end{align}
		where $w_-=1/\beta_1+1/\beta_2$, $y=\lambda_1/\beta_1$, $z_-=(\lambda_2/\beta_2)({1}/{\beta_1}+{1}/{\beta_2})^{-1}$ and 
		\begin{align*}
        N_-(x)=\frac{\beta_1^{-\alpha_1}\beta_2^{-\alpha_2}}{\Gamma(\alpha_1)}\bigg(\frac{1}{\beta_1}+\frac{1}{\beta_2}\bigg)^{-\alpha_2}x^{\alpha_1-1}\mathrm{e}^{-\lambda_1-\lambda_2-x/\beta_1}.
		\end{align*}
        
We now obtain a similar intermediate asymptotic expansion for the PDF of the sum $X_1+X_2$. By Kummer's transformation (\ref{kummerm}) and the following asymptotic expansion
\begin{equation}\label{asym}
M(a,b,x)\sim\frac{\mathrm{e}^{x}x^{a-b}\Gamma(b)}{\Gamma(a)}\sum_{s=0}^{\infty}\frac{(1-a)_s(b-a)_s}{s!}x^{-s}, \quad x\rightarrow\infty, \:a\notin\{0,-1,-2,\ldots\},    
\end{equation}
(see \cite[Eq.\ 13.7.1]{olver}) we get that, as $x\rightarrow\infty$,
\begin{align}
&M\bigg(\alpha_2+j,\alpha_1+\alpha_2+m+j,\bigg(\frac{1}{\beta_1}-\frac{1}{\beta_2}\bigg)x\bigg)\nonumber\\
&\quad=\exp\bigg\{\bigg(\frac{1}{\beta_1}-\frac{1}{\beta_2}\bigg)x\bigg\}M\bigg(\alpha_1+m,\alpha_1+\alpha_2+m+j,\bigg(\frac{1}{\beta_2}-\frac{1}{\beta_1}\bigg)x\bigg) \nonumber \\
&\quad\sim \frac{\Gamma(\alpha_1+\alpha_2+m+j)}{\Gamma(\alpha_1+m)}\bigg(\bigg(\frac{1}{\beta_2}-\frac{1}{\beta_1}\bigg)x\bigg)^{-\alpha_2-j}\sum_{s=0}^{\infty}\frac{(1-\alpha_1-m)_s(\alpha_2+j)_s}{s!}\bigg(\bigg(\frac{1}{\beta_2}-\frac{1}{\beta_1}\bigg)x\bigg)^{-s}, \label{eswdxw}
\end{align}
where we were able to apply the asymptotic expansion (\ref{asym}) since $\beta_1>\beta_2$. Applying the asymptotic expansion (\ref{eswdxw}) to the series representation (\ref{sum}) of the PDF of $X_1+X_2$ gives that, as $x\rightarrow\infty$,
\begin{align}\label{iv2}
			f_{X_1+X_2}(x)\sim N_+(x)\sum_{m=0}^{\infty}\sum_{j=0}^\infty\frac{z_+^j(yx)^m}{j!m!(\alpha_1)_m}\sum_{s=0}^{\infty}(-1)^s\frac{(1-\alpha_1-m)_s(\alpha_2+j)_s}{s!}(w_+x)^{-s},
			\end{align}
where $w_+=1/\beta_1-1/\beta_2$, $z_+=(\lambda_2/\beta_2)({1}/{\beta_2}-{1}/{\beta_1})^{-1}$ and
\begin{align*}
N_+(x)=\frac{\beta_1^{-\alpha_1}\beta_2^{-\alpha_2}}{\Gamma(\alpha_1)}\bigg(\frac{1}{\beta_2}-\frac{1}{\beta_1}\bigg)^{-\alpha_2}x^{\alpha_1-1}\mathrm{e}^{-\lambda_1-\lambda_2-x/\beta_1}.
\end{align*}

From (\ref{iv1}) and (\ref{iv2}) we obtain the unified intermediate asymptotic expansion: as $x\rightarrow\infty$,
\begin{align*}
			f_{X_1\mp X_2}(x)&\sim N_\mp(x)\sum_{m=0}^{\infty}\sum_{j=0}^\infty\frac{z_{\mp}^j(yx)^m}{j!m!(\alpha_1)_m}\sum_{s=0}^{\infty}(-1)^s\frac{(1-\alpha_1-m)_s(\alpha_2+j)_s}{s!}(w_\mp x)^{-s}\\
            &=N_{\mp}(x)\sum_{m=0}^{\infty}\sum_{j=0}^\infty\sum_{s=0}^\infty (-1)^s\frac{(1-\alpha_1)_s(\alpha_2)_s(\alpha_2+s)_j}{(\alpha_1-s)_m(\alpha_2)_j}\frac{z_{\mp}^j(yx)^m}{j!m!s!}(w_{\mp}x)^{-s},
			\end{align*}   
where in obtaining the equality we used the relations
\begin{align*}
(1-\alpha_1-m)_s=\frac{(1-\alpha_1)_s(\alpha_1)_m}{(\alpha_1-s)_m}, \quad (\alpha_2+j)_s=\frac{(\alpha_2)_s}{(\alpha_2)_j}(\alpha_2+s)_j.    
\end{align*}            
Now, interchanging the order of summation gives that, as $x\rightarrow\infty$,
		\begin{align}
			f_{X_1\mp X_2}(x) \sim N_\mp(x)\sum_{s=0}^{\infty}\frac{(-1)^s(\alpha_2)_s(1-\alpha_1)_s}{s!}(w_{\mp}x)^{-s}\sum_{m=0}^{\infty}\frac{(yx)^m}{m!(\alpha_1-s)_m}\sum_{j=0}^\infty\frac{(\alpha_2+s)_j}{(\alpha_2)_j}\frac{z_{\mp}^j}{j!}. \label{plug}
            \end{align}

We now evaluate the infinite series over the index $j$:
\begin{align}\label{plug1}
\sum_{j=0}^\infty\frac{(\alpha_2+s)_j}{(\alpha_2)_j}\frac{z_{\mp}^j}{j!}=M(\alpha_2+s,\alpha_2,z_{\mp})   =\mathrm{e}^{z_\mp}M(-s,\alpha_2,-z_{\mp})=\mathrm{e}^{z_\mp}\frac{s!}{(\alpha_2)_s}L_s^{(\alpha_2-1)}(-z_{\mp}),
\end{align}
where we obtained the second equality using Kummer's transformation (\ref{kummerm}) and obtained the third equality using the relation (\ref{smsm}). We now move on to evaluate the infinite series over the index $m$:
\begin{align}
\sum_{m=0}^{\infty}\frac{(yx)^m}{m!(\alpha_1-s)_m}&={}_0F_1(-;\alpha_1-s;yx)\nonumber\\
&=\Gamma(\alpha_1-s)\,(yx)^{(1-\alpha_1+s)/2}I_{\alpha_1-s-1}\big(2\sqrt{yx}\big) \nonumber  \\
&=\frac{(-1)^s\Gamma(\alpha_1)}{(1-\alpha_1)_s}(yx)^{(1-\alpha_1+s)/2}I_{\alpha_1-s-1}\big(2\sqrt{yx}\big), \label{plug2}
\end{align}
where we used the relation ${}_0F_1(-;b;x)=\Gamma(b)x^{(1-b)/2}I_{b-1}(2\sqrt{x})$ (see \cite[Eq.\ 10.39.9]{olver}), and applied the basic relation $(1-a)_n\Gamma(a-n)=(-1)^n\Gamma(a)$ in the last step. Plugging the evaluations (\ref{plug1}) and (\ref{plug2}) into (\ref{plug}) gives that, as $x\rightarrow\infty$,
\begin{align}
f_{X_1\mp X_2}(x)\sim \Gamma(\alpha_1)\mathrm{e}^{z_{\mp}}N_{\mp}(x)\sum_{s=0}^{\infty}L_s^{(\alpha_2-1)}(-z_{\mp})(w_{\mp}x)^{-s}(yx)^{(1-\alpha_1+s)/2}I_{\alpha_1-s-1}\big(2\sqrt{yx}\big).   \label{fdiff1}
\end{align}
We will now make use of the following asymptotic expansion of the modified Bessel function of the first kind:
\begin{align}
\label{Iab}
	I_\nu(x)&\sim\frac{\mathrm{e}^x}{(2\pi x)^{1/2}}\sum_{k=0}^{\infty}\frac{(1/2+\nu)_k(1/2-\nu)_k}{k!\,(2x)^k}, \quad x\rightarrow\infty,\: \nu\in\mathbb{R},
\end{align}
(see \cite[Eq.\ 10.40.1]{olver}). Applying the asymptotic expansion (\ref{Iab}) to \eqref{fdiff1} now gives that, as $x\rightarrow\infty$,
		\begin{align}\label{fdiff2}
			f_{X_1\mp X_2}(x)&\sim \tilde{N}_{\mp}(x)\sum_{s=0}^{\infty} y^{s/2}L_s^{(\alpha_2-1)}(z_\mp)\,(w_\mp^2x)^{-s/2}\sum_{k=0}^{\infty}\frac{(3/2+s-\alpha_1)_k(\alpha_1-s-1/2)_k}{4^k\!k!(yx)^{k/2}},
		\end{align}
		where
		\begin{align*}
			\tilde{N}_{\mp}(x)&=\frac{\Gamma(\alpha_1)}{2\sqrt{\pi}}\mathrm{e}^{z\mp}N_{\mp}(x)\,(yx)^{(1-2\alpha_1)/4}\mathrm{e}^{2\sqrt{yx}}\\
            &=\frac{1}{2\sqrt{\pi}}\bigg(\frac{1}{\beta_2}\pm\frac{1}{\beta_1}\bigg)^{-\alpha_2}\beta_1^{-\alpha_1}\beta_2^{-\alpha_2}\bigg(\frac{\lambda_1}{\beta_1}\bigg)^{(1-2\alpha_1)/4}\exp\bigg(-\lambda_1-\frac{\lambda_2\beta_2}{\beta_2\pm\beta_1}\bigg)\nonumber\\
		&\quad\times x^{(2\alpha_1-3)/4}\exp\bigg(2\sqrt{\frac{\lambda_1x}{\beta_1}}-\frac{x}{\beta_1}\bigg).
		\end{align*}
		Finally, applying the Cauchy product formula to \eqref{fdiff2}, we get the asymptotic expansion
\begin{align*}
f_{X_1\mp X_2}(x)\sim \tilde{N}_{\mp}(x)\sum_{l=0}^\infty \frac{c_l}{x^{l/2}}, \quad x\rightarrow\infty,
\end{align*}        
where, for $l\geq0$,
\begin{align*}
c_l=y^{-l/2}\sum_{j=0}^lL_j^{(\alpha_2-1)}(z_\mp)\frac{(3/2+j-\alpha_1)_{l-j}(\alpha_1-j-1/2)_{l-j}}{(l-j)!4^{l-j}}\bigg(\frac{y}{w_\mp}\bigg)^{j},    
\end{align*}
which is the desired asymptotic expansion \eqref{thm3.11} with coefficients $c_l$, $l\geq1$, as given by the expression (\ref{c1for}).
	
\vspace{3mm}

\noindent 2. Suppose that $\lambda_1=0$. We begin by considering the asymptotic expansion for the PDF of $X_1-X_2$. Applying the asymptotic expansion (\ref{asyu}) to the confluent hypergeometric function of the second kind in the series representation (\ref{diff0}) of the PDF of $X_1-X_2$ gives that
	\begin{align}
		f_{X_1-X_2}(x)\sim N_-(x)\sum_{k=0}^{\infty}\frac{z_-^k}{k!}\sum_{s=0}^{\infty}\frac{(1-\alpha_1)_s(\alpha_2+k)_s}{s!}\bigg(-\bigg(\frac{1}{\beta_1}+\frac{1}{\beta_2}\bigg)x\bigg)^{-s}, \quad x\rightarrow\infty, \label{int177}
			\end{align}
where $z_-=({\lambda_2}/{\beta_2})({1}/{\beta_1}+{1}/{\beta_2})^{-1}$. We now obtain a similar intermediate asymptotic expansion for the PDF of the sum $X_1+X_2$. 
By applying Kummer's transformation (\ref{kummerm}) and the asymptotic expansion (\ref{asym}) we get that, as $x\rightarrow\infty$,
\begin{align}
&M\bigg(\alpha_2+k,\alpha_1+\alpha_2+k,\bigg(\frac{1}{\beta_1}-\frac{1}{\beta_2}\bigg)x\bigg)\nonumber\\
&\quad=\exp\bigg\{\bigg(\frac{1}{\beta_1}-\frac{1}{\beta_2}\bigg)x\bigg\}M\bigg(\alpha_1,\alpha_1+\alpha_2+k,\bigg(\frac{1}{\beta_2}-\frac{1}{\beta_1}\bigg)x\bigg) \nonumber \\
&\quad\sim \frac{\Gamma(\alpha_1+\alpha_2+k)}{\Gamma(\alpha_1)}\bigg(\bigg(\frac{1}{\beta_2}-\frac{1}{\beta_1}\bigg)x\bigg)^{-\alpha_2-k}\sum_{s=0}^{\infty}\frac{(1-\alpha_1)_s(\alpha_2+k)_s}{s!}\bigg(\bigg(\frac{1}{\beta_2}-\frac{1}{\beta_1}\bigg)x\bigg)^{-s}, \label{vjfdvb}
\end{align}
where we were able to apply the asymptotic expansion (\ref{asym}) since $\beta_1>\beta_2$. Applying the asymptotic expansion (\ref{vjfdvb}) to the series representation (\ref{sum2}) of the PDF of $X_1+X_2$ now gives that
\begin{align} \label{int277}
			f_{X_1+X_2}(x)\sim N_+(x)\sum_{k=0}^{\infty}\frac{z_+^k}{k!}\sum_{s=0}^{\infty}\frac{(1-\alpha_1)_s(\alpha_2+k)_s}{s!}\bigg(\bigg(\frac{1}{\beta_2}-\frac{1}{\beta_1}\bigg)x\bigg)^{-s}, \quad x\rightarrow\infty,
		\end{align}
where $z_+=(\lambda_2/\beta_2)(1/\beta_2-1/\beta_1)^{-1}$.        
From the asymptotic expansions (\ref{int177}) and (\ref{int277}) we obtain the following unified asymptotic expansion for $X_1\mp X_2$:
\begin{align}\label{comp2}
f_{X_1\mp X_2}(x)\sim N_\mp(x)\sum_{k=0}^{\infty}\frac{z_\mp^k}{k!}\sum_{s=0}^{\infty}\frac{(1-\alpha_1)_s(\alpha_2+k)_s}{s!}\bigg(-\bigg(\frac{1}{\beta_1}\pm\frac{1}{\beta_2}\bigg)x\bigg)^{-s}, \quad x\rightarrow\infty.    
\end{align}
By comparison between the asymptotic expansion (\ref{comp2}) and equation (\ref{comp1}) (with the coefficients in the series given by (\ref{comp1c})) we see that applying the same manipulations used to obtain the equality (\ref{comp1b}) from (\ref{comp1}) results in the asymptotic expansion		
\begin{align*}
f_{X_1\mp X_2}(x)\sim  \frac{\beta_1^{-\alpha_1}\beta_2^{-\alpha_2}}{\Gamma(\alpha_1)}\bigg(\frac{1}{\beta_2}\pm\frac{1}{\beta_1}\bigg)^{-\alpha_2}x^{\alpha_1-1}\exp\bigg(-\frac{x}{\beta_1}-\frac{\lambda_2\beta_2}{\beta_2\pm\beta_1}\bigg)\sum_{s=0}^{\infty}\frac{H_s}{x^s},  \quad x\rightarrow\infty,
\end{align*}	
where	
\begin{align*}
H_s=(-1)^s(1-\alpha_1)_s\bigg(\frac{1}{\beta_1}\pm\frac{1}{\beta_2}\bigg)^{-s}L_s^{(\alpha_2-1)}\bigg(-\frac{\lambda_2\beta_1}{\beta_1\pm\beta_2}\bigg),    
\end{align*}
which is the desired asymptotic expansion (\ref{thm3.12}).

\vspace{3mm}
        
\noindent 3 \& 4.	Following the reasoning given in Remark \ref{remdiff0}, the asymptotic expansion (\ref{thm3.11aa}) follows on replacing $(x,\alpha_1, \alpha_2,\beta_1,\beta_2,\lambda_1,\lambda_2)$ by $(-x,\alpha_2,\alpha_1,$ $\beta_2,\beta_1,\lambda_2,\lambda_1)$ in the asymptotic expansion (\ref{thm3.11}) for the PDF of $X_1-X_2$, whilst
the asymptotic expansion (\ref{thm3.14}) follows on replacing $(x,\alpha_1, \alpha_2,\beta_1,\beta_2,\lambda_2)$ by $(-x,\alpha_2,\alpha_1,$ $\beta_2,\beta_1,\lambda_1)$ in the asymptotic expansion (\ref{thm3.12}) for the PDF of $X_1-X_2$.
	\end{proof}  

In proving Theorems \ref{thm3.5} and \ref{thm3.7}, we will make use of two lemmas from \cite{gz25}, which we now recall.     

\begin{lemma}[Lemma 5.1 of \cite{gz25}] 
	\label{lemma5.1}
	1. Fix $a>0$, $b\in\mathbb{R}$, $m\in\mathbb{R}$ and let $u_0, u_1, \ldots $ be real-valued constants. Then, as $x\rightarrow\infty$,
	\begin{align}\nonumber
		\int_{x}^{\infty}t^m\exp\big(-at+b\sqrt{t}\big)\sum_{l=0}^{\infty}\frac{u_l}{t^{l/2}}\,\mathrm{d}t\sim\frac{x^{m}}{a}\exp\big(-ax+b\sqrt{x}\big)\sum_{p=0}^{\infty}\frac{U_p}{x^{p/2}},
	\end{align}
	where
	\begin{align*}
		U_p&=\sum_{\substack{i,j,k,l\geq0 \\ i+2j+k+l=p}}(-1)^{i+j}u_l\binom{2m+1-l}{k}\binom{2m-l-k-2j}{i}\frac{((l+k)/2-m)_j}{a^j}\bigg(\frac{b}{2a}\bigg)^{k+i}, \quad p\geq0.
	\end{align*}
	2. Fix $a>0$, $m\in\mathbb{R}$ and let $v_0,v_1,\ldots$ be real-valued constants. Then, as $x\rightarrow\infty$,
	\begin{equation}
		\int_{x}^{\infty}t^m\mathrm{e}^{-at}\sum_{j=0}^{\infty}\frac{v_j}{t^j}\,\mathrm{d}t\sim\frac{1}{a}x^m\mathrm{e}^{-ax}\sum_{k=0}^{\infty}\frac{V_k}{x^k}, \nonumber
	\end{equation}
	where $V_k=\sum_{j=0}^{k}v_j(k-m)_{k-j}/(-a)^{k-j}$ for $k\geq0$.
\end{lemma}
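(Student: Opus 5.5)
The plan is to treat both parts with the same device: construct a formal antiderivative whose derivative reproduces the integrand term by term, and then control the remainder after truncating the series. Part 2 is the special case $b=0$ with only integer powers, so I would do part 1 in detail and then specialise.

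\textbf{Part 1.} Set $E(t)=t^m\exp(-at+b\sqrt{t})$ and look for
\[
F(t)=\frac{1}{a}E(t)\sum_{p\ge0}U_p t^{-p/2}\quad\text{with}\quad -F'(t)=E(t)\sum_{l\ge0}u_l t^{-l/2}.
\]
Since $\frac{\mathrm{d}}{\mathrm{d}t}\big(t^{m-p/2}\mathrm{e}^{-at+b\sqrt t}\big)=\big(-a+\tfrac{b}{2}t^{-1/2}+(m-\tfrac p2)t^{-1}\big)t^{m-p/2}\mathrm{e}^{-at+b\sqrt t}$, comparing coefficients of $t^{-p/2}$ gives the recursion
\[
U_p-\frac{b}{2a}U_{p-1}-\frac{m-(p-2)/2}{a}\,U_{p-2}=u_p,
\]
with the convention $U_{-1}=U_{-2}=0$. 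This fixes all $U_p$ uniquely, and $U_0=u_0$.

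The main obstacle is showing that the closed-form $U_p$ in the statement solves this recursion. I would try the following first. Solving the recursion amounts to inverting the operator $1-\frac{b}{2a}t^{-1/2}-\frac{1}{a}(\cdot)t^{-1}$, where the last factor carries the shifting Pochhammer weight. Expanding that inverse as a geometric series, the $j$ factors of $t^{-1}$ collect into $((l+k)/2-m)_j(-1)^j/a^j$ and the $t^{-1/2}$ factors produce powers $(b/2a)^{k+i}$. The two binomial coefficients $\binom{2m+1-l}{k}\binom{2m-l-k-2j}{i}$ would come from organising these terms into binomial series. If that bookkeeping resists, the fallback is an induction on $p$: substitute the stated formula into the recursion and verify it with Pascal's rule $\binom{n}{k}=\binom{n-1}{k}+\binom{n-1}{k-1}$ together with $(c)_{j}=(c)_{j-1}(c+j-1)$.

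The analytic justification is routine. Truncate at order $N$ and let $F_N$ be the corresponding partial antiderivative. The truncated construction gives
\[
-F_N'(t)-E(t)\sum_{l\le N}u_lt^{-l/2}=O\big(E(t)\,t^{-(N+1)/2}\big).
\]
The tail of the $u$-series is also $O(E(t)t^{-(N+1)/2})$, this being the meaning of the asymptotic series in the integrand. To integrate these remainders I would use the estimate, valid for any real $r$,
\[
\int_x^\infty t^r\mathrm{e}^{-at+b\sqrt t}\,\mathrm{d}t=O\big(x^r\mathrm{e}^{-ax+b\sqrt x}\big),\qquad x\to\infty,
\]
which follows from L'Hôpital's rule, since the ratio of derivatives tends to $1/a$. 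Integrating the remainders from $x$ to $\infty$, and noting that $F_N(t)\to0$ as $t\to\infty$, gives
\[
\int_x^\infty(\cdots)\,\mathrm{d}t=F_N(x)+O\big(x^{m-(N+1)/2}\mathrm{e}^{-ax+b\sqrt x}\big),
\]
which is the claimed expansion.

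\textbf{Part 2.} Take $b=0$ and keep only integer powers of $t^{-1}$. The recursion becomes $V_k-\frac{m-k+1}{a}V_{k-1}=v_k$, and iterating it gives $V_k$ as a sum of $v_j$ multiplied by products of consecutive factors $(m-k+1),(m-k+2),\dots$ over $(-a)^{k-j}$, i.e.\ a Pochhammer symbol. Equivalently, one can integrate each term exactly via the incomplete gamma function, $\int_x^\infty t^{m-j}\mathrm{e}^{-at}\,\mathrm{d}t=a^{j-m-1}\Gamma(m-j+1,ax)$, apply its standard large-argument expansion, and collect powers of $x^{-k}$ with $k=j+n$. I would check that the product of consecutive factors matches the stated $(k-m)_{k-j}$ exactly; if the iteration instead yields a product with a different starting point, I would reconcile it with the stated indexing before concluding. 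The remainder is handled exactly as in part 1.
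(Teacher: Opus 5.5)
\textbf{Part 1: the closed form for $U_p$ is never established.} Your recursion $U_p=u_p+\frac{b}{2a}U_{p-1}+\frac{m-p/2+1}{a}U_{p-2}$ is correct, and so is your L'H\^opital remainder estimate. Together they show that the integral has an expansion of the stated shape, with recursively determined coefficients. But the content of the lemma is the explicit four-index formula for $U_p$, and that is what you leave unproved.

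The ``geometric series'' heuristic does not deliver it. The weight of a double step depends on its landing index through $m-p/2+1$, so the steps do not commute. Iterating the recursion writes the coefficient of $u_l$ in $U_p$ as a sum, over all orderings of unit and double steps from $l$ to $p$, of products of step weights. The stated formula instead carries alternating signs $(-1)^{i+j}$ and two interlocking binomials. The two agree only after Chu--Vandermonde cancellations with non-integer upper parameters; already the $j=0$ part needs $\sum_{k+i=r}(-1)^i\binom{2m+1-l}{k}\binom{2m-l-k}{i}=1$. Your Pascal-rule induction is only named, and over such a sum it is not routine.

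The missing idea is a change of variables that produces each factor of $U_p$ directly:
\begin{itemize}
\item[(i)] Put $t=u^2$. Complete the square, $-au^2+bu=-a(u-c)^2+ac^2$ with $c=b/(2a)$. With $v=u-c$, expand $u^{2m+1-l}=\sum_k\binom{2m+1-l}{k}c^kv^{2m+1-l-k}$.
\item[(ii)] Set $n=2m+1-l-k$ and $y=\sqrt{x}-c$. The incomplete-gamma expansion gives $\int_y^\infty v^n\mathrm{e}^{-av^2}\,\mathrm{d}v\sim\frac{1}{2a}y^{n-1}\mathrm{e}^{-ay^2}\sum_j(-1)^j\big(\tfrac{1-n}{2}\big)_j(ay^2)^{-j}$, where $\tfrac{1-n}{2}=\tfrac{l+k}{2}-m$.
\item[(iii)] Note $\mathrm{e}^{ac^2-ay^2}=\mathrm{e}^{-ax+b\sqrt{x}}$. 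The factor $2$ from $\mathrm{d}t=2u\,\mathrm{d}u$ combines with $\frac{1}{2a}$ to give $\frac1a$.
\item[(iv)] Expand $y^{2m-l-k-2j}=(\sqrt{x}-c)^{2m-l-k-2j}$ binomially; this supplies $\binom{2m-l-k-2j}{i}(-c)^i$.
\end{itemize}
Collecting the power $x^{m-(i+2j+k+l)/2}$ gives exactly the stated $U_p$. Your recursion then becomes a consistency check, by uniqueness of asymptotic expansions.

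\textbf{Part 2: the printed formula is wrong, and your iteration is right.} There is nothing to reconcile. From $V_k=v_k+\frac{m-k+1}{a}V_{k-1}$ you get $V_k=\sum_{j=0}^k v_j\,(m-j)(m-j-1)\cdots(m-k+1)/a^{k-j}=\sum_{j=0}^k v_j\,(j-m)_{k-j}/(-a)^{k-j}$. Note that the product of the factors $m-k+1,\ldots,m-j$ is divided by $a^{k-j}$, not by $(-a)^{k-j}$ as you wrote.

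The stated $(k-m)_{k-j}$ starts the product in the wrong place and is false. Take $m=0$, $v_0=1$ and $v_j=0$ for $j\geq1$. Then $\int_x^\infty\mathrm{e}^{-at}\,\mathrm{d}t=\mathrm{e}^{-ax}/a$ exactly, so $V_1=0$, while the printed formula gives $V_1=(1)_1/(-a)=-1/a$. Part 1 at $b=0$ also yields $(j-m)_{k-j}$: only $k=i=0$ survive, with $u_{2j}=v_j$ and $u_{2j+1}=0$. So the two parts as printed are inconsistent. Your argument should conclude with the corrected formula $V_k=\sum_j v_j(j-m)_{k-j}/(-a)^{k-j}$.

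The misprint is inherited by $\delta_k$ in Theorem~\ref{thm3.5}, where $(k+1-\alpha_1)_{k-s}$ should be $(s+1-\alpha_1)_{k-s}$. For example, when $\lambda_1=0$ and $\alpha_1=1$, Proposition~\ref{prop1} shows the right tail of $X_1-X_2$ is exactly exponential. This forces $\delta_1=0$, not the stated $(\alpha_1-2)\beta_1+d_1=-\beta_1$.
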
	 

\begin{lemma}[Lemma 5.2 of \cite{gz25}]\label{lem5.2}
	Fix $a,A,z>0$ and $b,m\in\mathbb{R}$, and Let $g:(0,\infty)\rightarrow\mathbb{R}$ be such that $g(x)=O(x^{-1/2})$ as $x\rightarrow\infty$.  Then the equation
	\begin{equation}\label{l5.21}
		Ax^m\exp\big(-ax+b\sqrt{x}\big)(1+g(x))=z,
	\end{equation}
	has a unique solution $x_*$ for sufficiently small $z$, and this solution satisfies
	\begin{align}\label{l5.22}
		x_*&=\frac{1}{a}\bigg\{\ln(1/z)+\frac{b}{\sqrt{a}}\sqrt{\ln(1/z)}+m\ln(\ln(1/z))+\frac{b^2}{2a}+\ln\bigg(\frac{A}{a^m}\bigg)\nonumber\\
		&\quad+\frac{bm}{2\sqrt{a}}\frac{\ln(\ln(1/z))}{\sqrt{\ln(1/z)}}\bigg\}+O\bigg(\frac{1}{\sqrt{\ln(1/z)}}\bigg), \quad z\rightarrow0.
	\end{align}
	When $b=0$ and $g(x)=O(x^{-1})$ as $x\rightarrow\infty$, the error in  (\ref{l5.22}) is of order $O(1/\ln(1/z))$.
\end{lemma}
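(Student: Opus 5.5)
The plan is to take logarithms and reduce the equation to a quadratic in $u=\sqrt{x}$, which can be solved exactly, and then expand the resulting closed form in powers of $L=\ln(1/z)$.

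\textbf{Existence and uniqueness.} Write (\ref{l5.21}) as $F(x)=L$, where
\[F(x)=ax-b\sqrt{x}-m\ln x-\ln A-\ln(1+g(x)).\]
Since $g(x)=O(x^{-1/2})$, we have $\ln(1+g(x))=O(x^{-1/2})$, so $F(x)=ax\,(1+o(1))$ as $x\to\infty$. In particular $F(x)\to\infty$.
\begin{itemize}
\item For existence, assume (as holds in all our applications, where $g$ comes from a smooth tail function) that $g$ is continuous for large $x$. Then the intermediate value theorem gives a solution for every sufficiently large $L$.
\item For uniqueness, the plan is to show $F$ is eventually strictly increasing. The main part $ax-b\sqrt{x}-m\ln x$ has derivative $a+O(x^{-1/2})>0$. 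This requires a corresponding control on $g'$, which we get from the fact that $1+g$ is itself an asymptotic series in $x^{-1/2}$.
\end{itemize}
I expect this to be the main obstacle: the stated hypotheses on $g$ do not by themselves give monotonicity. What is really needed is the regularity of $g$ that holds in the applications, or else interpreting $x_*$ as any solution branch with $x_*\to\infty$ as $z\to0$.

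\textbf{Reduction to a quadratic.} Any such solution satisfies $x_*\sim L/a$, so $\ln x_*=\ln(L/a)+O(L^{-1/2})$ and $O(x_*^{-1/2})=O(L^{-1/2})$. With $u=\sqrt{x_*}$ the equation becomes
\[au^2-bu=L+R,\qquad R=m\ln L+\ln(A/a^m)+O(L^{-1/2}).\]
The positive root gives
\[x_*=u^2=\frac{b^2+2a(L+R)+b\sqrt{b^2+4a(L+R)}}{2a^2}.\]

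\textbf{Expansion in $L$.} Expand the square root:
\[\sqrt{4aL+4aR+b^2}=2\sqrt{aL}+\frac{\sqrt{a}\,R}{\sqrt{L}}+O\big(L^{-1/2}\big).\]
Here $R^2/L^{3/2}$ and $b^2/\sqrt{L}$ are both $O(L^{-1/2})$, the first up to logarithmic factors that are absorbed. Substituting the expression for $R$ gives
\[x_*=\frac{1}{a}\Big\{L+\frac{b}{\sqrt{a}}\sqrt{L}+m\ln L+\frac{b^2}{2a}+\ln\frac{A}{a^m}+\frac{bm}{2\sqrt{a}}\frac{\ln L}{\sqrt{L}}\Big\}+O\big(L^{-1/2}\big),\]
which is (\ref{l5.22}).

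\textbf{The case $b=0$.} Here there is no $\sqrt{L}$ term, so $x_*=(L+R)/a$ directly. If also $g(x)=O(x^{-1})$, then $\ln x_*=\ln(L/a)+O(\ln L/L)$ and $\ln(1+g)=O(1/L)$, so every error term is $O(\ln L/L)$.

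To reach the stated $O(1/L)$, I would iterate once more: substitute the refined $x_*$ into $m\ln x_*$ and use $\ln(1+\epsilon)=\epsilon+O(\epsilon^2)$. This shows the $\ln L/L$ contribution is $\frac{m^2\ln L}{aL}$, which is an explicit next-order term rather than part of the error. The step I would need to double-check is therefore whether the claimed $O(1/\ln(1/z))$ error implicitly absorbs this $\frac{m\ln\ln(1/z)}{\ln(1/z)}$-type term, or whether that term has to be listed separately.
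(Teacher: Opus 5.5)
The paper does not prove this lemma; it is quoted from the earlier work of Gaunt and Zhu, so your argument has to stand on its own. It does. The reduction to $au^2-bu=L+R$, with $u=\sqrt{x_*}$ and $L=\ln(1/z)$, followed by expansion of the exact root, reproduces every term of \eqref{l5.22} with an $O(L^{-1/2})$ remainder.

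One step needs tightening. The relation $x_*\sim L/a$ alone only gives $\ln x_*=\ln(L/a)+o(1)$. Feed it back into $ax_*=L+b\sqrt{x_*}+O(\ln x_*)$ to get $x_*=L/a+O(\sqrt{L})$. That is what actually justifies $\ln x_*=\ln(L/a)+O(L^{-1/2})$.

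Both doubts you raise are genuine defects of the statement, not of your proof.

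\textbf{Uniqueness.} It is false as stated, so drop the idea of obtaining it from control of $g'$. An asymptotic series for $1+g$ says nothing about $g'$, since asymptotic expansions cannot be differentiated in general. Already for $g\equiv0$ and $m>0$, the left-hand side of \eqref{l5.21} tends to $0$ at both ends of $(0,\infty)$, so there is a second root near $0$. An oscillating perturbation such as $g(x)=x^{-1/2}\sin(x^2)$ makes your $F$ non-monotone on every interval $[X,\infty)$, so large roots are not unique either.

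What you can prove, with no regularity on $g$, is all that is needed. Choose $x_0$ with $|g|\le 1/2$ on $[x_0,\infty)$. Then $F$ is bounded on each $[x_0,X]$, so every root in $[x_0,\infty)$ tends to $\infty$ as $z\to0$, and your expansion applies to all of them. Existence does need an extra assumption such as continuity, as you say. In Theorem \ref{thm3.7} neither issue arises: the equation is $\overline{F}_{X_1\mp X_2}(x)=1-p$ (or $F_{X_1-X_2}(x)=p$), and these functions are continuous and strictly monotone.

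\textbf{The case $b=0$.} Your iteration is correct, and the term cannot be absorbed. One has
\[
x_*=\frac{1}{a}\Big\{L+m\ln L+\ln\frac{A}{a^m}\Big\}+\frac{m^2}{a}\,\frac{\ln L}{L}+O\Big(\frac{1}{L}\Big),
\]
so the remainder is $O(\ln\ln(1/z)/\ln(1/z))$ unless $m=0$.

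As a concrete check, take $A=a=m=1$, $b=0$ and $g\equiv0$. Then $x_*=-W_{-1}(-z)=L+\ln L+\ln L/L+O((\ln L)^2/L^2)$, where $W_{-1}$ is the lower branch of the Lambert $W$ function.

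So the lemma's last sentence is correct only for $m=0$. Correspondingly, the error terms in (\ref{thm3.72}) and (\ref{thm3.74}) should read $O(\ln\ln(1/q)/\ln(1/q))$ and $O(\ln\ln(1/p)/\ln(1/p))$, unless $\alpha_1=1$ (respectively $\alpha_2=1$).
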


	\begin{proof}[Proof of Theorem \ref{thm3.5}] 
Since $\overline{F}_{X_1\mp X_2}(x)=\int_x^\infty f_{X_1\mp X_2}(t)\,\mathrm{d}t$, the asymptotic expansion (\ref{thm3.61}) follows from applying part 1 of Lemma \ref{lemma5.1} in combination with the asymptotic expansion (\ref{thm3.11}) for the PDF of $X_1\mp X_2$. Likewise, we can obtain the asymptotic expansion (\ref{thm3.52}) by applying part 2 of Lemma \ref{lemma5.1} together with the asymptotic expansion (\ref{thm3.12}). Since $F_{X_1-X_2}(x)=\int_{-\infty}^x f_{X_1-X_2}(t)\,\mathrm{d}t$, we can derive the asymptotic expansions (\ref{negtail}) and (\ref{thm3.54}) via a similar procedure, but this time making use of the asymptotic expansions (\ref{thm3.11aa}) and (\ref{thm3.14}) for the left-tail asymptotics of the PDF of $X_1-X_2$. 
		\end{proof} 

	\begin{proof}[Proof of Theorem \ref{thm3.7}] 1. Suppose $\lambda_1\neq0$. Since $\overline{F}_{X_1\mp X_2}(Q_{X_1\mp X_2}(p))=1-p$, it follows from the asymptotic expansion (\ref{thm3.61}) that $Q_{X_1\mp X_2}(p)$ solves an equation of the form (\ref{l5.21}) from Lemma \ref{lem5.2} with
		\begin{align*}
			A&=\frac{1}{2\sqrt{\pi}}\bigg(\frac{1}{\beta_2}\pm\frac{1}{\beta_1}\bigg)^{-\alpha_2}\beta_1^{1-\alpha_1}\beta_2^{-\alpha_2}\bigg(\frac{\lambda_1}{\beta_1}\bigg)^{(1-2\alpha_1)/4}\exp\bigg(-\lambda_1-\frac{\lambda_2\beta_2}{\beta_2\pm\beta_1}\bigg),\nonumber\\
			 a&=1/\beta_1, \quad b=2\sqrt{\lambda_1/\beta_1},\quad m=(2\alpha_1-3)/4, \quad z=1-p.
		\end{align*}
		We thus obtain the asymptotic approximation (\ref{thm3.71}) for the quantile function $Q_{X_1\mp X_2}(p)$ as $p\rightarrow1$ by substituting the above values for $A, a, b,m$ and $z$ into the asymptotic approximation (\ref{l5.22}).

        \vspace{3mm}
		
	\noindent	 2. Suppose now  that $\lambda_1=0$. Then arguing as in part 1 of the proof but this time making use of the asymptotic expansion (\ref{thm3.52}) for $\overline{F}_{X_1\mp X_2}(x)$, we see that the quantile function $Q_{X_1\mp X_2}(p)$ solves an equation of the form (\ref{l5.21}) with 
		\begin{align*}
			A&=\frac{\beta_1^{1-\alpha_1}\beta_2^{-\alpha_2}}{\Gamma(\alpha_1)}\bigg(\frac{1}{\beta_2}\pm\frac{1}{\beta_1}\bigg)^{-\alpha_2}\exp\bigg(-\frac{\lambda_2\beta_2}{\beta_2\pm \beta_1}\bigg),\nonumber\\
			a&=1/\beta_1,\quad b=0, \quad m=\alpha_1-1,\quad z=1-p,
		\end{align*}
        and on applying the approximation (\ref{l5.22}) with these values we obtain the asymptotic approximation (\ref{thm3.72}).

        \vspace{3mm}
		
	\noindent 	3 \& 4. This is similar to the proofs of parts 1 and 2, but this time we use that $Q_{X_1-X_2}(p)$ is a solution of the equation $F_{X_1-X_2}(Q_{X_1-X_2}(p))=p$ and apply the asymptotic expansions (\ref{negtail}) (when $\lambda_2\not=0$) and (\ref{thm3.54}) (when $\lambda_2=0$).
		\end{proof}

\begin{proof}[Proof of Corollary \ref{corpn2}] 1. Suppose that $r_X+r_Y\not=0$. We obtain the asymptotic expansion (\ref{33a}) with simplified coefficients (\ref{33b}) by setting $n=1$ in the asymptotic expansion (\ref{expansion11}) and simplifying as follows. Firstly, we express the Laguerre polynomials in terms of the physicist's Hermite polynomials via the relation
\begin{align}\label{lhlh}
L_k^{(-1/2)}(x)=\frac{(-1)^k}{4^{k}\,k!}H_{2k}\big(\sqrt{x}\big),    
\end{align}
which follows from combining equations 18.11.2 and 13.6.16
of \cite{olver} and using the basic formula $(1/2)_k=(2k!)/(4^k\,k!)$. 
We then simplify the Pochammer symbols by using the basic formulas
\begin{align*}
(j+1)_{l-j}=\frac{l!}{j!}; \quad (-j)_{l-j}=(-1)^{l-j}\frac{j!}{(2j-l)!}, \: l\leq 2j; \quad (-j)_{l-j}=0, \: l>2j.    
\end{align*}
The latter relation $(-j)_{l-j}=0$ for $l>2j$ implies that the summands with $j< \lceil l/2\rceil$ vanish from the series, leaving just those for which $ \lceil l/2\rceil\leq j\leq l$. After a simple calculation that employs these considerations, we arrive at the desired asymptotic expansion (\ref{33a}).

\vspace{3mm}

\noindent 2. Suppose that $r_X+r_Y=0$. In this case, we obtain the desired simplification for the coefficients $d_k(\rho_X,\rho,1)$, $k\geq1$, by again using the relation (\ref{lhlh}) as well as the formula $(1/2)_k=(2k!)/(4^k\,k!)$.
\end{proof}          

\section*{Acknowledgements} Both authors are funded by EPSRC grant EP/Y008650/1.

\end{document}